\def\FF{\mathbb{F}}
\DeclareMathAlphabet{\curly}{U}{rsfs}{m}{n}
\newtheorem{thm}{Theorem}
\newtheorem{cor}[thm]{Corollary}
\newtheorem{lem}[thm]{Lemma}
\theoremstyle{remark}
\newtheorem{remark}{Remark}
\def\cA{{\mathcal A}}
\def\cB{{\mathcal B}}
\def\cF{{\mathcal F}}
\def\cG{{\mathcal G}}
\def\cH{{\mathcal H}}
\def\cK{{\mathcal K}}
\def\cL{{\mathcal L}}
\def\cP{{\mathcal P}}
\def\cQ{{\mathcal Q}}
\def\cR{{\mathcal R}}
\def\cU{{\mathcal U}}
\def\cX{{\mathcal X}}
\def\cZ{{\mathcal Z}}
\def\fB{{\mathfrak B}}
\def\fL{{\mathfrak L}}
\newcommand{\ZZ}{{\mathbb Z}}
\newcommand{\QQ}{{\mathbb Q}}
\newcommand{\RR}{{\mathbb R}}
\newcommand{\NN}{{\mathbb N}}
\newcommand{\Fp}{{\mathbb F_p}}
\newcommand{\ab}{{\mathbf a}}
\newcommand{\bb}{{\mathbf b}}
\newcommand{\tu}{{\tilde u}}
\newcommand{\tPh}{{\tilde\Phi}}
\renewcommand{\vec}[1]{\mathbf{#1}}
\newcommand{\eps}{\ensuremath{\varepsilon}}
\newcommand{\fl}[1]{{\ensuremath{\left\lfloor {#1} \right\rfloor}}}
\newcommand{\conv}{\text{conv}\,}
\newcommand{\be}{\begin{equation}}
\newcommand{\ee}{\end{equation}}
\newcommand{\benn}{\begin{equation*}}   
\newcommand{\eenn}{\end{equation*}}
\renewcommand{\AA}{\curly A}
\renewcommand{\(}{\left(}
\renewcommand{\)}{\right)}
\def\fl#1{\left\lfloor#1\right\rfloor}
\def\rf#1{\left\lceil#1\right\rceil}
\def\mand{\qquad\mbox{and}\qquad}
\def\({\left(}
\def\){\right)}
\numberwithin{equation}{section}
\title[Elements of  Cosets of Small Subgroups]
{Distribution of Elements of  Cosets of Small Subgroups
and Applications}
\author[J. Bourgain]{Jean Bourgain}\address{School of Mathematics,
Institute for Advanced Study,
Princeton, NJ 08540, USA} \email{bourgain@math.ias.edu}
\author[S.~V. Konyagin]{Sergei Konyagin}\address{Steklov Mathematical
Institute,
8, Gubkin Street, Moscow, 119991, Russia}
\email{konyagin@mi.ras.ru}
\author[I.~E. Shparlinski]{Igor Shparlinski}
\address{Department of
Computing,
Macquarie University, North Ryde,
Sydney, NSW 2109, Australia}
\email{igor.shparlinski@mq.edu.au}
\date{\today}
\subjclass[2000]{Primary 11A15; Secondary 11L07, 11N25}
\keywords{congruence, multiplicative group of residues,
smooth number, discrete logarithm}
\begin{document}

\begin{abstract}
We obtain a series of estimates on the number of small integers
and small order Farey fractions which belong to a given
coset of a subgroup of order $t$ of the group of
units of the residue ring modulo a prime $p$, in the case when $t$ is
small compared to $p$. We give two applications of these results:
to the simultaneous distribution of two high degree monomials
$x^{k_1}$ and $x^{k_2}$ modulo $p$ and to a question of
J.~Holden and P.~Moree on fixed points of the discrete logarithm.
\end{abstract}

\maketitle

%
\section{Introduction}
%

\subsection{Estimates for the number of elements of small height
in a coset of a small subgroup}

We fix a prime number $p>2$. By $\Fp$ we denote the field of residues modulo
$p$. For any element $x\in\Fp$ we define its integer height
$$|x|=\min\{|a|~:~a\in\ZZ, \ a\equiv x \pmod p \}$$
and its rational height
$$\|x\|=\min\{\max(|a|,b)~:~a\in\ZZ,b\in\NN,a\equiv bx \pmod p\}.$$
Note that by pigeonhole principle,
$$|x|\le p/2,\quad\|x\|\le\sqrt p.$$
Moreover, if $\|x\|\le\sqrt{p/2}$ then the numbers $a\in\ZZ$, $b\in\NN$
with $|a|\le\|x\|,b\le\|x\|$, $a\equiv bx \pmod p$ are uniquely defined.
Also, the rational height is defined for a rational number $x$ as
$$\|x\|=\min\{\max(|a|,b)~:~a\in\ZZ,b\in\NN,x=a/b\}.$$

As usual, we use $\FF_p^*=\FF_p\setminus\{0\}$ to denote the multiplicative group 
of $\FF_p$. 

If $t\mid p-1$ then there is a unique multiplicative subgroup
$G\subseteq\FF_p^*$. For $a\in\FF_p$ we denote
$$aG=\{ag~:~g\in G\}.$$
Our aim is to estimate the cardinality of the sets
$$U(k,t,a)=\{x~:~x\in aG, |x|\le k\},$$
$$V(k,t,a)=\{x~:~x\in aG, \|x\|\le k\}.$$

These quantities are important for estimates of exponential sums
in $\Fp$ and for analysis of distribution of cosets of $G$ in
$\Fp$. Estimates for $\#U(k,t,a)$ and for $\#V(k,t,a)$
have been obtained in~\cite{KS} and~\cite{BKS} where the case
of ``large" $t$ and $k$, that is, for $\log k\asymp\log t\asymp\log p$
(where $A\asymp B$
means that $A = O(B)$ and $B = O(A)$) has been  studied.
In this paper our main interest is related to the case of small $G$
($\log t=o(\log p)$; in particular, $\log t\asymp\log\log p$).
It is proved in~\cite{Bo2} that in a very general situation
with rather small $t$ and rather large $k$ we have
$\#U(k,t,a)=o(t)$. For smaller $k$ (say, $k\le p^{0.1}$)
the problem is easier. In this paper we prove some explicit estimates
of $\#U(k,t,a)$ and for $\#V(k,t,a)$ for such $k$ and small $G$
and apply these results to the problem of simultaneous distribution of
two powers in $\Fp$ (see~\cite{BBK}). If both parameters $t$ and $k$
are very small we establish some upper bounds for $\#U(k,t,a)$ and for
$\#V(k,t,a)$, usually much better than  the trivial
estimates
$$
\#U(k,t,a) \le \min\{2k, t\} \mand
\#V(k,t,a)  \le \min\{2k^2, t\}.
$$
These results are applied
to estimation of fixed points of the discrete logarithms.

We also noted that in the case when $k$ and $t$ are of about the
same size one can estimate $\#U(k,t,a)$ by using the results 
and techniques of~\cite{ChaShp}, based on~\cite[Theorem~1.1]{BouGar} 
that gives an explicit version of the 
sum-product theorem and of~\cite{CilGar} which is based on 
estimates of~\cite{CilJimUr} on the number of divisors in a 
short interval of an integer $n$


To formulate our results, we need some notation.

Let $x, y>0$. A positive integer $n$ is called $y$-smooth
if it is composed of prime numbers  up to $y$. The $\Psi(x,y)$
function is defined as the number of $y$-smooth positive integers
that are up to $x$.

As usual, we use $(a,b)$ to denote the greatest common divisor of
integers $a$ and $b$ (with $a^2 + b^2 > 0$).

Finally, we also use  $p_k$  to denote the $k$th prime.

We fix a prime number $p>2$. For any positive integer
$1<k<p/2$ we define  the quantity
\begin{equation}
\label{eq:r0}
r_0(k)=\fl{\frac{\log(p/2)}{\log k}}
\end{equation}
Furthermore,  let $t\in\NN$ be another parameter, then we define
\begin{equation}
\label{eq:s0}
s_0(k,t)=\max\left\{s~:~\binom{r_0(k)+s}{s}\le t\right\}.
\end{equation}

We observe that $r_0(k)$ decreases and $s_0(k,t)$ increases as $k$
and $t$ increase.

\begin{thm}\label{intheight1} For any $a\in\FF_p^*$ we have
$$\# \{|x|~:~x\in U(k,t,a)\}\le\Psi(k,p_{s+1})$$
where $s = s_0(k,t)$.
\end{thm}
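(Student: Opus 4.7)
The plan is to prove that each height in $H := \{|x| : x \in U(k,t,a)\}$ is determined by its $p_{s+1}$-smooth part, so that $H$ injects into the set of $p_{s+1}$-smooth positive integers in $[1,k]$, whose cardinality is $\Psi(k,p_{s+1})$.

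\emph{Setup.} First I would observe that $H \subseteq \{1,2,\dots,k\}$, and that since every $x\in U(k,t,a)$ lies in $aG$ while heights ignore signs, any two $m_1,m_2\in H$ satisfy $m_1/m_2 \in G':=G\cup(-G)$ in $\FF_p^*$, a subgroup of order at most $2t$. It is convenient to split $H = H^+\cup H^-$ according to whether $m$ or $-m$ represents the element of $aG$, so that inside each of $H^{\pm}$ all pairwise ratios lie in $G$ itself.

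\emph{The map.} For each $m\in H$ I would write $m = u(m)w(m)$, where $u(m)=\prod_{i=1}^{s+1} p_i^{v_{p_i}(m)}$ is the $p_{s+1}$-smooth part and $w(m)$ has all prime factors $\ge p_{s+2}$. Since $u(m)\le m\le k$ and is $p_{s+1}$-smooth, the map $m\mapsto u(m)$ lands in a set of cardinality $\Psi(k,p_{s+1})$.

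\emph{Main step --- injectivity.} Suppose for contradiction that $m_1\neq m_2$ in $H$ satisfy $u(m_1)=u(m_2)=u$, so that $m_i = u w_i$ with $w_1\neq w_2$ distinct $p_{s+2}$-rough integers $\le k/u$ and $w_1/w_2\in G'$ in $\FF_p^*$. The defining inequality $\binom{r_0+s+1}{s+1}>t$ (which holds because $s=s_0(k,t)$) should be used as follows: form the products
\[
p_1^{a_1}\cdots p_{s+1}^{a_{s+1}}\cdot w_1^{b_1}w_2^{b_2},\qquad a_i,b_j\ge 0,
\]
with the total degree constrained so that each product is at most $k^{r_0}\le p/2$, hence represents a distinct positive integer (by unique factorization, since the small primes and the rough $w_i$ share no prime factors). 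Using $w_1/w_2\in G'$ one can collapse the $(b_1,b_2)$-dependence modulo $G'$, producing at least $\binom{r_0+s+1}{s+1}$ distinct elements of $[1,p/2]$ all lying in a single coset of $G$ (working inside $H^+$ or $H^-$ to avoid the factor of two). This exceeds $|G|=t$ and yields the desired contradiction, forcing $w_1=w_2$.

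\emph{Main obstacle.} The delicate point, and the step I expect to be hardest, is arranging the combinatorial construction in the injectivity step so that exactly $\binom{r_0+s+1}{s+1}$ products land in the \emph{same} coset of $G$ (not spread across many cosets), while simultaneously guaranteeing their distinctness as integers. This requires carefully exploiting the multiplicative independence of $p_1,\dots,p_{s+1}$ together with the single fixed relation $w_1/w_2\in G'$, and handling the sign ambiguity between $G$ and $G'$ by restricting attention to one of $H^{\pm}$ at a time.
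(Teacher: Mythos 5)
Your key claim --- that every height $m\in H$ is determined by its smooth part with respect to the \emph{first} $s+1$ primes --- is both unproven and, in general, false; this is where the proposal breaks down. In your injectivity step the only multiplicative relation tied to the subgroup is the single relation $w_1/w_2\in G\cup(-G)$. Powers of $p_1,\dots,p_{s+1}$ carry no information about $G$ whatsoever: the residues $p_1^{a_1}\cdots p_{s+1}^{a_{s+1}}$ are not confined to any prescribed coset, so there is no mechanism to make $\binom{r_0+s+1}{s+1}$ of your products land in one coset of $G$. The most your single relation yields is the one-parameter family $w_1^{j}w_2^{r_0-j}$, $0\le j\le r_0$, giving $r_0+1$ distinct residues in one coset of $G\cup(-G)$ and hence only $r_0+1\le 2t$ --- no contradiction, since already $t\ge\binom{r_0+s}{s}\ge r_0+1$ whenever $s\ge1$. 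So the "main obstacle" you flag is not a technical difficulty but the fatal gap. Indeed the intended injectivity can fail outright: nothing prevents $U(k,t,a)$ from containing two elements whose heights are distinct primes $q,q'>p_{s+1}$ (their ratio lying in $G\cup(-G)$ only forces $t\gg \log p/\log k\approx r_0$, which is compatible with $s\ge1$); both then have trivial $p_{s+1}$-smooth part, so your map $m\mapsto u(m)$ is not injective, even though the theorem's bound is comfortably true.

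The paper's proof keeps the coset-counting idea but applies it to elements of the coset itself, and replaces your "smooth part" map by a linear-algebra step. One takes a \emph{maximal} multiplicatively independent set $\{x_0,\dots,x_\ell\}\subseteq U(k,t,a)$; the monomials $x_0^{u_0}\cdots x_\ell^{u_\ell}$ of fixed total degree $r_0$ are distinct integers of absolute value $<p/2$ lying in a single coset of $G$, whence $\binom{r_0+\ell}{\ell}\le t$ and so $\ell\le s$. Then, since the exponent matrix of $|x_0|,\dots,|x_\ell|$ has full rank $\ell+1$ and every $x\in U(k,t,a)$ has exponent vector in its row space, one selects a nonsingular $(\ell+1)\times(\ell+1)$ submatrix whose columns correspond to primes $q_1<\dots<q_{\ell+1}$ (chosen to fit the coset, \emph{not} the first primes); the exponents $\beta_1,\dots,\beta_{\ell+1}$ of $x$ at these primes determine $|x|$, and $|x|\mapsto\prod_j p_j^{\beta_j}\le\prod_j q_j^{\beta_j}\le|x|\le k$ injects the heights into $p_{\ell+1}$-smooth numbers up to $k$, giving $\Psi(k,p_{s+1})$. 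If you want to salvage your write-up, replace the fixed primes $p_1,\dots,p_{s+1}$ by such a rank-selected set of primes; the "same coset" counting should be done only with products of the $x_i$ themselves.
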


\begin{thm}\label{intheight2} Let $a\in\FF_p^*$ and $x_0\in U(k,t,a)$
$$\# \{|x|~:~x\in U(k,t,a), \  (x,x_0)=1\}\le\Psi(k,p_{s})$$
where $s = s_0(k,t)$.
\end{thm}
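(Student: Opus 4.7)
The plan is to re-run the product-and-pigeonhole argument underlying Theorem~\ref{intheight1} on the augmented set $N'\cup\{n_0\}$, where $n_0=|x_0|$ and $N'=\{|x|:x\in U(k,t,a),\ (x,x_0)=1\}$. The coprimality hypothesis supplies one guaranteed multiplicatively independent direction coming from $n_0$, and this extra independent direction is exactly what sharpens the smoothness bound from $\Psi(k,p_{s+1})$ down to $\Psi(k,p_s)$.

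For each $n\in N'\cup\{n_0\}$ I would pick a signed lift $\varepsilon_n n\in aG$ with $\varepsilon_n\in\{\pm 1\}$. Any product of $r_0=r_0(k)$ such lifts lies in $a^{r_0}G$ modulo $p$ and, as an integer, has absolute value at most $k^{r_0}\le p/2$, so the residue determines the integer value and at most $t$ distinct integer products can occur. If $N'\cup\{n_0\}$ contained $s+2$ multiplicatively independent elements, the $\binom{r_0+s+1}{s+1}$ multisets of size $r_0$ drawn from those would give that many pairwise distinct integer products, exceeding $t$ by the definition of $s_0(k,t)$; contradiction. Hence the multiplicative rank of $N'\cup\{n_0\}$ in $\QQ^*$ is at most $s+1$.

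The new ingredient enters here. Because $(n,n_0)=1$ for every $n\in N'$, the prime support of $n_0$ is disjoint from the prime supports of the elements of $N'$, so their valuation vectors lie in complementary coordinate subspaces of $\bigoplus_q\ZZ$. Multiplicative rank is therefore additive across the union, and (assuming $n_0\ge 2$) $\{n_0\}$ contributes rank~$1$, leaving $\mathrm{rank}(N')\le s$. A set of positive integers in $[1,k]$ of multiplicative rank at most $s$ is contained in a rank-$s$ subgroup of $\QQ^*$, and among such subgroups the one maximising the number of positive-integer elements $\le k$ is the one generated by the smallest $s$ primes, yielding $|N'|\le\Psi(k,p_s)$.

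The main obstacle will be justifying this final extremality claim, namely that among rank-$s$ subgroups of $\QQ^*$ the $p_s$-smooth integers supply the densest count of integer elements in $[1,k]$. This is plausible by a comparison of lattice points in a logarithmic simplex, but it needs a careful argument. A minor secondary subtlety is the degenerate case $n_0=1$, in which the rank-splitting step collapses and which must either be excluded from the statement or argued directly.
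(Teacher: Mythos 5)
Your treatment of the part that is genuinely specific to this theorem is correct and is essentially the paper's own argument: adjoin $x_0$, observe that coprimality makes its prime support disjoint from that of every counted element, bound the number of multiplicatively independent elements of the enlarged set by $s+1$ via the pigeonhole on degree-$r_0$ products (all such products lie in the single coset $a^{r_0}G$ and are integers of absolute value at most $k^{r_0}\le p/2$, hence distinct integers stay distinct as residues), and conclude that the coprime part has multiplicative rank $\ell\le s$. The paper phrases the same step slightly differently: it takes a maximal independent set $x_1,\dots,x_\ell$ among the elements coprime to $x_0$, recovers $u_0$ in a product $x_0^{u_0}x_1^{u_1}\cdots x_\ell^{u_\ell}$ as the exact power of $|x_0|$ dividing it, and gets $\binom{r_0+\ell}{\ell}\le t$, i.e.\ $\ell\le s$.

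The gap is the step you yourself flag: passing from ``rank at most $s$'' to ``cardinality at most $\Psi(k,p_s)$''. Your extremality claim about rank-$s$ subgroups of $\QQ^*$ is true, but it is not an off-the-shelf fact, and it does not follow from a count of lattice points in a logarithmic simplex: the integers of a rank-$s$ subgroup lying in $[1,k]$ are not parametrised by such a simplex in any evident way, and in fact the natural proof of that extremality statement is exactly the injection your plan is missing, so as written the argument is circular at its decisive point. The paper closes it (this is the ``rest of the proof as in Theorem~\ref{intheight1}'') as follows: form the exponent matrix $A=(\alpha_{i,j})$ of the factorisations $|x_i|=\prod_j p_j^{\alpha_{i,j}}$, $1\le i\le\ell$; it has rank $\ell$, so one can choose primes $q_1<\dots<q_\ell$ indexing a nonsingular $\ell\times\ell$ submatrix. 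For any counted $x$ the exponent vector of $|x|$ lies in the row space of $A$ (maximality of the independent set), hence is determined by its restriction to the coordinates $q_1,\dots,q_\ell$; thus $|x|$ is determined by the exponents $\beta_1,\dots,\beta_\ell$ of $q_1,\dots,q_\ell$ in $|x|$. Since $\beta_j\ge0$ and $q_j\ge p_j$, the map sending $|x|$ to $\prod_j p_j^{\beta_j}$ is an injection into the $p_\ell$-smooth integers of size at most $\prod_j q_j^{\beta_j}\le|x|\le k$, giving $\Psi(k,p_\ell)\le\Psi(k,p_s)$ and, in passing, your extremality claim. Your secondary worry about $|x_0|=1$ is minor (the paper glosses over it as well): there the coprimality condition is vacuous, $x_0^{u_0}$ contributes only a sign, and padding the products $x_1^{u_1}\cdots x_\ell^{u_\ell}$ with $u_1+\dots+u_\ell\le r_0$ by the appropriate power of $x_0$ keeps all $\binom{r_0+\ell}{\ell}$ resulting integers in the one coset $a^{r_0}G$, so the same conclusion $\ell\le s$ and the same injection still yield the bound $\Psi(k,p_s)$.
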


If the coset is $G$ itself then we can take $x_0=1$. Thus, we have the following
estimate for the number of small elements in a subgroup.

\begin{cor}\label{subgr1} We have
$$\# \{|x|~:~x\in U(k,t,1)\}\le\Psi(k,p_{s})$$
where $s = s_0(k,t)$.
\end{cor}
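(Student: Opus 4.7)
The plan is to derive the corollary as an immediate specialization of Theorem~\ref{intheight2}. When $a=1$, the coset $aG$ equals $G$ itself, and the multiplicative identity $1\in G$ certainly satisfies $|1|=1\le k$, so $1\in U(k,t,1)$. Thus $x_0=1$ is an admissible choice in the hypothesis of Theorem~\ref{intheight2}, as the paragraph preceding the corollary already points out.

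Having fixed $x_0=1$, the coprimality condition $(x,x_0)=(x,1)=1$ holds for every integer $x$, so it imposes no restriction on the set appearing in Theorem~\ref{intheight2}. Hence
$$\{|x|~:~x\in U(k,t,1),\ (x,x_0)=1\}=\{|x|~:~x\in U(k,t,1)\},$$
and the upper bound $\Psi(k,p_s)$ supplied by Theorem~\ref{intheight2} for the left-hand set transfers verbatim to the right-hand set, which is exactly the quantity we wish to estimate.

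There is essentially no obstacle to overcome: the corollary is a direct specialization of the preceding theorem, with all of the substantive content already packaged into the proof of Theorem~\ref{intheight2}. The only point worth emphasizing is the observation that $1$ always belongs to $G$ and is coprime to every integer, which makes the auxiliary element $x_0$ in Theorem~\ref{intheight2} trivially available and its coprimality condition vacuous when one specializes to the subgroup itself.
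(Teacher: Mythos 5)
Your specialization of Theorem~\ref{intheight2} with $x_0=1$ (noting $1\in G$, $|1|\le k$, and that the coprimality condition is vacuous) is correct and is exactly how the paper itself obtains the corollary. Nothing is missing.
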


\begin{remark} If we are interested in counting 
the number of $x$ such that $x\in U(k,t,a)$
and $1\le x\le k$ then sometimes it is possible to  estimate
this number slightly better than in
Theorem~\ref{intheight1} by replacing $r_0(k)$ in the definition of $s_0(k,t)$
with
$$\widetilde{r_0}(k) =\fl{\frac{\log p}{\log k}}.$$
A similar improvement can be made for Theorem~\ref{intheight2} as well.
\end{remark}

To study elements of small rational height in cosets of subgroups,
we also define
$$r_1(k)=\fl{r_0(k)/2}=\fl{\frac{\log(p/2)}{2\log k}}.$$
If $r_1(k)\ge1$ (that is, $k\le(p/2)^{1/2}$) we also define
$$s_1(k,t)=\max\left\{s~:~\binom{r_1(k)+s}{s}\le t\right\}.$$
Next, we denote for $s\in\NN$
$$\tPh(k,s)=\sum_{i=0}^{s}\binom{s}{i}\Psi(k,p_i)\Psi(k,p_{s-i}).$$
Also, we consider that $\tPh(k,0)=1$.

\begin{thm}\label{ratheight1} For any $a\in\FF_p^*$ we have
$$\# \{|x|~:~x\in V(k,t,a)\}\le\tPh(k,s+1)$$
where $s = s_1(k,t)$.
\end{thm}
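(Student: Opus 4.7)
My plan is to imitate the proof of Theorem~\ref{intheight1}, adapted to rational heights. The key difference is the uniqueness range: a residue class of integer height at most $p/2$ has a unique integer representative, whereas for rational height the corresponding threshold is $\sqrt{p/2}$ (as recorded at the start of the paper). This halving is precisely what forces $r_1(k)=\fl{r_0(k)/2}$ to replace $r_0(k)$ and hence $s_1(k,t)$ to replace $s_0(k,t)$.

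First I would assign to each $x\in V(k,t,a)$ a coprime representation $x\equiv\alpha_x/\beta_x\pmod p$ with $\beta_x\ge 1$ and $\max(|\alpha_x|,\beta_x)\le k$. Setting $r=r_1(k)$, any product of $r$ such elements has numerator $A=\prod\alpha_{x_i}$ and denominator $B=\prod\beta_{x_i}$ satisfying $\max(|A|,B)\le k^r\le\sqrt{p/2}$. By the uniqueness observation in the introduction, the product $x_1\cdots x_r\in\FF_p^*$ then determines the coprime reduction of the pair $(A,B)$. Since these products also lie in $a^rG$, there are at most $t$ possibilities for this reduced pair.

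The combinatorial crux then proceeds in parallel with Theorem~\ref{intheight1}: one considers multiset selections of size $s+1$ appended to a fixed base product, and uses the maximality of $s=s_1(k,t)$, namely $\binom{r_1(k)+s+1}{s+1}>t$, to force two distinct multisets to yield the same reduced coprime pair. Such a collision gives an identity $\prod\alpha_{x_i}/\prod\beta_{x_i}=\prod\alpha_{x_j'}/\prod\beta_{x_j'}$, so the prime-exponent discrepancies must distribute themselves between the numerator and denominator of the ``mismatch''. Allowing $i$ of the $s+1$ offending primes $p_1,\dots,p_{s+1}$ to go to the numerator (contributing a $p_i$-smooth integer $\le k$) and $s+1-i$ to the denominator (contributing a $p_{s+1-i}$-smooth integer $\le k$) reproduces exactly the weighted convolution in the definition of $\tPh(k,s+1)$.

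I expect the main obstacle to be the final step. Because rational collisions permit primes to migrate from one side of the fraction to the other and partially cancel, the clean ``each prime goes to one side'' picture used for integer heights in Theorem~\ref{intheight1} becomes subtler here. Controlling the cancellation — so that the binomial weight $\binom{s+1}{i}$ arises as the count of ways to split the $s+1$ critical primes between numerator and denominator, and so each configuration is counted exactly once in $\tPh(k,s+1)$ — is the genuine new content of the argument over its integer prototype.
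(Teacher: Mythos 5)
There is a genuine gap. Your pigeonhole/collision step, once the bookkeeping is fixed, only proves the analogue of \eqref{ests}: a maximal multiplicatively independent subset $\{x_0,\dots,x_\ell\}\subseteq V(k,t,a)$ has $\ell\le s=s_1(k,t)$ (and your identification of the $\sqrt{p/2}$ threshold and of $r_1(k)$, $s_1(k,t)$ is correct; note, though, that the products should have total degree $r_1(k)$ drawn from $s+2$ candidate elements, giving $\binom{r_1(k)+s+1}{s+1}>t$, not ``multisets of size $s+1$ appended to a base product''). But bounding the size of an independent subset does not by itself bound $\#V(k,t,a)$. The missing ingredient is the linear-algebra reduction from the proof of Theorem~\ref{intheight1}: form the matrix of prime exponents of $x_0,\dots,x_\ell$, use that it has full rank $\ell+1$ to extract a nonsingular $(\ell+1)\times(\ell+1)$ submatrix whose columns correspond to primes $q_1<\dots<q_{\ell+1}$, and observe that, since the exponent vector of every $x\in V(k,t,a)$ is a rational linear combination of the rows (by maximality of the independent set), the vector of exponents of $q_1,\dots,q_{\ell+1}$ in $x$ determines $x$. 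In your sketch the ``$s+1$ offending primes $p_1,\dots,p_{s+1}$'' appear out of nowhere: the primes occurring in a collision identity between two products are arbitrary primes up to $k$, not a fixed set of $s+1$ of them, so no bound of the shape $\tPh(k,s+1)$ can be extracted from the collision alone.

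Once that reduction is in place, the step you flag as the ``main obstacle'' is in fact painless and involves no cancellation to control: map each $x\in V(k,t,a)$ to $m=\prod_{j=1}^{\ell+1}q_j^{\beta_j}$, where $\beta_j$ is the exponent of $q_j$ in the reduced fraction representing $x$; this map is injective by the rank argument, and each $q_j$ sits on exactly one side of the fraction, so with $J_+=\{j:\beta_j\ge0\}$ and $J_-=\{j:\beta_j<0\}$ one has $m=m_+/m_-$ with $m_+\le k$ and $m_-\le k$ (they divide the numerator and denominator of $x$). For $\#J_+=i$ there are $\binom{\ell+1}{i}$ choices of $J_+$, and, replacing the chosen primes by the smallest ones exactly as in Theorem~\ref{intheight1}, at most $\Psi(k,p_i)$ values of $m_+$ and at most $\Psi(k,p_{\ell+1-i})$ values of $m_-$; summing over $i$ gives $\tPh(k,\ell+1)\le\tPh(k,s+1)$. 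The ``migration of primes between numerator and denominator'' that worries you arises only because you try to read the count off a collision identity between two products of elements of $V(k,t,a)$, rather than off the canonical factorization of each single element restricted to the selected primes $q_1,\dots,q_{\ell+1}$; the latter is what the intended argument (and the paper's proof) actually uses.
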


\begin{thm}\label{ratheight2} We have
$$\# \{|x|~:~x\in V(k,t,1)\}\le\tPh(k,s)$$
where $s = s_1(k,t)$.
\end{thm}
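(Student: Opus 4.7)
The plan is to mimic the argument of Theorem~\ref{ratheight1}, with a one-unit sharpening coming from the fact that $1\in G$ (just as Corollary~\ref{subgr1} sharpens Theorem~\ref{intheight1}). After reducing each representation to lowest terms, fix one coprime pair $(a_i,b_i)$ with $|a_i|,b_i\le k$ per distinct rational height, $i=1,\dots,S$, and adjoin the ``free'' element $x_0=1\in G$ represented by the pair $(1,1)$.

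First I would run the greedy prime-selection used in the proof of Theorem~\ref{ratheight1}. At step $l\ge1$, let $q_l$ be the largest prime dividing some $a_ib_i$ among the not-yet-removed pairs, choose a witness $x_{i_l}$ containing $q_l$, and delete every pair whose $a$ or $b$ is divisible by $q_l$; iterate until no pair remains. This produces a decreasing sequence $q_1>q_2>\cdots>q_m$ of primes and selected elements $x_{i_1},\dots,x_{i_m}$ with the crucial property that $q_l\nmid a_{i_j}b_{i_j}$ for every $j>l$. I would then form the family of products
\[
x_0^{j_0}x_{i_1}^{j_1}\cdots x_{i_m}^{j_m},\qquad j_0+j_1+\cdots+j_m=r_1(k).
\]
Each product lies in $G$, and, written as a rational $A/B$, satisfies $|A|,B\le k^{r_1}\le\sqrt{p/2}$, so by the uniqueness remark following the definition of $\|\cdot\|$, distinct rationals give distinct residues modulo $p$. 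Reading the exponent of $q_l$ in the rational product pins down $j_l$ once $j_1,\dots,j_{l-1}$ are known (here $x_0=1$ has no primes, and $x_{i_j}$ for $j>l$ has no $q_l$); then $j_0=r_1-\sum_{l\ge1}j_l$. Hence the $\binom{r_1+m}{m}$ tuples produce distinct elements of $G$, whence $\binom{r_1+m}{m}\le t$, and the definition of $s=s_1(k,t)$ forces $m\le s$.

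The last step converts this into the claimed count $\tPh(k,s)$. Every pair $(a_i,b_i)$ is deleted at some greedy step, so its prime support is controlled by the $m\le s$ selected primes. Partitioning those primes between the coprime numerator $a_i$ and denominator $b_i$, and using that a prescribed set of $i$ primes admits at most $\Psi(k,p_i)$ multiplicative combinations bounded by $k$, one obtains
\[
S\le\sum_{i=0}^{s}\binom{s}{i}\Psi(k,p_i)\Psi(k,p_{s-i})=\tPh(k,s).
\]
The main difficulty is justifying that the exponent-of-$q_l$ analysis really isolates $j_l$: this hinges on the greedy's guarantee that $q_l$ is absent from every later $x_{i_j}$ with $j>l$, while smaller primes occurring in $x_{i_l}$ that may also occur in earlier $x_{i_j}$ cause no issue, since the corresponding $j_{j'}$ with $j'<l$ are already determined by the time we examine $q_l$.
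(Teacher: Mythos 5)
Your first half is sound and uses the same sharpening idea as the paper: adjoining the free element $x_0=1$ (equivalently, allowing exponent sums $\le r_1$ rather than $=r_1$) keeps all products inside $G$ itself, and your triangular greedy structure does make the witnesses $x_{i_1},\dots,x_{i_m}$ multiplicatively independent and the map from exponent tuples to products injective, so $\binom{r_1+m}{m}\le t$ and $m\le s$ follow correctly. The genuine gap is in the final counting step. The greedy deletion only certifies that a pair removed at step $l$ is divisible by $q_l$ and has all prime factors $\le q_l$; it does \emph{not} imply that its prime support lies in $\{q_1,\dots,q_m\}$, nor that reading its exponents at the selected primes determines it. Concretely, take surviving numerators $10=2\cdot5$, $20=2^2\cdot 5$, $6=2\cdot3$ (denominators $1$): the greedy selects $q_1=5$ with witness $10$ (deleting both $10$ and $20$) and $q_2=3$ with witness $6$; the prime $2$ is never selected, and $10$ and $20$ have identical exponent vectors at $(q_1,q_2)$. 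So the assertion ``its prime support is controlled by the $m\le s$ selected primes'', on which your bound $S\le\sum_{i}\binom{s}{i}\Psi(k,p_i)\Psi(k,p_{s-i})=\tPh(k,s)$ rests, fails: your witnesses need not form a \emph{maximal} multiplicatively independent subset of $V(k,t,1)$, so a general element of $V(k,t,1)$ need not lie in their multiplicative span, and the count of selected-prime products does not majorize $S$.

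The paper closes exactly this hole differently: it takes a \emph{maximal} multiplicatively independent set $\{x_1,\dots,x_\ell\}\subseteq V(k,t,1)$, proves $\ell\le s$ by the same product trick you use (with sums $\le r_1$, which is where the subgroup case saves one compared with Theorem~\ref{ratheight1}), and then, since maximality forces every $x\in V(k,t,1)$ to have its prime-exponent vector in the rational span of those of $x_1,\dots,x_\ell$, it chooses $\ell$ primes $q_1<\dots<q_\ell$ indexing a nonsingular $\ell\times\ell$ submatrix of the exponent matrix. Only this makes the restriction $x\mapsto(\beta_1,\dots,\beta_\ell)$ (exponents at $q_1,\dots,q_\ell$) injective, after which the split into $J_+$ and $J_-$ with $m_+\le k$, $m_-\le k$ yields $\tPh(k,\ell)\le\tPh(k,s)$. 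To repair your argument you would need to replace the greedy selection by this maximality-plus-nonsingular-submatrix step (or otherwise prove injectivity of the restriction to your selected primes), since the $m\le s$ bound alone does not control $\#V(k,t,1)$.
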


Theorems~\ref{intheight1}--\ref{ratheight2} can be useful
only for very small $k$. For example, if $k\gg p^\delta$ with a fixed
$\delta>0$ then the estimates given by these theorems are trivial.
However, using ideas of their proofs we can
estimate $\#V(k,t,a)$ non-trivially for very small subgroups $G$ and
and not too small $k$. In particular, if $\delta\in(0,1/10)$
is fixed and $k\asymp p^{\delta}$ then the following theorem is nontrivial
for a certain range of $t$. Denote
$$r_2(k)=\fl{\frac{\log(p/2)}{8\log k}-\frac14}.$$

\begin{thm}\label{ratheight3} Let $s\in\NN$. For any $a\in\FF_p^*$
we have
$$\#V(k,t,a)\le
\max\(2\tPh(k^2,s-1),\binom{r+s}{s}^{-1}t\)$$
where $r= r_2(k)$.
\end{thm}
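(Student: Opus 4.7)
The plan adapts the product-of-elements technique behind Theorems~\ref{intheight1}--\ref{ratheight2} by first passing to the subgroup coset. If $V(k,t,a)=\emptyset$ there is nothing to prove, so fix $x_0\in V(k,t,a)$ with a small rational representation $a_0/b_0$, $|a_0|,b_0\le k$. For every $x\in V(k,t,a)$ with representation $a/b$, the quotient $x/x_0\in G$ has the rational representation $(ab_0)/(a_0b)$ of height at most $k^2$. Hence the map $x\mapsto x/x_0$ embeds $V(k,t,a)$ into $W:=V(k^2,t,1)$, and it suffices to bound $|W|$.

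Set $r=r_2(k)$, so that the defining inequality $k^{8r+2}\le p/2$ holds. This is calibrated so that any congruence
\[
y\prod_{i=1}^{s}z_i^{e_i}\ \equiv\ y'\prod_{i=1}^{s}z_i^{f_i}\pmod{p}
\]
with $y,y',z_1,\dots,z_s\in W$ and $\sum e_i,\sum f_i\le r$ can be lifted to a rational identity in $\QQ$: clearing denominators rewrites the congruence as $M_1N_2\equiv M_2N_1\pmod{p}$ with $|M_jN_\ell|\le k^{4r+4}\le k^{8r+2}/2\le p/4$, which forces the integer equality $M_1N_2=M_2N_1$, hence the rational identity $y/y'=\prod z_i^{f_i-e_i}$ in $\QQ$.

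We now split on whether $|W|\le 2\tPh(k^2,s-1)$; if so we are done. Otherwise we extract inductively a sequence $z_1,\dots,z_s\in W$ together with distinct primes $q_1,\dots,q_s>p_{s-1}$ such that each $q_j$ divides the reduced numerator or denominator of $z_j$ but of none of $z_1,\dots,z_{j-1}$. Failure of this extraction at some stage $j\le s$ forces the remaining elements of $W$ to be supported on $\{p_1,\dots,p_{s-1}\}\cup\{q_1,\dots,q_{j-1}\}$; counting such smooth rationals of height at most $k^2$, together with the doubling to account for the two possible locations (numerator or denominator) of the next large prime, gives the estimate $|W|\le 2\tPh(k^2,s-1)$.

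With the $z_j$ in hand, we associate to each $y\in W$ (restricted, if necessary, to those coprime to every $q_j$, which excludes at most $O(\tPh(k^2,s-1))$ further elements absorbable into the first branch) and each vector $(e_1,\dots,e_s)$ with $e_i\ge 0$ and $\sum e_i\le r$ the product $y\prod z_i^{e_i}\in G$, giving $|W|\binom{r+s}{s}$ tuples. Any collision lifts by Step~2 to a rational identity $y/y'=\prod z_i^{f_i-e_i}$, and inspecting the $q_j$-adic valuation on both sides -- using that $q_j$ divides only $z_j$ among the $z_i$'s and neither $y$ nor $y'$ -- pins down each $e_j=f_j$, and then $y=y'$. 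Injectivity into $G$ then gives $|W|\binom{r+s}{s}\le t$, which is the second branch of the maximum. The principal difficulty is the simultaneous control of rational heights on \emph{both} sides of the identity in Step~2 -- this is precisely what dictates the factor $8$ (and the $-1/4$ correction) in the definition of $r_2(k)$, and ensuring that the $q_j$-valuation argument has no ``leakage'' from the numerators and denominators of the $y$'s is the delicate combinatorial core.
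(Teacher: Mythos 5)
Your overall architecture (a dichotomy, then a product map $(e_1,\dots,e_s,y)\mapsto y\prod z_i^{e_i}$ injected into the subgroup, with $r_2(k)$ calibrated so that congruences of such products lift to rational identities) parallels the paper, but the dichotomy is driven by the wrong invariant, and this is where the argument breaks. In your first branch, failure of the extraction at stage $j$ does \emph{not} force the elements of $W$ to be supported on $\{p_1,\dots,p_{s-1}\}\cup\{q_1,\dots,q_{j-1}\}$: it only says that every prime $>p_{s-1}$ occurring in some element of $W$ already occurs in the numerator or denominator of one of $z_1,\dots,z_{j-1}$, and each such $z_i$ (of height up to $k^2$) can carry on the order of $\log k/\log p_{s-1}$ such primes, not just the single prime $q_i$. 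Even if the claimed support restriction held, a support of $(s-1)+(j-1)$ primes would give a count of the shape $\tPh(k^2,s+j-2)$, not $2\tPh(k^2,s-1)$. The paper's first branch uses a different and strictly weaker-to-verify property: if a maximal multiplicatively independent subset of the relevant set has fewer than $s$ elements, then the exponent vector of \emph{every} element lies in the rational span of at most $s-1$ vectors, and projecting onto the prime coordinates of a nonsingular submatrix shows each element is \emph{determined by} its exponents at $s-1$ primes (not \emph{supported on} $s-1$ primes); the $2\tPh(k^2,s-1)$ count comes from that determination, and your support-based condition delivers neither the hypothesis nor the count.

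In your second branch there is a second genuine gap: you discard the $y\in W$ whose numerator or denominator is divisible by some $q_j$ and assert the loss is $O(\tPh(k^2,s-1))$, ``absorbable into the first branch''. No argument is given, and none is available at this level of generality: the set of elements of $V(k^2,t,1)$ divisible by a fixed prime $q>p_{s-1}$ is precisely the kind of set the theorem is trying to bound, and nothing in your construction makes it small; moreover the theorem's conclusion is a clean maximum, not a sum with an extra $O$-term. (A smaller issue: your extraction only guarantees $q_j\nmid z_i$ for $i<j$, while your valuation step assumes $q_j$ divides no $z_i$ with $i\ne j$; that is repairable by downward induction, but the coprimality restriction on $y,y'$ is not.) The paper avoids any coprimality condition: when the rank is at least $s$ it chooses the independent elements $x_1,\dots,x_\ell\in V(k,t,1)$ to maximize the volume of the parallelepiped of exponent vectors, which yields $\|x_1^{u_1}\cdots x_\ell^{u_\ell}\|>k^{1/2}$ for every nonzero integer vector (see \eqref{lowheightest}), hence $\|\prod_j x_j^{4u_j}\|>k^2\ge\|y'/y\|$ for any $y,y'\in V(k,t,a)$; the map $(u_1,\dots,u_\ell,y)\mapsto \prod_j x_j^{4u_j}y$ with $y$ running over all of $V(k,t,a)$ is then injective, lands in $aG$, and has height at most $k^{4r+1}<\sqrt{p/2}$, giving $\binom{r+s}{s}\,\#V(k,t,a)\le t$ with no exceptional elements (note it works with $V(k,t,a)$ directly, so your initial passage to $W=V(k^2,t,1)$ is unnecessary there). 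Until your two counting claims are justified--and the first appears unfixable in the form stated--the proposal does not establish the theorem.
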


We also have:

\begin{thm}\label{ratheight4} Let $s\in\NN$. We have
$$\# V(k,t,1) \le\max\(2\tPh(k,s-1),\binom{r+s}{s}^{-1}t\)$$
where $r= r_2(k)$.
\end{thm}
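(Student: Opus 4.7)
The plan is to mirror the argument for Theorem \ref{ratheight3}, exploiting that the identity element lies in $V(k,t,1)$ (since $\|1\|=1$) in order to sharpen the smoothness radius from $k^2$ to $k$. This is directly analogous to the saving that distinguishes Theorem \ref{intheight2} and Corollary \ref{subgr1} from Theorem \ref{intheight1}: when the coset is $G$ itself one may pivot about the identity and avoid inflating heights when passing to auxiliary products.

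Enumerate $V(k,t,1)=\{x_1,\ldots,x_m\}$, write each $x_i\equiv a_i/b_i\pmod p$ in lowest terms with $\max(|a_i|,b_i)\le k$, and set $r=r_2(k)$; the definition provides a size budget of the form $k^{8r+2}\le p/2$, which is what is needed to lift mod-$p$ multiplicative identities of the relevant depth to identities in $\QQ$. The argument is a dichotomy. Fix $r+s$ ``test'' elements of $V(k,t,1)$ and form the family of $\binom{r+s}{s}$ monomials indexed by size-$s$ subsets; multiplying each $x_i$ by each monomial yields $m\binom{r+s}{s}$ elements of $G$. If these products are all distinct then $m\binom{r+s}{s}\le|G|=t$, which gives the second term in the maximum. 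Otherwise a collision produces a nontrivial multiplicative identity of bounded depth among elements drawn from $V(k,t,1)$; the choice of $r_2(k)$ guarantees that after clearing denominators both sides of this identity are at most $\sqrt{p/2}$, so the identity lifts to one in $\QQ$.

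Reading off this rational identity prime by prime then constrains each participating $x_i$ to fit one of a bounded number of ``smoothness profiles'': the primes dividing $a_i$ and $b_i$ must come from those appearing in the test elements, with restricted multiplicities controlled by the fixed split index. Because the test elements themselves lie in $V(k,t,1)$ rather than in an enlarged coset $aG$, the relevant integers are of absolute value at most $k$, so the total number of profiles is bounded by $\tPh(k,s-1)$ in place of the $\tPh(k^2,s-1)$ appearing in Theorem \ref{ratheight3}; the factor $2$ in $2\tPh(k,s-1)$ accounts for the two orientations of the numerator/denominator split. The main obstacle will be the careful bookkeeping in this smoothness-count step: tracking how the factor-of-$k$ saving coming from $1\in V(k,t,1)$ propagates through the lifted rational identity, and verifying that the constant $1/8$ in the definition of $r_2(k)$ provides just enough room at each stage of the dichotomy.
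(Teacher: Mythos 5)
Your proposal has the right flavour (a dichotomy, a family of roughly $\binom{r+s}{s}\cdot\#V(k,t,1)$ products placed inside $G$, a smooth-number count in the other branch), and your intuition for the $k$ versus $k^2$ saving when $a=1$ is the same as the paper's: in general one must pass to ratios $y/y_0$, of height $\le k^2$, while for the subgroup itself one may take $y_0=1$. But the dichotomy you run cannot carry the proof. You split according to whether your $m\binom{r+s}{s}$ products are pairwise distinct modulo $p$, and in the collision branch you lift a single relation $x_im_S\equiv x_jm_T$ to an identity in $\QQ$ and assert that it confines \emph{every} element of $V(k,t,1)$ to at most $2\tPh(k,s-1)$ ``smoothness profiles''. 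That inference does not follow: one multiplicative relation among at most $2s+2$ specific elements says nothing about the remaining elements of $V(k,t,1)$. The input that actually produces $2\tPh(k,s-1)$ in the paper is global, not a single collision: one dichotomizes on the multiplicative rank $\ell$ of $V(k,t,1)$ (the size of a maximal multiplicatively independent subset). If $\ell\le s-1$, the exponent vector of every $x\in V(k,t,1)$ lies in a rank-$\ell$ lattice, so after choosing $\ell$ ``detector'' primes $q_1<\dots<q_\ell$ (columns of a nonsingular submatrix of the exponent matrix) each $x$ is determined up to sign by its valuations at these primes; the signature rational has numerator and denominator at most $k$, and an order-preserving comparison with the smallest primes bounds the number of signatures by $\tPh(k,\ell)\le\tPh(k,s-1)$, the factor $2$ coming from the sign.

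In the complementary case the paper does not assume distinctness, it engineers it, and this is the second place your sketch breaks down. When $\ell\ge s$ one chooses the independent system $x_1,\dots,x_\ell$ so that the parallelepiped spanned by the exponent vectors has maximal volume, deduces that $\|\prod_j x_j^{u_j}\|>k^{1/2}$ for every nonzero integer vector $(u_j)$, and then uses the monomials $\prod_j x_j^{4u_j}$ with $u_1+\cdots+u_\ell\le r$: a collision would force a fourth power of height $>k^2$ to equal a ratio $y_2/y_1$ of height $\le k^2$, which is impossible, so the $\binom{r+\ell}{\ell}\#V(k,t,1)$ products are genuinely distinct and land injectively in $G$, giving the second term of the maximum. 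The height bookkeeping is also tied to this design: these monomials have multiplicative degree at most $4r$, so each product has rational height at most $k^{4r+1}<\sqrt{p/2}$, which is exactly the budget encoded in $r_2(k)$ and is what makes distinct rationals distinct in $\Fp$. Your monomials are products of $s$ distinct test elements, of degree $s+1$, which bears no relation to the budget $k^{8r+2}\le p/2$ once $s$ is large, so even the lifting step in your collision branch can fail. As written, neither branch of your argument is established.
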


We do not prove analogs of Theorem~\ref{ratheight3} and~\ref{ratheight4}
for integer heights. However, notice that some estimates for $\#U(k,t,a)$ can
be deduced using the trivial inequality $\#U(k,t,a)\le\#V(k,t,a)$.

\subsection{On solutions of systems of congruences}

A general problem is to estimate , for given $r$, $a_i$, $k_i$, $l_i$
$(i=1,\ldots,r)$, the number of solutions of a system of
congruences
\begin{equation}\label{congruences}
a_i x^{k_i}\equiv l_i+y_i \pmod p
\end{equation}
in the box
\begin{equation}\label{interval}
(x,y_1,\ldots,y_r) \in\Fp^*\times \prod_{i=1}^r [1,N_i].
\end{equation}

For integers  $1\le k_1<\ldots<k_r < p-1$ which satisfy
the conditions
\begin{equation}
\label{Ia}
(k_i,p-1)<p^{1-\eps},\qquad 1\le i\le r,
\end{equation}
and
\begin{equation}
\label{IIa}
(k_i-k_j,p-1)<p^{1-\eps},\qquad 1\le j < i\le r,
\end{equation}
J.~Bourgain~\cite{Bo1} has established the following result.

\begin{lem}
\label{lem:ExpSum} Given $r\in\NN$ and $\eps>0$, there is
$\delta>0$ depending only on $r$ and $\eps$, such that for a
sufficiently large prime $p$  and $1\le k_1<\ldots<k_r < p-1$
satisfying~\eqref{Ia} and~\eqref{IIa}, for
$(a_1,\ldots,a_r)\in\Fp^r\setminus\{\mathbf 0\}$ the bound holds
$$\max_{(a_1,\ldots,a_r)\in\Fp^r\setminus\{\mathbf 0\}}
\left|\sum_{x\in\mathbb F_p}
\exp\(\frac{2 \pi i}{p}\(a_1x^{k_1}+\ldots+a_r x^{k_r}\)\)\right|
< p^{1-\delta}.
$$
\end{lem}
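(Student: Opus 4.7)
The plan is to follow Bourgain's approach in~\cite{Bo1}, combining the Bourgain--Glibichuk--Konyagin (BGK) bound for exponential sums over small multiplicative subgroups with an iterative Cauchy--Schwartz and shift-by-subgroup argument. Let $S$ denote the sum in question; permuting indices and discarding zero coefficients, we may assume all $a_i\neq 0$. As a warm-up, take $r=1$: writing $d=(k_1,p-1)$ and $G=\{x^{k_1}:x\in\FF_p^*\}$, condition~\eqref{Ia} gives $|G|=t=(p-1)/d>p^{\eps}$; since $x\mapsto x^{k_1}$ is $d$-to-$1$ onto $G$, the sum equals $1+d\sum_{g\in G}\ep(a_1g)$, and the BGK theorem supplies $\delta_0=\delta_0(\eps)>0$ with $|\sum_{g\in G}\ep(a_1g)|\le t^{1-\delta_0}$, yielding $|S|\le p^{1-\eps\delta_0}$.

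For $r\ge 2$ the key device is a shift by a small multiplicative subgroup $H\subseteq\FF_p^*$ of order $h=p^{\eta}$, with $\eta=\eta(\eps,r)$ to be chosen. Since $x\mapsto xy$ is a bijection of $\FF_p$ for each $y\in H$, averaging over $H$ gives
$$hS=\sum_{x\in\FF_p}\sum_{y\in H}\ep\Bigl(\sum_{i=1}^{r}a_iy^{k_i}x^{k_i}\Bigr),$$
and Cauchy--Schwartz in the outer variable followed by expansion of the square yields
$$h^{2}|S|^{2}\le p\sum_{y_1,y_2\in H}\Bigl|\sum_{x\in\FF_p}\ep\Bigl(\sum_{i=1}^{r}a_i(y_1^{k_i}-y_2^{k_i})x^{k_i}\Bigr)\Bigr|.$$
For $y_1=y_2$ the inner sum is $p$; for $y_1\ne y_2$ set $z=y_1y_2^{-1}\in H\setminus\{1\}$ and factor out $y_2^{k_r}$, so the new coefficients become $a_iy_2^{k_i}(z^{k_i-k_r}-1)$. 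Condition~\eqref{IIa} implies that $z^{k_i-k_r}=1$ has at most $(k_i-k_r,p-1)<p^{1-\eps}$ solutions in $\FF_p^*$, so for all but a vanishing fraction of $z\in H$ every new coefficient is nonzero. On this generic set the inner sum is again an $r$-term mixed monomial sum of the same shape, to which the same bound applies; resolving the resulting self-improvement inequality produces a nontrivial $\delta=\delta(\eps,r)>0$.

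The main obstacle is the quantitative bookkeeping. Each Cauchy--Schwartz step halves the exponent saving, and $\eta$ must be calibrated so that the diagonal contribution ($\asymp hp$), the exceptional-$z$ contribution (where some $z^{k_i-k_r}=1$, controlled by~\eqref{IIa}), and the generic contribution (controlled by the current bound) together sit strictly below the trivial total $h^{2}p$. Condition~\eqref{Ia} is essential for BGK to produce the initial saving in the base case, while condition~\eqref{IIa} prevents a degenerate scenario in which a single shift by $H$ simultaneously collapses all $r$ new coefficients, which would destroy any cancellation gain.
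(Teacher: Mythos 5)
You should first note that the paper does not prove this lemma at all: it is quoted verbatim from Bourgain's paper \cite{Bo1} (``Mordell's exponential sum estimate revisited''), so there is no internal argument to compare with, and what you are attempting is a reproof of Bourgain's theorem itself. As such, your sketch reproduces the general philosophy (reduce to subgroup sums, difference, use sum--product input), but the concrete mechanism you propose does not work as stated.

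Three gaps are fatal. First, a multiplicative subgroup $H\subseteq\FF_p^*$ of order $h\approx p^{\eta}$ need not exist: subgroup orders divide $p-1$, and for $p-1=2q$ with $q$ prime the only proper subgroups have order $1,2,q$; Bourgain works with the subgroups $\{x^{k_i}\}$ of order $(p-1)/(k_i,p-1)>p^{\eps}$, whose existence is exactly what \eqref{Ia} guarantees, not with an auxiliary subgroup of prescribed size. Second, the degeneracy analysis is wrong: after Cauchy--Schwartz the $i$-th coefficient is $a_i\bigl(y_1^{k_i}-y_2^{k_i}\bigr)=a_iy_2^{k_i}\bigl(z^{k_i}-1\bigr)$ with $z=y_1/y_2$ (one cannot ``factor out $y_2^{k_r}$'' from an additive phase), so vanishing is controlled by $z^{k_i}=1$ inside $H$, i.e.\ by $(k_i,h)$, which can be as large as $h$; the bound $(k_i-k_j,p-1)<p^{1-\eps}$ on the number of solutions in all of $\FF_p^*$ says nothing about the fraction of exceptional $z$ in a set of size $p^{\eta}\ll p^{1-\eps}$, so ``all but a vanishing fraction'' is unsupported, and in any case the hypothesis that enters here is \eqref{Ia}, not \eqref{IIa}. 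Third, and most importantly, the ``self-improvement'' step is circular and quantitatively empty: writing $M$ for the maximum of the sum over admissible coefficient tuples, your inequality becomes $h^2M^2\le p\,\bigl(hp+h^2M+E\bigr)$, and even with $E=0$ this is satisfied by the trivial value $M=p$, so no $\delta>0$ can be extracted from it. Bourgain's actual proof does not bound the off-diagonal terms by the quantity being estimated; it applies H\"older with a large exponent to reduce to counting simultaneous solutions of systems $\sum_{j}\bigl(y_j^{k_i}-{y'_j}^{k_i}\bigr)\equiv\lambda_i \pmod p$, i.e.\ to additive energies of the sets $\{(y^{k_1},\ldots,y^{k_r})\}$ and of the associated subgroups, runs an induction on $r$ in which \eqref{IIa} ensures the monomials stay genuinely distinct after differencing, and only then invokes the sum--product/BGK machinery to obtain a power saving. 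Your $r=1$ warm-up is fine modulo the BGK theorem, but the inductive engine that makes the case $r\ge2$ work is missing from your plan.
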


\begin{remark}\label{best Mordell}\rm
The condition~\eqref{IIa} is essential,
as for instance the example $x - x^{(p+1)/2}$ shows.
\end{remark}
Using standard arguments, one can deduce from Lemma~\ref{lem:ExpSum}
the following.

\begin{cor}\label{asymcor}
Given $r\in\NN$ and $\eps>0$, there are $\delta>0$ and $C$,
depending only
on $r$ and $\eps$, with the following property.  If $p>C$ is a prime
and $1\le k_1<\ldots<k_r < p-1$ satisfy~\eqref{Ia} and~\eqref{IIa}
then for $(a_1,\ldots,a_r)\in\Fp^r\setminus\{\mathbf 0\}$,
$l_1,\ldots, l_r\in\Fp$, and $N_1,\ldots, N_r\in\NN$, $N_1,\ldots,N_r\le p$,
the number $N$ of solutions of the system of congruences~\eqref{congruences}
satisfies the inequalities
$$|N - N_1\cdots N_r/p^{r-1}|<p^{1-\delta}.$$
\end{cor}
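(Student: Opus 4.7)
The plan is the standard derivation of an asymptotic count from a nontrivial bound on the associated exponential sum. By orthogonality of additive characters, $\mathbf{1}[n\equiv 0 \pmod p] = p^{-1}\sum_{u=0}^{p-1} e_p(un)$ where $e_p(z) = \exp(2\pi i z/p)$, so expanding each congruence in \eqref{congruences} one obtains
\begin{equation*}
N = \frac{1}{p^r} \sum_{u_1, \ldots, u_r = 0}^{p-1} e_p\!\left(-\sum_{i=1}^r u_i l_i\right) S(u_1, \ldots, u_r) \prod_{i=1}^r T_i(u_i),
\end{equation*}
where $S(u_1, \ldots, u_r) = \sum_{x\in\FF_p^*} e_p\!\left(\sum_{i=1}^r u_i a_i x^{k_i}\right)$ and $T_i(u_i) = \sum_{y=1}^{N_i} e_p(-u_i y)$.

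The zero frequency $(u_1, \ldots, u_r) = \mathbf 0$ contributes $(p-1)N_1\cdots N_r/p^r$, which differs from the claimed main term $N_1\cdots N_r/p^{r-1}$ by $O(1)$ since each $N_i \le p$. For the remaining frequencies I would partition according to the set $J = \{i : u_ia_i\neq 0\}$: when $J\neq\emptyset$ the $x$-sum reduces to an exponential sum over the exponents $\{k_i\}_{i\in J}$ with nonzero coefficients $(u_ia_i)_{i\in J}$, and since conditions \eqref{Ia} and \eqref{IIa} are inherited by any subset of the $k_i$, Lemma~\ref{lem:ExpSum} (applied with parameter $|J|$, and $\delta_0$ taken as the minimum of the resulting exponents over $|J|=1,\ldots,r$) yields $|S|\le p^{1-\delta_0} + 1$, the $+1$ correcting for the omitted $x=0$ term in the lemma's range. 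Combining this with the classical bound $\sum_{u=0}^{p-1} |T_i(u)| \ll p\log p$, the total contribution from frequencies with $J\neq\emptyset$ is $O\bigl(p^{1-\delta_0}(\log p)^r\bigr)$, which after a slight reduction of $\delta_0$ is absorbed into $p^{1-\delta}$.

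The main technical point is handling the degenerate frequencies with $(u_1,\ldots,u_r)\neq\mathbf 0$ but $J = \emptyset$, which can arise only when some $a_i = 0$ and the corresponding $u_i\neq 0$: there the $x$-sum equals $p-1$ and the lemma does not apply. However, for each such index the $i$-th congruence in \eqref{congruences} forces $y_i\equiv -l_i\pmod p$, contributing at most one admissible value in $[1, N_i]$; treating these indices separately in the product and carrying out the Fourier step only over the remaining coordinates shows that this degenerate contribution is again $O(1)$ after its own main term is absorbed into $N_1\cdots N_r/p^{r-1}$. Choosing $C$ large enough that $p^{1-\delta_0}(\log p)^r < p^{1-\delta}$ for all $p > C$ then completes the proof.
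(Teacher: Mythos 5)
Your route is exactly the ``standard arguments'' the paper invokes (the paper itself supplies no proof of Corollary~\ref{asymcor}), and in the main case it is complete and correct: the orthogonality expansion, the $O(1)$ discrepancy between the zero-frequency contribution $(p-1)N_1\cdots N_r/p^r$ and the stated main term, the application of Lemma~\ref{lem:ExpSum} to the subset $\{k_i\}_{i\in J}$ (conditions~\eqref{Ia} and~\eqref{IIa} are indeed inherited, $\delta_0$ may be taken as the minimum over $1\le |J|\le r$, and the $+1$ correctly accounts for the missing $x=0$ term), and the classical bound $\sum_{u}|T_i(u)|\ll p\log p$ giving a total error $O\bigl(p^{1-\delta_0}(\log p)^r\bigr)$, absorbed into $p^{1-\delta}$ for $p>C$. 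When every $a_i$ is nonzero there are no degenerate frequencies at all, and this is the only situation in which the corollary is used later (in the proof of Theorem~\ref{thm3} one has $a_1,a_2\in\FF_p^*$), so in that regime your proof is done.

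The genuine gap is your last paragraph. The claim that the degenerate contribution (indices with $a_i=0$, which the hypothesis $(a_1,\ldots,a_r)\neq\mathbf 0$ does permit) is ``again $O(1)$ after its own main term is absorbed into $N_1\cdots N_r/p^{r-1}$'' is false, and no rearrangement of the Fourier step can rescue it, because the statement itself fails there: if $a_i=0$ the $i$-th congruence forces $y_i\equiv -l_i\pmod p$, so that index contributes a factor $0$ or $1$ to $N$, whereas the asserted main term carries the factor $N_i/p$, which need not be near $0$ or $1$. Concretely, take $r=2$, $k_1<k_2$ satisfying~\eqref{Ia} and~\eqref{IIa}, $a_1=1$, $a_2=0$, $l_2=0$, $N_1=p$, $N_2=p-1$: the second congruence has no admissible $y_2$, so $N=0$, while $N_1N_2/p=p-1$, and $|N-N_1N_2/p|<p^{1-\delta}$ fails for large $p$. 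So the corollary as literally stated is defective when some $a_i=0$; the correct move is to prove it under the assumption $a_1,\ldots,a_r\in\FF_p^*$ (which is all the paper ever needs), or to modify the main term to include the degenerate factors explicitly — not to assert that the discrepancy is $O(1)$.
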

In particular, we have nontrivial solutions if
$$N_1\ldots N_r>p^{r-\delta}.$$

In~\cite[Theorem~17]{BBK} the existence of solutions is proved under
weaker restrictions on differences $k_i-k_j$, namely
$$
(k_i-k_j,p-1)<\frac p B\qquad (1\le j < i\le r)
$$
instead of~\eqref{IIa}, where $B$ depends only on $r$ and $\eps$,
 which however are not enough for getting an upper estimate on the
number of solutions of the same order $N_1\cdots N_r/p^{r-1}$.

Furthermore, the estimates for the number of solutions of
two congruences are given in~\cite[Theorem~19]{BBK} in a more precise form
under the conditions
\begin{equation}
\label{eq:Cond k}
(k_i,p-1)<p^{1-\eps}\quad (i=1,2) \mand (k_1-k_2,p-1)<\frac{p-1}{2}.
\end{equation}
More precisely, for
\begin{equation}a_1,a_2\in\mathbb F_p^*,\quad
l_1,l_2\in\Fp,\quad N_1,N_2\in\NN,
\end{equation}
we define
$$
I= \left\{\begin{matrix}
x\in\mathbb F_p^*~:~ &\kern -40pt\exists (n_1,n_2)\in[1,N_1]\times[1,N_2],\\
& a_jx^{k_j}\equiv l_j+n_j \pmod p\, (j=1,2) \end{matrix}\right\}.
$$
Then by~\cite[Theorem~19]{BBK},
for every $\eps>0$ there is $\eta>0$, such that the following holds.
If $k_1$, $k_2$ satisfy~\eqref{eq:Cond k}
then for $1\le N_1,N_2\le p$ and any $\delta \in (0, \eta)$
we have
\begin{equation}\label{kon1}
\# I\geq\(\frac{N_1N_2}p-Cp^{1-\delta}\)
\(1-\max\(\frac{2(k_1-k_2,p-1)}{p-1},5\delta\)\),
\end{equation}
where $C>0$ is an absolute constant.

\begin{remark} The estimate is nontrivial if $\delta<1/5$ and
$N_1N_2 > C p^{2-\delta}$.
\end{remark}

Thus,  the bound~\eqref{kon1}
gives a nontrivial estimate for the number of
solutions under very weak assumptions on $k_1-k_2$. If
$(k_1-k_2,p-1)$ is essentially smaller than $p$ then we can get
a better lower estimate.

\begin{thm}\label{thm3}
There exists an absolute constant $C>0$,
and for every $\eps>0$ there is $\eta>0$, such that the following holds.
If $\Delta\in\NN$, $k_1$, $k_2$,  $\eta$, satisfy~\eqref{eq:Cond k}
as well as
$$
1\le N_1,N_2\le p,\quad \Delta\le p^{\eta},
$$
then
\begin{equation}
\# I\geq\(\frac{N_1N_2}p-Cp\Delta^{-1}\)
\(1-t^{-1}\max_{a\in\Fp^*}\#V(\Delta,t,a)\),
\label{kon2}
\end{equation}
where
$$
t=\frac{p-1}{(k_1-k_2,p-1)},
$$
\end{thm}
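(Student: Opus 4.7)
The plan is to fiber the count of $\#I$ by the value $w=x^{k_1-k_2}$, which necessarily lies in the subgroup $G\subseteq\FF_p^*$ of order $t=(p-1)/d$, where $d=(k_1-k_2,p-1)$. Each $w\in G$ is the $(k_1-k_2)$th power of exactly $d$ elements of $\FF_p^*$, forming a coset $C_w$ of the kernel subgroup $H=\{h\in\FF_p^*:h^{k_1-k_2}=1\}$. Since $x^{k_1}=w\,x^{k_2}$ for $x\in C_w$, setting $y=x^{k_2}$ reduces the two congruences defining $I$ to
\begin{equation*}
a_1wy\equiv l_1+n_1\pmod p,\qquad a_2y\equiv l_2+n_2\pmod p.
\end{equation*}
Let $N(w)$ be the number of $x\in C_w$ for which both reduced residues $a_1w\,x^{k_2}-l_1$ and $a_2x^{k_2}-l_2$ lie in $[1,N_1]$ and $[1,N_2]$, respectively; then $\#I=\sum_{w\in G}N(w)$.

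The core estimate to establish is that for every $w\in G$ whose frequency $c(w):=(a_1/a_2)w$ satisfies $\|c(w)\|>\Delta$, one has
\begin{equation*}
N(w)\ge\frac{N_1N_2}{pt}-\frac{Cp}{t\Delta}.
\end{equation*}
This is proved by expanding the indicator functions of $[1,N_j]\pmod p$ as Fourier series truncated at height $\Delta$ (say via Beurling--Selberg minorants). The constant Fourier mode supplies the main term $N_1N_2/(pt)$ per fiber, and the high-frequency tail beyond $\Delta$ contributes an error of order $p/(t\Delta)$. The critical point is that the nonresonance condition $\|c(w)\|>\Delta$ guarantees that for every $(h_1,h_2)\ne(0,0)$ with $|h_1|,|h_2|\le\Delta$ the phase $h_1a_1w+h_2a_2$ is nonzero modulo $p$, so the inner exponential sum over $y=x^{k_2}$ (running through a coset of $H_2=\{h^{k_2}:h\in H\}$) exhibits cancellation. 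For $w$ violating the nonresonance condition we retain only the trivial bound $N(w)\ge0$.

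As $w$ runs over $G$, the frequency $c(w)=(a_1/a_2)w$ runs over the coset $(a_1/a_2)G$ of $G$; by the very definition of $V(\Delta,t,\cdot)$, the number of resonant $w$ is exactly $\#V(\Delta,t,a_1/a_2)\le\max_{a\in\FF_p^*}\#V(\Delta,t,a)$. Summing the per-fiber bound,
\begin{equation*}
\#I\ge\bigl(t-\max_a\#V(\Delta,t,a)\bigr)\left(\frac{N_1N_2}{pt}-\frac{Cp}{t\Delta}\right)=\left(\frac{N_1N_2}{p}-\frac{Cp}{\Delta}\right)\!\left(1-t^{-1}\max_a\#V(\Delta,t,a)\right),
\end{equation*}
which is exactly the claimed lower bound.

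The principal difficulty lies in the Fourier estimate on a single good fiber: the subgroup $H_2$ may be very small, so the inner exponential sums are delicate and one must track the multiplicity $(k_2,d)$ arising from the noninjectivity of $x\mapsto x^{k_2}$ on $C_w$. However, once $w$ is restricted to be nonresonant, the nonvanishing of the phases allows the Fourier tail and trivial subgroup-sum contributions to be combined into a uniform error $O(p/(t\Delta))$; the hypotheses $(k_j,p-1)<p^{1-\eps}$ and $\Delta\le p^\eta$ place the parameters in a regime where this balance goes through. The choice of truncation height $\Delta$ exactly trades off the Fourier tail of size $p/\Delta$ against the resonance correction measured by $\#V(\Delta,t,\cdot)$.
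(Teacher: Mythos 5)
Your overall architecture is the same as the paper's: fiber the count according to the value of $x^{k_1-k_2}$ in the subgroup $G$ of order $t$, discard the resonant fibers (whose number is exactly $\#V(\Delta,t,a_1/a_2)$, whence the factor $1-t^{-1}\max_a\#V(\Delta,t,a)$), and prove equidistribution on each remaining fiber by Fourier truncation at height $\Delta$. However, the step you yourself flag as the "principal difficulty" is exactly where the argument has a genuine gap, and as stated your core estimate is false. Nonresonance of $w$ only guarantees that the frequencies $h_1a_1w+h_2a_2$ are nonzero modulo $p$; it produces no cancellation by itself. The cancellation must come from a nontrivial bound for exponential sums over a coset of the subgroup $H_2=\{h^{k_2}:h\in H\}$, and such a bound is simply unavailable when $d=(k_1-k_2,p-1)$ is small: if $d$ is bounded (e.g.\ $d=1$), each fiber has $O(1)$ elements, $N(w)\in\{0,d\}$, and the claimed per-fiber lower bound $N_1N_2/(pt)-Cp/(t\Delta)$ fails for a positive proportion of nonresonant $w$ whenever $N_1N_2$ is a constant fraction of $p^2$. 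So the per-fiber statement cannot hold uniformly over the full range of parameters allowed by~\eqref{eq:Cond k}.

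The paper avoids this by a case split you are missing: if $d<p^{1-\eps/2}$ the theorem already follows from Corollary~\ref{asymcor} (in that range $t$ is huge and the factor involving $\#V$ is harmless), and only when $d\ge p^{1-\eps/2}$, i.e.\ $t\le p^{\eps/2}$, does one fiber. In that regime the fiber sums become monomial sums: writing the fiber as $\{y^t z\}$, the relevant sum is $\sum_y \exp\bigl(\tfrac{2\pi i}{p}\,y^{k_1t}z^{k_1}(\lambda_1a_1+\lambda_2a_2z^{k_2-k_1})\bigr)$, and since $(k_1t,p-1)\le t(k_1,p-1)<p^{1-\eps/2}$ Lemma~\ref{lem:ExpSum} gives the bound $p^{1-\delta}$; combined with Lemma~\ref{lem:K-S} with $L=\Delta$ this yields $\#I_z=N_1N_2/p+O(p/\Delta+p^{1-\delta}(\log\Delta)^2)$, and the hypothesis $\Delta\le p^\eta$ with $\eta\le\delta/2$ is precisely what makes the second error term dominated by $p/\Delta$. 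Your write-up never identifies this exponential-sum input (Bourgain's Mordell-type bound), never uses the largeness of $d$ (equivalently smallness of $t$) that makes $H_2$ of size at least $p^{\eps/2}$, and never explains where $\eta=\eta(\eps)$ comes from; these are not routine details but the substance of the proof, and without them the "core estimate" is an unsupported assertion that is false in part of the stated parameter range.
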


Using Theorem~\ref{thm3} and Theorem~\ref{ratheight3} one can estimate
the number of solutions of a system of two congruences from below.
We   give some related examples.

\subsection{Fixed points of the discrete logarithms}

We consider some exponential congruences which are related to studying
fixed points of the discrete logarithms in finite fields,
see~\cite{CP, CZ,Ho, HM1, HM2, Zh}.
For a prime $p$ we denote by $F(p)$
the number of solutions of the congruence
\be\label{defcongr}
g^h\equiv h \pmod p,\quad 1\le g,h\le p-1,
\ee
with arbitrary integers $g$ and $h$.
J.~Holden and P.~Moree~\cite{HM2}
have conjectured that 
\be\label{eq:HM Conj}
F(p)=(1+o(1))p.
\ee
It has been
shown in~\cite{BKS} that $F(p)=p+O(p^{4/5+\eps})$ for a set of primes of
relative density $1$.

It is noted in~\cite[Bound~(33)]{BKS}  that
$$
F (p) \le (p-1) \tau(p-1),
$$
where $\tau(n)$ is the number of divisors of $n\in\NN$.
Therefore
\begin{equation}
\label{eq:TrivBound}
F(p)\le p\exp\(\frac{(\log 2+o(1))\log p}{\log\log p}\).
\end{equation}
We note that in~\eqref{eq:TrivBound} and everywhere else
in the paper, the argument of iterated logarithms is
always assumed to be large enough so that the function
is well-defined.

Towards the conjecture \eqref{eq:HM Conj},
here we prove the following upper estimate for $F(p)$.

\begin{thm}\label{Fupper}
We have
$$
F(p) =O(p).$$
\end{thm}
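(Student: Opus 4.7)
We decompose $F(p)$ by the divisor $\gcd(h,p-1)$ and bound each piece via the coset estimates established above.

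For each integer $h\in[1,p-1]$, let $d=\gcd(h,p-1)$ and $t=(p-1)/d$, and let $G_d\subset\FF_p^*$ denote the unique subgroup of order $t$. The map $x\mapsto x^h$ is a $d$-to-one homomorphism of $\FF_p^*$ onto $G_d$, so the congruence $g^h\equiv h\pmod p$ has $d$ solutions $g\in[1,p-1]$ if $h\bmod p\in G_d$, and none otherwise. Grouping by $d$,
\[
F(p)=\sum_{d\mid p-1}d\,A_d,\qquad A_d:=\#\{h\in[1,p-1]:\gcd(h,p-1)=d,\ h\bmod p\in G_d\}.
\]
Substituting $h=dh'$ with $h'\in[1,t]$ and $\gcd(h',t)=1$, the membership $h\in G_d$ translates into $h'\in d^{-1}G_d\pmod p$: the integer $h'$ must reduce into a specific coset of $G_d$. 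Dropping the coprimality restriction, $A_d$ is bounded by the number of positive integers up to $t$ that reduce into this coset, which for $t\le p/2$ is controlled by Theorem~\ref{intheight1}:
\[
A_d\le\Psi(t,p_{s+1}),\qquad s=s_0(t,t).
\]
The coprimality restriction alone also yields the trivial bound $A_d\le\varphi(t)$, where $\varphi$ is Euler's totient.

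The heuristic $A_d\approx\varphi(t)/d$ (the coset has density $1/d$ in $\FF_p^*$) predicts $F(p)\approx\sum_{t\mid p-1}\varphi(t)=p-1$, matching the conjectured asymptotic \eqref{eq:HM Conj}. To prove the unconditional bound $F(p)=O(p)$, one splits the divisors $d\mid p-1$ according to the size of $t=(p-1)/d$. For very small $t$ the quantity $s_0(t,t)$ is small, so $p_{s+1}$ stays bounded and $\Psi(t,p_{s+1})/t$ is tiny, making Theorem~\ref{intheight1} highly efficient. For polynomially-sized $t$, the sharper savings of Theorem~\ref{ratheight3} (applied via the trivial inclusion $\#U\le\#V$) provide a factor $\binom{r_2(t)+s}{s}^{-1}$ of savings over the trivial bound. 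For $t$ very close to $p-1$ there are only a handful of divisors $d$, and each contributes at most $p-1$ to $F(p)$.

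The main obstacle is arranging this three-regime split so that the divisor-function blowup $\tau(p-1)=p^{o(1)}$ is absorbed and the resulting sum is $O(p)$. This amounts to a quantitative balancing, using the explicit forms of $r_0$, $s_0$, $r_2$ together with standard estimates on $\Psi(x,y)$, between the smoothness/combinatorial gain available in each range of $t$ and the number of divisors of $p-1$ contributing in that range.
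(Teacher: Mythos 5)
Your opening reduction is fine and is in fact the paper's own starting point: your identity $F(p)=\sum_{d\mid p-1}d\,A_d$ is (up to the harmless coprimality bookkeeping) the known formula~\eqref{expd} from Holden--Moree, and bounding $A_d$ by counting integers up to $t$ in a fixed coset is exactly the role of Lemma~\ref{newX} and Corollary~\ref{cor:estT} in the paper. But from that point on the proposal is a plan rather than a proof, and the plan has two concrete gaps. First, the large-$t$ regime is not covered. Theorems~\ref{intheight1} and~\ref{ratheight3} are trivial once $t\gg p^{\delta}$ (the paper says so explicitly), and your fallback ``for $t$ very close to $p-1$ there are only a handful of divisors, each contributing at most $p-1$'' only handles $d=O(1)$. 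In the wide range $p^{\delta}\ll t\le (p-1)/C$ there may be $p^{o(1)}$ divisors, and bounding each term $d\,A_d$ by $p-1$ gives $\tau(p-1)\,p$, which is not $O(p)$. The paper disposes of this range not with the coset theorems at all, but with imported results: the estimate~\eqref{smalld} from~\cite{BKS} for $k>K_1$ (giving the main term $1+o(1)$ in~\eqref{largek}) and the bound of Lemma~\ref{(39)} for $K_2<k\le K_1$, combined with divisor-counting via Corollary~\ref{bounddiv} and partial summation. Nothing in your sketch substitutes for these inputs.

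Second, in the small-$t$ regime the step you defer (``arranging the split so that the divisor-function blowup is absorbed'') is precisely where the naive bound breaks and where the real work of the paper lies. One must show $\sum_{t\mid p-1}A_d/t=O(1)$, so the bound $A_d\le\Psi(t,p_{s+1})$ has to beat the number of divisors of $p-1$ in each range. For $K_3\le t\le K_2$ this works, but only after controlling the divisor count by $\Psi(K,q)$ with $q\asymp\log p$ (Lemma~\ref{bounddivsm}, Corollary~\ref{bounddiv}) and running a delicate comparison of $\log\Psi(K,p_s)$ against $\log\Psi(K,q)$ via Lemma~\ref{lem:smoothmain}. For $t$ below $K_3$ (say $t=(\log p)^{O(1)}$) the product $\Psi(K,p_s)\Psi(K,q)/K$ is no longer small -- $\Psi(K,p_s)\asymp(\log K)^{s}$ overwhelms the smooth-number saving -- so ``$\Psi(t,p_{s+1})/t$ is tiny'' is not enough once you sum over divisors. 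This is why the paper proves the separate Lemma~\ref{K30}, replacing $\Psi(K,p_s)$ by the much stronger bound $(c\log K/M)^{s}$ from the Hahn--Banach-based Lemma~\ref{linalg} for ``good'' $k$, and handling the ``bad'' $k$ by a multiplicative-order argument together with Lemma~\ref{lem:smoothmain3}. None of this machinery, nor any replacement for it, appears in your proposal, so the central difficulty of the theorem remains unaddressed.
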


We remark that though obtaining an asymptotic formula  for $F(p)$
seems to be difficult, rather elementary arguments imply the lower bound
\begin{equation}
\label{eq:LowBound}
F(p)\ge p + O(p^{3/4+o(1)}).
\end{equation}
It is likely that the arguments of~\cite{BKS} can be used to get 
a better remainder term in \eqref{eq:LowBound}; we do not try 
to optimise it in this paper.

\subsection{Notation}
We recall that $U = O(V)$, $U \ll V$ and  $V \gg U$  are all equivalent to the
statement that the inequality $|U| \le c\,V$ holds with some
constant $c> 0$.
Sometimes we write $U = O_\lambda(V)$, $U \ll_\lambda V$ and  $V \gg_\lambda U$
to emphasise that the implied constant may depend on a certain parameter
$\lambda$. 
 We also write $U\asymp V$ if $U \ll V \ll U$.

%
\section{Distribution of Elements of Cosets of Small Subgroups}
\label{main}
%

\subsection{Proof of Theorem~\ref{intheight1}}
Take a maximal possible set
$$\{x_0,x_1,\ldots,x_{\ell}\}\subseteq\ZZ$$
of multiplicatively independent elements of $U(k,t,a)$.
We claim that
\be\label{ests}
\ell \le s.
\ee
Indeed, let
$$Y=\left\{x_0^{u_0}x_1^{u_1}\ldots x_\ell^{u_\ell}\in\ZZ~:~
u_0,\ldots,u_{\ell}\ge0,\ u_0+\ldots+u_{\ell}=r\right\}$$
where $r = r_0(k)$.

For any $y\in Y$ the exponents $u_0,\ldots,u_\ell$
are uniquely defined due to multiplicative independence of
$x_0,\ldots,x_{\ell}$. Therefore,
\be\label{est|Y|}
\# Y=\binom{r+\ell}{\ell}.
\ee
Next, for any $y\in Y$ we have $|y|\le k^{r}<p/2$.
Thus, different elements of $y\in\ZZ$ are different as elements
of $\Fp$ as well. Finally, $Y\subseteq a^{\ell+1}G$. Hence, $\# Y\le t$.
Comparing this inequality with~\eqref{est|Y|} and recalling the
definition~\eqref{eq:s0} of $s_0(k,t)$ we get~\eqref{ests}.

Take the largest possible $n$ so that $p_n\le k$
and define the matrix
$$A=\(\alpha_{i,j}\)_{0\le i\le \ell}^{1\le j\le n}$$
of exponents in the prime number factoizations
$$|x_i|=\prod_{j=1}^n p_j^{\alpha_{i,j}},\quad0\le i\le \ell.$$
By the choice of $x_0,\ldots,x_s$, the matrix $A$ is of full rank $\ell+1$
and for any element
$$x=\pm \prod_{j=1}^np_j^{\alpha_j}\in U(k,t,a)$$
the vector
$(\alpha_1,\ldots,\alpha_n)$ is a linear combination of rows of the matrix
$A$ with rational coefficients. We take a non-singular
$(\ell+1)\times(\ell+1)$
submatrix $B$ of $A$. Let the columns of $B$ correspond to prime numbers
$q_1<\ldots<q_{\ell+1}$.

For any $x\in U(k,t,a)$ we define $\bb(x)=(\beta_1,\ldots,\beta_{\ell+1})$ by
$$
q_j^{\beta_{j}} \mid x \mand q_j^{\beta_{j}+1} \nmid x,
\qquad 1\le j\le \ell+1.
$$

We denote the rows of $B$
by
$$\bb_i=(\beta_{i,1},\ldots,\beta_{i,\ell+1}), \qquad 0\le i\le \ell.$$
 So,
$$B=(\beta_{i,j})_{0\le i\le \ell}^{1\le j\le \ell+1}.$$
We see that $\bb_i = \bb(x_i)$.

We claim that different elements $|x|$ with $x\in U(k,t,a)$
define different vectors $\bb(x)$. Indeed, the vector $\bb(x)$
determines rational numbers $u_0,\ldots,u_\ell$ so that
$$\bb(x)=\sum_{i=0}^{\ell}u_i\bb_i.$$
Then
$$
|x| = \prod_{i=0}^{\ell} |x_i|^{u_i}
$$
is also uniquely determined by $\bb(x)$.

It suffices to estimate the number of possible vectors $\bb(x)$
with $x\in U(k,t,a)$. We note that
$$
\prod_{j=1}^{s+1} p_j^{\beta_j(x)}
\le  \prod_{j=1}^{s+1} q_j^{\beta_j(x)} \le |x|\le k$$
We now see that  different elements $|x|$ with $x\in U(k,t,a)$ define
different
$p_{\ell+1}$-smooth numbers, thus
$$\#U(k,t,a)\le\Psi(k,p_{\ell+1}).$$
Using~\eqref{ests} completes the proof.

\subsection{Proof of Theorem~\ref{intheight2}}
Now we take a maximal possible set
$$\{x_1,\ldots,x_\ell\} \subseteq\ZZ$$
of multiplicatively independent elements of $U(k,t,a)$
with $(x_0,x_j)=1$ for $j=1,\ldots,\ell$.
The inequality~\eqref{ests} can be proved in a similar way.
If $y=x_0^{u_0}x_1^{u_1}\ldots x_s^{u_s}$ then the number $|x_0|^{u_0}$
is determined as the largest power of $|x_0|$ dividing $y$. The exponents
$u_1,\ldots,u_\ell$ are uniquely defined due to multiplicative independence of
$x_1,\ldots,x_\ell$ and  the equality
$$
x_1^{u_1}\ldots x_\ell^{u_\ell}=\pm yx_0^{-u_0}.
$$
The matrix
$$A=(\alpha_{i,j})_{1\le i\le \ell}^{1\le j\le n}$$
is defined by the equalities
$$|x_i|=\prod_j p_j^{\alpha_{i,j}},\quad1\le i\le \ell.$$
The rest of the proof is the same as in Theorem~\ref{intheight1}.

\subsection{Proof of Theorem~\ref{ratheight1}}
Similarly to the proof of Theorem~\ref{intheight1}
we take a maximal possible set
$$
\{x_0,x_1,\ldots,x_\ell\}\in\QQ
$$
of multiplicatively independent elements of $V(k,t,a)$. Now we have to
verify that
\be\label{estsrat}
\ell \le s = s_1(k,t).
\ee
To do so, we define
$$Y=\left\{x_0^{u_0}x_1^{u_1}\ldots x_\ell^{u_\ell}\in\QQ~:~
u_0,\ldots,u_\ell\ge0, \ u_0+\ldots+u_s=r\right\}$$
where $r = r_1(k)$.
The proof of~\eqref{estsrat} follows the proof of~\eqref{ests}.
The distinction is that now for any $y\in Y$ we have
$\|y\|\le k^{r}<\sqrt{p/2}$,
and thus different rational values of  $y$
with this condition get reduced to different elements of $\Fp$.

 We also choose prime numbers
$q_1<\ldots<q_{\ell+1}$ as in the proof of Theorem~\ref{intheight1}.
Now we have to estimate the number of rational numbers of the form
$$m=\prod_{j=1}^{s+1} q_j^{\beta_j}\in\QQ~:~\|m\|\le k.$$
Denote
$$J_+=\{j~:~\beta_j\ge0\},\quad m_+=\prod_{j\in J_+}q_j^{\beta_j}\in\NN,$$
$$J_-=\{j~:~\beta_j<0\},\quad m_-=\prod_{j\in J_+}q_j^{-\beta_j}\in\NN.$$
So, $m=m_+/m_-$, $m_+\le k$, $m_-\le k$. For a fixed $i=\# J_+$ there are
$\binom{\ell+1}{i}$ ways to select $J_+\subseteq{1,\ldots,\ell+1}$. As
$J_+$ and $J_-$ have been chosen, there are at most $\Psi(k,p_i)$
possible values for $m_+$ and at most $\Psi(k,p_{\ell-i+1})$
possible values for $m_-$. Combining these estimates we complete the proof.

\subsection{Proof of Theorem~\ref{ratheight2}}
We follow the proof of
Theorem~\ref{ratheight1}. Now we take a maximal possible set
$\{x_1,\ldots,x_s\}$ of multiplicatively independent elements of $V(k,t,1)$
and define
$$Y=\left\{x_1^{u_1}\ldots x_\ell^{u_\ell}\in\QQ~:~
u_1,\ldots,u_\ell\ge0,\ u_1+\ldots+u_\ell\le r\right\}.$$

\subsection{Proof of Theorems~\ref{ratheight3}
and~\ref{ratheight4}}

We   prove Theorems~\ref{ratheight3} and~\ref{ratheight4}
simultaneously.

We take a maximal possible set $\{x_1,\ldots,x_\ell\}$
of multiplicatively independent elements of $V(k,t,1)$. We consider separately
two cases.

\subsubsection*{Case of $\ell<s$.} Assume that $V(k,t,a)\neq\emptyset$ and fix
$y_0\in V(k,t,a)$. Then for any $y\in V(k,t,a)$ we have
$y=xy_0$ for some $x\in V(k^2,t,1)$. By the arguments
from the proof of Theorem~\ref{ratheight3} the number of elements $x$
satisfying these conditions is at most
$2\tPh(k^2,\ell)\le2\tPh(k^2,s-1)$ as desired
for Theorem~\ref{ratheight3}. If $a=1$ we take $y_0=1$ and
we have $x=y\in V(k,t,1)$. This gives the bound
$\# V(k,t,1)\le2\tPh(k,s-1)$ as desired for Theorem~\ref{ratheight4}.

\subsubsection*{Case of $\ell \ge s$.}
Take the largest possible $n$ so that $p_n\le k$.
Define the matrix
$$A=(\alpha_{i,j})_{1\le i\le \ell}^{1\le j\le n}$$
by equalities
$$|x_i|=\prod_{j=1}^n p_j^{\alpha_{i,j}},\quad1\le i\le \ell.$$
By the choice of $x_1,\ldots,x_s$, the matrix $A$ has rank $\ell$
and for any element
$$
x=\pm\prod_{j=1}^n p_j^{\alpha_j}\in V(k,t,1)
$$
the vector
$\ab(x)=(\alpha_1,\ldots,\alpha_n)$ is a rational linear combination of rows
of the matrix $A$. Equivalently, the vector
$(\alpha_1,\ldots,\alpha_n)$ belongs to the linear subspace $Y$ over $\QQ$
generated by $\ab(x_j)$, $j=1,\ldots,\ell$. We now assume
that the elements
$x_1,\ldots,x_n$ are chosen so that the parallelepiped
$\curly{P}(x_1,\ldots,x_\ell)$ with edges $\ab(x_j)$,
$j=1,\ldots,\ell$ has a maximal volume.

We claim that for any integer vector $(u_1,\ldots,u_\ell)\neq0$ we have
\be\label{lowheightest}
\|x\|> k^{1/2},\quad x =\prod_{j=1}^\ell x_j^{u_j}\in\QQ.
\ee
Indeed, assume that~\eqref{lowheightest} does not hold. Without loss of
generality, we can assume that $|u_1|\neq0$. We consider a new system
of multiplicatively independent elements of $V(k,t,1)$
obtained by replacing a number $x_1$ in the system $(x_1,\ldots,x_\ell)$
by $x^2$. Then the volume of $\curly{P}(x^2,x_2,\ldots,x_\ell)$ is the volume
of $\curly P(x_1,\ldots,x_\ell)$ multiplied by $2|u_1|>1$. This does not agree
with the choice of $(x_1,\ldots,x_\ell)$. So,~\eqref{lowheightest} holds.

Now we consider the set
\begin{equation*}
\begin{split}
Y=\Biggl\{\prod_{j=1}^\ell x_j^{4u_j}y~:~u_0,&\ldots,u_\ell\ge0, \\
& u_0+\ldots+u_\ell\le r,\
y\in V(k,t,1)\Biggr\}\subseteq\QQ,
\end{split}
\end{equation*}
where $r = r_2(k)$.
Using~\eqref{lowheightest} and the inequality $\|y_1/y_2\|\le k^2$
for $y_1,y_2\in V(k,t,1)$, we deduce that different
vectors $(u_1,\ldots,u_\ell,y)$
define different elements of $y$. Therefore,
\be\label{est|Y|3}
\# Y=\binom{r_2+\ell}{\ell}\# V(k,t,1).
\ee
Next, for any $y\in Y$ we have $\|y\|\le k^{4r_0+1}<\sqrt{p/2}$. Thus,
different rational values of  $y$
with this condition get reduced to different elements of $\Fp$.
Finally, $Y\subseteq aG$. Hence, $\# Y\le t$.
Comparing this inequality with~\eqref{est|Y|3} we get
$$t\ge\binom{r_2+\ell}{ \ell}\# V(k,t,1)\ge\binom{r_2+s}{s}\# V(k,t,1).$$
This completes the proof.

%
\section{Estimates for sets of smooth numbers}
%

We need several estimates for $\Psi(x,y)$.

If $y$ is small, we use the following result,
see~\cite[Theorem~1.4]{HT}:

\begin{lem}\label{lem:smoothmain} Uniformly for $x\ge y\ge 2$, we have
$$\log\Psi(x,y)=Z\(1+O\(\frac1{\log y}+\frac1{\log\log x}
\)\),$$
where
$$Z=\frac{\log x}{\log y}\log\(1+\frac y{\log x}\)
+\frac{y}{\log y}\log\(1+\frac{\log x}y\).$$
\end{lem}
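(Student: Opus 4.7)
The plan is to establish matching upper and lower bounds for $\log\Psi(x,y)$ that agree with $Z$ up to the prescribed multiplicative factor $1+O(1/\log y+1/\log\log x)$.  The two classical tools I would reach for are Rankin's inequality for the upper bound and an explicit lattice-point count in a simplex for the lower bound, tied together by a saddle-point analysis.

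For the upper bound, for every $\sigma\in(0,1)$,
\[
\Psi(x,y)\le x^\sigma\sum_{\substack{n\ge1\\n\ y\text{-smooth}}}n^{-\sigma}=x^\sigma\prod_{p\le y}(1-p^{-\sigma})^{-1},
\]
so that
\[
\log\Psi(x,y)\le \sigma\log x-\sum_{p\le y}\log(1-p^{-\sigma}).
\]
I would approximate the prime sum by $\int_2^y\log(1-t^{-\sigma})^{-1}\,d\pi(t)$ using the prime number theorem with a classical error term, then optimize $\sigma$ at the saddle point characterized by
\[
\log x=\sum_{p\le y}\frac{\log p}{p^\sigma-1}.
\]
Evaluating the right-hand side at this optimal $\sigma=\sigma_0$ and substituting back should deliver an upper bound of the form $Z\cdot(1+O(1/\log y+1/\log\log x))$ after some algebraic simplification, since the two summands of $Z$ arise precisely as the contributions of $\sigma_0\log x$ and of the prime sum in the two extreme regimes.

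For the lower bound, every $y$-smooth integer $\le x$ corresponds uniquely to a nonnegative integer vector $(a_p)_{p\le y}$ with $\sum_{p\le y}a_p\log p\le\log x$, and distinct vectors give distinct integers; hence $\Psi(x,y)$ equals the number of lattice points in this simplex.  One can bound this below by the continuous volume with controlled boundary contributions, or, more sharply, by explicitly exhibiting a large family of smooth integers $\prod_{p\le y}p^{a_p}$ with exponents $a_p$ distributed according to the same saddle-point profile (roughly $a_p\asymp (\log x)/(\pi(y)\log p)$ when $y\ll\log x$, and essentially squarefree products of primes near $y$ when $y\gg\log x$).  Taking logarithms and invoking the prime number theorem once more reduces the resulting expression to $Z$ up to the same relative error.

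The main obstacle I anticipate is maintaining the uniform error factor $1+O(1/\log y+1/\log\log x)$ across the whole range $x\ge y\ge 2$, particularly through the transition zone $y\asymp\log x$ where neither summand of $Z$ dominates and the saddle point migrates.  Two independent error sources must be controlled simultaneously: the replacement of discrete prime sums by integrals (requiring PNT-quality error terms) and the looseness of Rankin's bound away from the precise saddle.  Combining these into the single clean error factor in the statement is the technical heart of the argument and the reason one typically imports the bound from Hildebrand--Tenenbaum rather than redoing it from scratch.
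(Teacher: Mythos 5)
There is no internal proof in the paper to compare against: Lemma~\ref{lem:smoothmain} is quoted verbatim from Hildebrand--Tenenbaum (\cite[Theorem~1.4]{HT}; the estimate goes back to de Bruijn), and the paper simply imports it. So the only question is whether your sketch would stand on its own as a proof, and as written it does not. You have correctly identified the classical strategy --- Rankin's inequality with a (near-)saddle-point choice of $\sigma$ for the upper bound, and a lattice-point/explicit-construction count of $y$-smooth numbers for the lower bound --- and this is indeed the route taken in the literature. But the statement being proved is precisely the uniform relative error $1+O(1/\log y+1/\log\log x)$ over the entire range $x\ge y\ge 2$, and that is exactly the part you defer: you name the transition zone $y\asymp\log x$, the discrete-sum-versus-integral errors, and the slack in Rankin's bound as ``the technical heart'' and then stop, even remarking that one would normally cite Hildebrand--Tenenbaum instead. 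An argument whose decisive quantitative step is acknowledged but not carried out is a gap, not a proof: nothing in your outline verifies, for instance, that the loss in Rankin's inequality is multiplicatively of size $1+O(1/\log y)$ rather than, say, a factor $\exp(O(u))$ which would be fatal when $y$ is large, nor that the lower-bound construction loses only the stated relative error when neither term of $Z$ dominates.

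Two smaller points that would matter if you tried to complete the sketch. First, one does not need to solve the saddle-point equation $\log x=\sum_{p\le y}\log p/(p^\sigma-1)$ exactly; the standard proofs take an explicit near-optimal choice such as $\sigma=\min\bigl(1,\log(1+y/\log x)/\log y\bigr)$, which already reproduces both terms of $Z$ and avoids the delicate implicit analysis your plan calls for. Second, for the error quality claimed here the prime number theorem with any classical error term is more than enough (only relative accuracy $O(1/\log y)$ in $\theta(y)$ and in Mertens-type sums is needed), so that is not where the difficulty lies; the difficulty is the uniform bookkeeping through $y\asymp\log x$, which your proposal leaves untouched.
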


In particular, we have, see~\cite[Equation~(1.14)]{HT}:

\begin{cor}\label{smooth1} For any $\alpha>1$ we have
$$\Psi(x,(\log x)^\alpha)=x^{1-1/\alpha+o(1)}\quad(x\to\infty).$$
\end{cor}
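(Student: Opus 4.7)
The plan is to substitute $y=(\log x)^{\alpha}$ directly into the formula for $Z$ in Lemma~\ref{lem:smoothmain} and carry out a routine asymptotic expansion, exploiting the assumption $\alpha>1$ to determine which of the two summands in $Z$ dominates.

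First I would record the two basic substitutions: with $y=(\log x)^{\alpha}$ one has $\log y=\alpha\log\log x$, and the key ratio is $y/\log x=(\log x)^{\alpha-1}$, which tends to infinity precisely because $\alpha>1$. Consequently $\log x/y=(\log x)^{1-\alpha}\to 0$.

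Next I would evaluate the two summands of $Z$ separately. For the first summand,
$$\frac{\log x}{\log y}\log\!\left(1+\frac{y}{\log x}\right)=\frac{\log x}{\alpha\log\log x}\,\log\!\bigl(1+(\log x)^{\alpha-1}\bigr),$$
and since $\log(1+(\log x)^{\alpha-1})=(\alpha-1)\log\log x+O(1)$, this equals $(1-1/\alpha)\log x+O(\log x/\log\log x)$. For the second summand I would use $\log(1+u)=u+O(u^2)$ for small $u=(\log x)^{1-\alpha}$ to get
$$\frac{y}{\log y}\log\!\left(1+\frac{\log x}{y}\right)=\frac{(\log x)^{\alpha}}{\alpha\log\log x}\bigl((\log x)^{1-\alpha}+O((\log x)^{2-2\alpha})\bigr)=O\!\left(\frac{\log x}{\log\log x}\right).$$
Adding the two, $Z=(1-1/\alpha)\log x+O(\log x/\log\log x)=(1-1/\alpha)(1+o(1))\log x$.

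Finally I would combine this with Lemma~\ref{lem:smoothmain}: the error factor there is $1+O(1/\log y+1/\log\log x)=1+O(1/\log\log x)$, which is harmless against the main term $Z\asymp\log x$, so
$$\log\Psi(x,(\log x)^{\alpha})=(1-1/\alpha+o(1))\log x,$$
and exponentiating gives the claim. No step here looks hard; the only thing to keep an eye on is that $\alpha>1$ is used twice, once to ensure $y/\log x\to\infty$ so that the first logarithm is genuinely large (not $O(1)$), and once to ensure $\log x/y\to 0$ so the Taylor expansion in the second term is valid — for $\alpha\le 1$ both terms are of the same order and the asymptotic conclusion fails.
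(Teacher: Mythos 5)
Your computation is correct and is exactly the derivation the paper intends: the paper states the corollary as an immediate consequence of Lemma~\ref{lem:smoothmain} (citing the corresponding equation in Hildebrand--Tenenbaum), and your substitution $y=(\log x)^{\alpha}$ with the expansion $Z=(1-1/\alpha+o(1))\log x$ simply fills in the routine details, including the correct observation that the error factor $1+O(1/\log y+1/\log\log x)$ is negligible against $Z\asymp\log x$.
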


Moreover, for very small $y$ the asymptotic formula for $\Psi(x,y)$ is known,
see~\cite[Theorem~1.5]{HT}:

\begin{lem}\label{lem:smoothmain1} Uniformly for $2\le y\le(\log x)^{1/2}$,
we have
$$\Psi(x,y)=\frac1{\pi(y)!}\prod_{p_j\le y}\frac{\log x}{\log p_j}
\(1+O\(\frac{y^2}{(\log x)\log y}\)\).$$
\end{lem}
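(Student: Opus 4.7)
The plan is to parametrize $y$-smooth integers by their prime factorizations and reduce the problem to counting lattice points in a simplex. Setting $k=\pi(y)$ and letting $p_1<p_2<\cdots<p_k$ denote the primes up to $y$, the map $(a_1,\ldots,a_k)\mapsto\prod_{j} p_j^{a_j}$ is a bijection from $\ZZ_{\ge 0}^k$ onto the positive $y$-smooth integers, and the condition $n\le x$ becomes $\sum_{j}a_j\log p_j\le\log x$. Thus
$$
\Psi(x,y)\;=\;\#\bigl(T\cap\ZZ_{\ge 0}^k\bigr),\qquad
T=\Bigl\{\ab\in\RR_{\ge 0}^k:\sum_{j=1}^k a_j\log p_j\le\log x\Bigr\}.
$$
The $k$-dimensional volume of $T$ is exactly $(\log x)^{k}/\bigl(k!\prod_j\log p_j\bigr)$, which is precisely the proposed main term.

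Next I would estimate the discrepancy $|N(T)-\mathrm{vol}(T)|$, where $N(T)$ is the lattice count, by attaching to each lattice point $\ab$ the unit cube $\ab+[0,1)^k$. The union of these cubes differs from $T$ only in a thin neighbourhood of its boundary, which in turn is contained in a union of $k$ codimension-one simplices (one for the tilted face and one for each coordinate hyperplane). A standard slab estimate then gives
$$
|N(T)-\mathrm{vol}(T)|\;\ll\;\sum_{i=1}^{k}\frac{(\log x)^{k-1}}{(k-1)!\prod_{j\ne i}\log p_j}
\;=\;\mathrm{vol}(T)\cdot\frac{k}{\log x}\sum_{i=1}^{k}\log p_i.
$$
Using Chebyshev's estimate $\theta(y)=\sum_{p\le y}\log p\ll y$ together with $\pi(y)\ll y/\log y$, the relative error becomes $\ll y^{2}/((\log x)\log y)$. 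The hypothesis $y\le(\log x)^{1/2}$ is exactly what guarantees this is $o(1)$, so the volume genuinely dominates.

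The main obstacle is making the boundary step quantitative with the right implicit constants, since both the dimension $k=\pi(y)$ and the factors $\log p_j$ depend on $y$. The cleanest way I would organise the argument is inductively on $k$: peel off the largest prime via
$$
\Psi(x,y)\;=\;\sum_{0\le a\le \log x/\log p_k}\Psi(x/p_k^{a},p_{k-1}),
$$
apply the inductive hypothesis to each summand, and compare the resulting sum to the corresponding Euler--Maclaurin integral. Each stage contributes an error of order $\log p_i/\log x$ relative to the main term, and these combine additively over $i=1,\ldots,k$ to produce exactly the claimed bound $O\bigl(y^{2}/((\log x)\log y)\bigr)$. The restriction $y\le(\log x)^{1/2}$ is what keeps the accumulated error below the main term at every step of the induction.
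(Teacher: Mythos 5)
The paper itself gives no proof of this lemma: it is quoted verbatim from Hildebrand and Tenenbaum (Theorem~1.5 of \cite{HT}), so there is no internal argument to compare yours with. Your proposal is the standard proof of that classical result: with $k=\pi(y)$, identify $\Psi(x,y)$ with the number of points of $\ZZ_{\ge0}^{k}$ in the simplex $T=\{\sum_j a_j\log p_j\le\log x\}$, whose volume is exactly the stated main term, and then control the boundary. The outline is sound, and the cleanest quantitative form of the boundary step is the sandwich you almost state rather than the ``slab estimate'': since $t\mapsto(\lfloor t_1\rfloor,\ldots,\lfloor t_k\rfloor)$ sends $T$ into its lattice points, and each unit cube attached to a counted lattice point lies in the simplex with $\log x$ replaced by $\log x+\theta(y)$, one gets $\mathrm{vol}(T)\le\Psi(x,y)\le\mathrm{vol}(T)\,(1+\theta(y)/\log x)^{k}$; since $k\theta(y)/\log x\ll y^{2}/((\log x)\log y)$ is bounded for $y\le(\log x)^{1/2}$, the relative error is $O(y^{2}/((\log x)\log y))$, as claimed. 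The slab bound as you wrote it compares a counting discrepancy with $(k-1)$-dimensional volumes and needs a Davenport/Lipschitz-type lemma (with attention to the fact that both $k$ and the edge lengths vary with $y$) or precisely this sandwich to be legitimate; once justified, it gives the same answer.

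One bookkeeping slip in your concluding inductive organisation: you assert that peeling off $p_i$ contributes a relative error of order $\log p_i/\log x$ and that these add up to the claimed bound. They do not: $\sum_i\log p_i/\log x=\theta(y)/\log x\ll y/\log x$, which is smaller than the target by a factor of roughly $\pi(y)$ and contradicts your own earlier (correct) computation, in which the face attached to $p_i$ has relative size $k\log p_i/\log x$. In the induction the per-stage errors are relative to lower-dimensional counts and are amplified by the growing dimension, so the correct total is $\asymp\pi(y)\theta(y)/\log x\asymp y^{2}/((\log x)\log y)$. With that correction (or simply with the sandwich argument above) the proof goes through.
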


Therefore:

\begin{cor}\label{smooth1a} For any $s\in\NN$
and $x\ge\exp(p_s^2)$ we have
$$\Psi(x,p_s)\ll(\log x)^s.$$
\end{cor}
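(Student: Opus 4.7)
The plan is to deduce Corollary~\ref{smooth1a} as an essentially one-line specialization of Lemma~\ref{lem:smoothmain1}. Setting $y = p_s$ in that lemma, the admissibility range $2 \le y \le (\log x)^{1/2}$ becomes $p_s^2 \le \log x$, i.e.\ $x \ge \exp(p_s^2)$, which is exactly the hypothesis of the corollary. Under this assumption the internal error term satisfies
\[
\frac{y^2}{(\log x)\log y} = \frac{p_s^2}{(\log x)\log p_s} \le \frac{1}{\log p_s} \le \frac{1}{\log 2},
\]
so the factor $(1 + O(\,\cdot\,))$ appearing in Lemma~\ref{lem:smoothmain1} is bounded by an absolute constant.

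Next, since $\pi(p_s) = s$, the lemma therefore yields
\[
\Psi(x, p_s) \ll \frac{1}{s!} \prod_{j=1}^{s} \frac{\log x}{\log p_j} = \frac{(\log x)^s}{s! \prod_{j=1}^{s} \log p_j}.
\]
Using the trivial lower bound $\log p_j \ge \log 2$ for every $j \ge 1$, this gives
\[
\Psi(x, p_s) \ll \frac{(\log x)^s}{s! (\log 2)^s}.
\]
Since the sequence $1/(s!(\log 2)^s)$ is bounded uniformly in $s \in \NN$ (it in fact tends to $0$ as $s \to \infty$, because the factorial dominates the geometric growth), we conclude $\Psi(x, p_s) \ll (\log x)^s$.

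The only thing to verify carefully is that the implied constant in Lemma~\ref{lem:smoothmain1} may be taken absolute (i.e.\ independent of $s$), so that the final bound does not silently pick up an $s$-dependence through the $O$-term. This is standard in the Hildebrand--Tenenbaum framework of~\cite[Theorem~1.5]{HT}, so there is no real obstacle; the entire argument is a direct substitution followed by the elementary estimate above.
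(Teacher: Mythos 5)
Your proof is correct and follows exactly the paper's route: the paper presents Corollary~\ref{smooth1a} as an immediate consequence of Lemma~\ref{lem:smoothmain1} (the specialization $y=p_s$, for which the hypothesis $x\ge\exp(p_s^2)$ is precisely the admissibility condition $p_s\le(\log x)^{1/2}$). Your additional verification that the constant can even be taken absolute, via the bound $1/(s!(\log 2)^s)$, is a harmless refinement; the paper only needs (and later uses) the weaker $\ll_s$ form.
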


For large $y$ we know the following estimate for $\Psi(x,y)$,
see~\cite[Corollary~1.3]{HT}:

\begin{lem}\label{lem:smoothmain2} Let $x\ge y\ge 2$ and $u=(\log x)/\log y$.
For any fixed $\eps>0$ we have
$$\Psi(x,y)=xu^{-(1+o(1))u},$$
as $y$ and $u$ tend to infinity, uniformly in the range
$y\ge(\log x)^{1+\eps}$.
\end{lem}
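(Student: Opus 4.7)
The plan is to reduce the lemma to the Dickman-function approximation for $\Psi(x,y)$ valid in a wide range of $y$, and then to apply the classical Dickman estimate for $\rho(u)$. First, I would invoke Hildebrand's theorem, which states that in the range $y \ge \exp((\log\log x)^{5/3+\eps})$---a fortiori for $y \ge (\log x)^{1+\eps}$---one has
\[
\Psi(x,y) = x\rho(u)\(1 + O_\eps\(\tfrac{\log(u+1)}{\log y}\)\),
\]
where $\rho$ is the Dickman function, defined by $\rho(u)=1$ on $[0,1]$ and $u\rho'(u) = -\rho(u-1)$ for $u > 1$. In our range $\log y \ge (1+\eps)\log\log x$ while $\log(u+1) = O(\log\log x)$, so the multiplicative error term is $o(1)$ as $u \to \infty$.

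Second, I would use Dickman's classical asymptotic
\[
\rho(u) = u^{-(1+o(1))u}\qquad(u\to\infty),
\]
obtainable either by iterating the delay-differential equation for $\rho$, or by a saddle-point analysis of its Laplace transform. Combining with the previous step yields $\Psi(x,y) = x\rho(u)(1+o(1)) = xu^{-(1+o(1))u}$, as claimed.

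The substantive obstacle is Hildebrand's theorem itself, whose proof rests either on Buchstab's functional equation together with an induction on $u$, or on a direct saddle-point analysis of the Dirichlet series of $y$-smooth numbers. In the present paper this is simply quoted from \cite[Corollary~1.3]{HT}, so a full new proof is not required; by contrast the weaker Lemma~\ref{lem:smoothmain} is insufficient for this purpose, since its relative error of order $Z\cdot(1/\log y + 1/\log\log x)$ dominates the correction of order $u\log u$ that we wish to isolate.
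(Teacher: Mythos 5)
There is a genuine gap, and it sits exactly where the lemma is actually used. Your ``a fortiori'' reverses the inclusion of ranges: since $(\log\log x)^{5/3+\eps}\ge(1+\eps)\log\log x$ for large $x$, the hypothesis $y\ge\exp((\log\log x)^{5/3+\eps})$ of Hildebrand's theorem (Lemma~\ref{lem:smoothmain3}) is \emph{stronger} than $y\ge(\log x)^{1+\eps}$, not weaker. So your reduction proves the estimate only for $y\ge\exp((\log\log x)^{5/3+\eps})$ and says nothing about the sub-range $(\log x)^{1+\eps}\le y<\exp((\log\log x)^{5/3+\eps})$, where the Dickman asymptotic is not available unconditionally (already its validity for $y\ge(\log x)^{2+\eps}$ is equivalent to RH). That missing sub-range is precisely the one the paper needs: in the proof of Lemma~\ref{K2K3} the present lemma is applied with $y=q=(1+o(1))\log p$ and $\log x\le(\log p)^{0.4}$, so $\log y\asymp\log\log x$, far below $(\log\log x)^{5/3}$. (The same slip shows in your claim that $\log(u+1)/\log y=o(1)$: for $y$ near $(\log x)^{1+\eps}$ this ratio is only bounded.) The paper itself offers no proof: the statement is quoted as \cite[Corollary~1.3]{HT}, and its proof there rests on the Hildebrand--Tenenbaum saddle-point method, not on a reduction to $\rho$.

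Your closing dismissal of Lemma~\ref{lem:smoothmain} is also only half right: it is indeed useless in the upper range (there the error $Z(1/\log y+1/\log\log x)$ can swamp $u\log u$), but in the missing sub-range it does the job. For $(\log x)^{1+\eps}\le y\le\exp((\log\log x)^{5/3})$ one has $Z=u(\log y-\log\log x+1)+O(u(\log x)^{-\eps})$, hence $\log x-Z=u\log u+O(u\log\log y)+O(u)=(1+o(1))u\log u$, because $\log u=\log\log x-\log\log y\sim\log\log x$; meanwhile the error term of Lemma~\ref{lem:smoothmain} contributes only $O(Z(1/\log y+1/\log\log x))\ll u\log y/\log\log x\ll u(\log\log x)^{2/3}=o(u\log u)$. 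So a correct self-contained derivation splits the range at $\exp((\log\log x)^{5/3+\eps})$, using Lemma~\ref{lem:smoothmain} below that threshold and your Dickman argument above it; alternatively one simply cites \cite[Corollary~1.3]{HT}, as the paper does.
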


Moreover, in a smaller range an asymptotic formula for $\Psi(x,y)$ is known, 
see~\cite[Theorem~1.1 and Corollary~2.3]{HT}.

\begin{lem}\label{lem:smoothmain3}
Let $x\ge y\ge 2$ and $u=(\log x)/\log y$.
For any fixed $\eps>0$ we have
$$\Psi(x,y)=x\rho(u)\(1+O\(\frac{\log (1+ u)}{\log y}\)\),$$
uniformly in the range
$$
y\ge\exp\((\log\log x)^{(5/3)+\eps}\),
$$
where
$$\rho(u)=\exp(-u(\log u+\log\log (u+2))+O(1)).$$
is the Dickman function.
\end{lem}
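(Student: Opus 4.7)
The plan is to follow the Hildebrand--Tenenbaum saddle-point method. The starting point is the partial zeta function
$$\zeta(s,y) = \prod_{p\le y}(1-p^{-s})^{-1},\qquad \Re s > 0,$$
together with a truncated Perron-type contour integral representation of $\Psi(x,y)$. Choose the saddle point $\alpha = \alpha(x,y)\in(0,1)$ as the unique positive solution of
$$\sum_{p\le y}\frac{\log p}{p^\alpha - 1} = \log x,$$
so that the logarithmic derivative of $\zeta(s,y)\,x^s$ in the real direction vanishes at $s=\alpha$. A routine analysis using the prime number theorem (with classical error term) gives $(1-\alpha)\log y = \xi(u) + O\bigl(\log\log(u+2)/\log u\bigr)$, where $\xi = \xi(u)$ is defined implicitly by $e^{\xi}-1 = u\xi$, and identifies this value of $\alpha$ with the saddle point of the corresponding Laplace representation of the Dickman function $\rho$.

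Next, shift the contour to $\Re s = \alpha$ and apply the saddle-point (i.e.\ Gaussian) approximation on a short central arc $|t|\le T_0$. This yields a main term of the shape
$$\Psi(x,y) = \frac{x^\alpha\,\zeta(\alpha,y)}{\alpha\,\sigma\sqrt{2\pi}}\Bigl(1 + O\bigl(\log(1+u)/\log y\bigr)\Bigr),\qquad \sigma^2 = \sum_{p\le y}\frac{(\log p)^2\,p^\alpha}{(p^\alpha-1)^2}.$$
To rewrite this as $x\rho(u)(1+O(\log(1+u)/\log y))$, match it against the saddle-point evaluation of
$$\rho(u) = \frac{1}{2\pi i}\int_{c-i\infty}^{c+i\infty} e^{us - I(s)}\,ds,\qquad I(s)=\int_0^s \frac{e^t-1}{t}\,dt,$$
combined with the Mertens-type expansion $\log \zeta(\alpha,y) = I\bigl((1-\alpha)\log y\bigr) + O(1/\log y)$ obtained from partial summation against the prime number theorem. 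Because both saddle-point evaluations are taken at the same $\alpha$, the exponential factors cancel and the remaining prefactors combine cleanly into $x\rho(u)$.

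The main obstacle will be the off-central estimation of $|\zeta(\alpha+it,y)|$ for $T_0 \le |t| \le T$, which is what has to be controlled so that the tail of the contour integral is dominated by the Gaussian main term. Via the explicit formula, this reduces to bounding sums of the form $\sum_{p\le y}(\log p)\cos(t\log p)$, which in turn requires a zero-free region for the Riemann zeta function. The Vinogradov--Korobov zero-free region is exactly strong enough to push the range of uniform validity down to $y \ge \exp\bigl((\log\log x)^{5/3+\eps}\bigr)$; anything weaker would yield only the cruder range $y \ge (\log x)^{1+\eps}$ covered by Lemma~\ref{lem:smoothmain2}. Thus the tail estimate is both the technical heart of the proof and the source of the exponent $5/3$.
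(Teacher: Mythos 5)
The paper does not prove this lemma at all: it is quoted verbatim from the literature (it is Hildebrand's theorem, cited here as \cite[Theorem~1.1 and Corollary~2.3]{HT}), so there is no internal proof to compare yours with. Your sketch is essentially a reconstruction of one of the two known routes (the Hildebrand--Tenenbaum saddle-point route; Hildebrand's original proof instead runs an induction on $u$ through the functional equation $\Psi(x,y)\log x=\int_1^x\Psi(t,y)\,dt/t+\sum_{p^m\le x,\,p\le y}\Psi(x/p^m,y)\log p$ and its analogue for $\rho$). As a plan it is viable, but two points are off. First, you misplace where the Vinogradov--Korobov input and hence the exponent $5/3$ enter. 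The saddle-point formula $\Psi(x,y)=x^{\alpha}\zeta(\alpha,y)\,(\alpha\sigma\sqrt{2\pi})^{-1}(1+o(1))$, including the off-central estimation of $|\zeta(\alpha+it,y)|$, is proved uniformly for all $2\le y\le x$ by elementary bounds on $|\zeta(\alpha+it,y)/\zeta(\alpha,y)|$; no zero-free region is needed there, and that part is not the source of the range restriction. The zero-free region (via the prime number theorem with error $O\bigl(y\exp(-c(\log y)^{3/5-\eps})\bigr)$) is needed in the step you treat as routine: estimating the prime sums that define $\alpha$ and $\zeta(\alpha,y)$, i.e.\ showing $(1-\alpha)\log y=\xi(u)+\cdots$ and comparing $\log\zeta(\alpha,y)$ with the Laplace transform of $\rho$. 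The error terms there are of the shape $u\cdot\exp(-c(\log y)^{3/5-\eps})$-type quantities, and forcing them to be $O(\log(1+u)/\log y)$ is exactly what imposes $u\le\exp((\log y)^{3/5-\eps})$, equivalently $y\ge\exp((\log\log x)^{5/3+\eps})$.

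Second, the stated expansion $\log\zeta(\alpha,y)=I((1-\alpha)\log y)+O(1/\log y)$ cannot be correct as written: at $\alpha=1$ Mertens' theorem gives $\log\zeta(1,y)=\log\log y+\gamma+o(1)$ while $I(0)=0$. The correct comparison is $\zeta(\alpha,y)=e^{\gamma}\log y\cdot\hat\rho\bigl(-(1-\alpha)\log y\bigr)\,(1+\text{error})$ with $\hat\rho(-s)=\exp(\gamma+I(s))$, and the extra factor $e^{\gamma}\log y$ is precisely what is needed, together with the saddle-point evaluation $\rho(u)=\sqrt{\xi'(u)/2\pi}\exp(\gamma-u\xi(u)+I(\xi(u)))(1+O(1/u))$, to make the prefactors ``combine cleanly into $x\rho(u)$.'' So the cancellation you assert does occur, but only after these corrections, and the error bookkeeping in that comparison (not the contour tail) is the technical heart of the proof.
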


%
\section{On solutions of systems of congruences}
%

\subsection{Discrepancy and exponential sums}

We recall the notion of {\it discrepancy\/}
which for a sequence of $H$ points
\begin{equation}
\label{eq:Seq}
\Gamma = \(\gamma_{1,x}, \ldots\,,
\gamma_{m,x} \)_{x=1}^H
\end{equation}
in the $m$-dimensional unit cube,
is defined as
$$
\Delta_\Gamma = \sup_{B \subseteq [0,1)^m}
\left|\frac{T_\Gamma(B)} {H} - |B|\right|,
$$
where $T_\Gamma(B)$ is the number of points of the sequence $\Gamma$
in  the box
$$
B = [\alpha_1, \beta_1) \times \ldots \times [\alpha_{m}, \beta_{m})
\subseteq [0,1)^m
$$
of volume $|B|$
and the supremum is taken over all such boxes.

A link between the discrepancy and exponential sums has been
established independently by Koksma~\cite{Kok}
and Sz\"usz~\cite{Sz}, see also~\cite[Theorem~1.21]{DrTi}.

\begin{lem}
\label{lem:K-S} For any
integer $L > 1$ and  sequence $\Gamma$ of $H$ points~\eqref{eq:Seq}
 the following bound holds
$$
\Delta_\Gamma  \ll_m  \frac{1}{L}
+ \frac{1}{H} \sum_{\substack{\lambda_1, \ldots \lambda_m \in \ZZ\\
 0 < |\lambda_1| + \ldots + |\lambda_m| \le L}}
 \prod_{j=1}^m \frac{1}{|\lambda_j| + 1}
\left| \sum_{x=1}^H \exp \(2 \pi i\sum_{j=1}^{m}\lambda_j\gamma_{j,x} \)
\right|.
$$
\end{lem}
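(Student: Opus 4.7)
The plan is to establish the bound via the classical Vinogradov--Erd\H{o}s--Tur\'an--Koksma approach of sandwiching the indicator of a box by trigonometric polynomials of bounded degree, then summing over the points of $\Gamma$.

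First, I would invoke Vinogradov's one-dimensional approximation lemma: for each coordinate $j$ and the interval $[\alpha_j,\beta_j)\subseteq [0,1)$, there exist trigonometric polynomials $\psi_j^{\pm}(t)=\sum_{|\lambda|\le L}a^{\pm}_{\lambda,j}\,e(\lambda t)$ of degree at most $L$ satisfying $\psi_j^{-}(t)\le\chi_{[\alpha_j,\beta_j)}(t)\le\psi_j^{+}(t)$ on $\RR/\ZZ$, with $a^{\pm}_{0,j}=(\beta_j-\alpha_j)+O(1/L)$ and $|a^{\pm}_{\lambda,j}|\ll 1/(|\lambda|+1)$ for $\lambda\neq 0$. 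One also wants $\psi_j^{\pm}\ge 0$ (this can be arranged), so that products respect the sandwich inequality.

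Second, take the tensor products
$$
\Psi^{\pm}(\xx)=\prod_{j=1}^{m}\psi_j^{\pm}(x_j)=\sum_{\substack{\blam\in\ZZ^m\\ \|\blam\|_\infty\le L}} c^{\pm}_{\blam}\,e(\blam\cdot\xx),
$$
which give $\Psi^{-}\le\chi_B\le\Psi^{+}$ on $[0,1)^m$ with $c^{\pm}_{\mathbf 0}=|B|+O_m(1/L)$ and $|c^{\pm}_{\blam}|\le\prod_j|a^{\pm}_{\lambda_j,j}|\ll_m\prod_j 1/(|\lambda_j|+1)$ for $\blam\ne \mathbf 0$. Averaging the sandwich over the points of $\Gamma$ gives
$$
\frac{T_\Gamma(B)}{H}-|B|\le\frac{1}{H}\sum_{x=1}^{H}\Psi^{+}(\gamma_{1,x},\dots,\gamma_{m,x})-|B|,
$$
and symmetrically from below with $\Psi^{-}$. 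Substituting the Fourier expansion, the constant term contributes $|B|+O_m(1/L)$, which cancels $|B|$ up to the claimed $1/L$ term, while the $\blam\ne\mathbf 0$ terms contribute
$$
\sum_{\substack{\blam\in\ZZ^m\\ 0<\|\blam\|_\infty\le L}} c^{\pm}_{\blam}\cdot\frac{1}{H}\sum_{x=1}^H e(\blam\cdot(\gamma_{1,x},\dots,\gamma_{m,x})).
$$
Passing to absolute values and using $|c^{\pm}_{\blam}|\ll_m\prod_j 1/(|\lambda_j|+1)$, and observing that terms with some $\lambda_j=0$ are covered since the product $1/(|\lambda_j|+1)=1$ there, I obtain the sum over $0<|\lambda_1|+\dots+|\lambda_m|\le mL$, which after rescaling $L$ by the factor $m$ yields the stated inequality.

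The main technical obstacle is the one-dimensional Vinogradov construction: producing non-negative trigonometric majorants/minorants of a characteristic function of an interval with the correct Fourier decay $1/(|\lambda|+1)$. This is a standard Beurling--Selberg-type extremal function argument, but it is the substantive analytic input; once it is in hand, the multi-dimensional extension is purely algebraic via the tensor-product expansion and the triangle inequality.
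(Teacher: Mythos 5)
The paper itself does not prove this lemma: it is the Koksma--Sz\"usz (Erd\H{o}s--Tur\'an--Koksma) inequality, quoted from the literature (Koksma, Sz\"usz; see also Drmota--Tichy, Theorem~1.21), so your argument must stand on its own. Your strategy is the standard one --- one-dimensional trigonometric majorants/minorants of interval indicators with Fourier decay $1/(|\lambda|+1)$, tensoring, averaging over the points, and rescaling $L$ by $m$ --- and the majorant half is sound: since $\psi_j^{+}\ge\chi_{[\alpha_j,\beta_j)}\ge 0$ automatically, the product bound $\chi_B\le\prod_j\psi_j^{+}$ is legitimate, and your bookkeeping of the constant term and of the coefficients is correct.

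The genuine gap is the minorant half in dimension $m\ge 2$. You claim the $\psi_j^{-}$ can also be taken nonnegative ``so that products respect the sandwich inequality'', but that is impossible: a minorant of $\chi_{[\alpha_j,\beta_j)}$ must be $\le 0$ off the interval, so a nonnegative trigonometric polynomial minorant would vanish on a set with nonempty interior and hence vanish identically, contradicting constant term $(\beta_j-\alpha_j)+O(1/L)$. Without nonnegativity, $\prod_j\psi_j^{-}$ need not minorize $\chi_B$ (at a point where exactly two coordinates fall outside their intervals, two negative factors make the product positive while $\chi_B=0$), so the lower bound on $T_\Gamma(B)/H$ is not established. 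The missing idea is the standard correction term (or, equivalently, Vaaler's multidimensional construction, or induction on $m$): from
$$
\prod_{j}\psi_j^{+}-\prod_{j}\chi_j=\sum_{k}\Bigl(\prod_{j<k}\chi_j\Bigr)\bigl(\psi_k^{+}-\chi_k\bigr)\Bigl(\prod_{j>k}\psi_j^{+}\Bigr)\le\sum_{k}\bigl(\psi_k^{+}-\psi_k^{-}\bigr)\prod_{j\ne k}\psi_j^{+}
$$
one gets $\chi_B\ge\prod_j\psi_j^{+}-\sum_k(\psi_k^{+}-\psi_k^{-})\prod_{j\ne k}\psi_j^{+}$, which is again a trigonometric polynomial of degree $\le L$ in each variable; its nonzero coefficients are still $\ll_m\prod_j 1/(|\lambda_j|+1)$ because each factor involves a single variable, and its constant term is $|B|+O_m(1/L)$ because $\psi_k^{+}-\psi_k^{-}\ge 0$ has mean $O(1/L)$ and the majorants are uniformly bounded. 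With this (or an equivalent device) inserted, the rest of your computation goes through and yields the stated bound.
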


\subsection{The proof of Theorem~\ref{thm3} }

Let
$$d=(k_1-k_2,p-1) \mand t=(p-1)/d.$$
If $(k_1-k_2,p-1)<p^{1-\eps/2}$ then the result follows from
Corollary~\ref{asymcor}. So we assume that
$$
d\geq p^{1-\eps/2}.
$$
Write $x=y^tz$. For any $z\in\mathbb F_p^*$ we denote
$$I_z=\{y\in\mathbb F_p^*: y^tz\in I\}.$$
Then
\be\label{identI}
\# I=\frac1{p-1}\sum_{z\in\mathbb F_p^*} \# I_z.
\ee
Fix $z$. Since $y^{k_1t}\equiv y^{k_2t} \pmod p$, the condition $x\in I$ can
be written as
\be\label{zcond}
a_jy^{k_1t}z^{k_j}\equiv l_j+n_j \pmod p,\,n_j\in[1,N_j],\,j=1,2.
\ee
Now we denote
$$\cZ=\{z\in\Fp^*~:~\|a_2z^{k_2-k_1}/a_1\|>\Delta\}.$$
To get a lower estimate for $\# I$ we take a sum over only $z\in Z$.

First we estimate $\# \cZ$. The multiset $\{z^{k_1-k_2}\}$
is the subgroup $G$ of $\Fp^*$ of order $t$, and each element
of $G$ has multiplicity $d$. Therefore,
\be\label{est|Z|}
\# \cZ=\(1-t^{-1}|V(\Delta,t,a_2/a_1)|\)(p-1).
\ee

Next, we   estimate $\# I_z$ for $z\in \cZ$.
We note that  the condition $z\in Z$ implies that
$$\lambda_1a_1+\lambda_2a_2z^{k_2-k_1}\not \equiv 0 \pmod p$$
for $0<\max(|\lambda_1|,|\lambda_2|)\le\Delta$. By Lemma~\ref{lem:ExpSum}
we have
$$\left|\sum_{y\in\Fp^*}
\exp\(\frac{2 \pi i}{p}(y^{k_1t}z^{k_1}(\lambda_1a_1+\lambda_2a_2z^{k_2-k_1})\)
\right|\le p^{1-\delta}$$
for some  $\delta>0$ that depends only on $\eps$.
Thus by Lemma~\ref{lem:K-S}, applied with $m=2$ and $L = \Delta$, we obtain
$$
\# I_z = \frac{N_1N_2}{p} + O\(p/\Delta + p^{1-\delta}
 \(\log\Delta\)^2\).
$$
We observe that for $\eta\le\delta/2$ and $\Delta\le p^\eta$ we have
$$p^{1-\delta}\(\log\Delta\)^2 \ll p/\Delta.$$
Recalling~\eqref{identI} and the bound~\eqref{est|Z|}
we complete the proof.

\subsection{Applications of Theorem~\ref{thm3}: some examples }

We consider the cases when $N_1$, $N_2$ are comparable to $p$. In our examples
we use notation from the statement of Theorem~\ref{thm3} .

\begin{cor}\label{example1} For any $A_1>0$, $A_2>0$ there exists some $B$
such that if
$$
N_1\ge A_1p,\qquad N_2\ge A_1p,\qquad t\le\frac{A_2\log p}{\log\log p}
$$
then
$$\# I\geq\frac{N_1N_2}p\(1-\frac Bt\).$$
\end{cor}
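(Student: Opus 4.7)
The plan is to apply Theorem~\ref{thm3} with $\Delta$ chosen as a large constant multiple of $t$, and to control $\max_{a\in\Fp^*}\# V(\Delta,t,a)$ by invoking Theorem~\ref{ratheight3} with $s=1$. Since $t\le A_2\log p/\log\log p$, the choice $\Delta=\lceil Bt\rceil$ (with $B$ a constant to be fixed at the end depending on $A_1,A_2$) automatically satisfies $\Delta\le p^{\eta}$ for $p$ large enough, so Theorem~\ref{thm3} applies.

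The key step is estimating $\# V(\Delta,t,a)$. With $\log\Delta=O(\log\log p)$, the parameter $r=r_2(\Delta)$ satisfies $r\gg\log p/\log\log p$, and in particular $t/(r+1)\ll A_2$. Since $\tPh(\Delta^2,0)=1$, Theorem~\ref{ratheight3} with $s=1$ then yields
$$\max_{a\in\Fp^*}\# V(\Delta,t,a)\le\max\!\bigl(2,\,\tfrac{t}{r+1}\bigr)\le M_0,$$
where $M_0$ is a constant depending only on $A_2$.

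Plugging this into Theorem~\ref{thm3} and expanding gives
$$\# I\ge\frac{N_1N_2}{p}-\frac{N_1N_2}{p}\cdot\frac{M_0}{t}-\frac{Cp}{\Delta}.$$
Using $N_1N_2\ge A_1^2p^2$ and $\Delta\ge Bt$, the error $Cp/\Delta$ can be absorbed into the main term as
$$\frac{Cp}{\Delta}\le\frac{N_1N_2}{pt}\cdot\frac{C}{BA_1^2},$$
so that
$$\# I\ge\frac{N_1N_2}{p}\left(1-\frac{M_0+C/(BA_1^2)}{t}\right).$$
Finally, choose $B$ large enough, depending on $A_1$ and $A_2$, so that $M_0+C/(BA_1^2)\le B$, which completes the proof.

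The main obstacle is Step 2, namely showing that $\# V(\Delta,t,a)$ is bounded by a constant in the regime $\Delta\asymp t\ll\log p/\log\log p$. This is precisely the content of Theorem~\ref{ratheight3} with $s=1$, exploiting that $r_2(\Delta)$ is then essentially as large as $\log p/\log\log p$, so the factor $\binom{r+s}{s}^{-1}t$ reduces to an absolute constant depending only on $A_2$.
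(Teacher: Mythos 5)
Your argument is correct and takes essentially the same route as the paper: both proofs combine Theorem~\ref{thm3} with Theorem~\ref{ratheight3} applied with $s=1$, using that $r_2(\Delta)\gg\log p/\log\log p$ in this range so that $\max_{a}\#V(\Delta,t,a)$ is bounded by a constant depending only on $A_2$. The only (immaterial) difference is the choice of $\Delta$: the paper takes $\Delta=[(\log p)^2]$, which makes the term $Cp\Delta^{-1}$ negligible compared with $N_1N_2/(pt)$, whereas you take $\Delta\asymp Bt$ and absorb that term by choosing $B$ large.
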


Indeed, we assume that $p$ is large enough. We take $\Delta=[(\log p)^2]$.
To estimate $\#V(\Delta,t,a)$ we apply Theorem~\ref{ratheight3} with $s=1$.

\begin{cor}\label{example2}  For any $l\in\NN$, $A_1>0$, $A_2>0$ there exists
some $B$ such that if
$$
N_1\ge A_1p,\qquad N_2\ge A_1p,\qquad t\ge\frac{A_2(\log p)^l}{\log\log p}
$$
then
$$\# I\geq\frac{N_1N_2}p\(1-
\frac{B(\log\log p)^l}{(\log p)^l}\).$$
\end{cor}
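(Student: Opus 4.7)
The plan is to follow the same template as Corollary~\ref{example1}, but with a larger choice of the cutoff $\Delta$ and a higher-order application of Theorem~\ref{ratheight3}. Specifically, I would pick $\Delta = \fl{c(\log p)^l/(\log\log p)^l}$ for a small absolute constant $c$. This choice satisfies $\Delta \le p^\eta$ for $p$ large, so Theorem~\ref{thm3} applies, and the first factor in~\eqref{kon2} becomes
$$
\frac{N_1N_2}{p}-Cp\Delta^{-1} \ge \frac{N_1N_2}{p}\(1-\frac{Cp^2}{N_1N_2\Delta}\)\ge\frac{N_1N_2}{p}\(1-\frac{B_1(\log\log p)^l}{A_1^2(\log p)^l}\),
$$
using $N_1N_2\ge A_1^2p^2$.

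The core step is to bound $t^{-1}\max_{a}\#V(\Delta,t,a)$ by applying Theorem~\ref{ratheight3} with $k=\Delta$ and $s=l$. Since $\log\Delta\asymp\log\log p$, the parameter $r=r_2(\Delta)$ satisfies $r\asymp\log p/\log\log p$, so for fixed $l$
$$
\binom{r+l}{l}^{-1}\asymp\(\frac{\log\log p}{\log p}\)^l.
$$
For the other branch of the maximum, I invoke Corollary~\ref{smooth1a}: for each fixed $i\le l-1$ and $p$ sufficiently large (so $\Delta^2\ge\exp(p_i^2)$), one has $\Psi(\Delta^2,p_i)\ll(\log\Delta^2)^i\ll(\log\log p)^i$. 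Summing the $\tPh$-definition term-by-term yields $\tPh(\Delta^2,l-1)\ll(\log\log p)^{l-1}$, and dividing by the hypothesis $t\ge A_2(\log p)^l/\log\log p$ gives
$$
\frac{2\tPh(\Delta^2,l-1)}{t}\ll\frac{(\log\log p)^l}{A_2(\log p)^l}.
$$
Combining the two branches, $t^{-1}\max_{a}\#V(\Delta,t,a)\ll(\log\log p)^l/(\log p)^l$.

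Substituting both estimates into~\eqref{kon2} and applying the elementary inequality $(1-\alpha)(1-\beta)\ge 1-\alpha-\beta$ yields the stated bound, with $B$ absorbing the constants $C$, $A_1^{-2}$, $A_2^{-1}$, the factorial $l!$ entering the asymptotics of $\binom{r+l}{l}$, and the implied constants from Corollary~\ref{smooth1a}. I do not anticipate a genuine obstacle: the only delicate point is checking that the two branches of the max in Theorem~\ref{ratheight3} are \emph{simultaneously} of the desired order, which is exactly what the choice $s=l$ is calibrated to achieve. The bookkeeping forces $p$ to be large enough that $r\gg l$ and $\Delta^2\ge\exp(p_l^2)$, but both conditions are automatic for fixed $l$ and $p\to\infty$.
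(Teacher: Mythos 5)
Your argument is correct and is essentially the paper's own proof: apply Theorem~\ref{thm3} with a polylogarithmically sized $\Delta$ and bound $t^{-1}\max_a\#V(\Delta,t,a)$ via Theorem~\ref{ratheight3} with $s=l$, estimating $\tPh(\Delta^2,l-1)\ll_l(\log\log p)^{l-1}$ through Corollary~\ref{smooth1a}. The only difference is your choice $\Delta\asymp(\log p)^l/(\log\log p)^l$ in place of the paper's $\Delta=[(\log p)^{l+1}]$, which merely makes the $Cp\Delta^{-1}$ term comparable to (rather than negligible against) the target error and changes nothing essential.
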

Indeed, we assume that $p$ is large enough. We take $\Delta=[(\log p)^{l+1}]$.
To estimate $\#V(\Delta,t,a)$ we apply Theorem~\ref{ratheight3} with $s=l$
observing that, due to Corollary~\ref{smooth1a}, we have
$\tPh(x,s-1)\ll_s(\log x)^{s-1}$ for $x\ge2$.

If $t$ is large one can use the following estimate.

\begin{cor}\label{example3}
For any $\eta \in (0,1/2]$,  $\eps> 0$,  $A_1>0$, $A_2>0$ there exists some
$B$ such that if $t\le p^\eta$ and
$$N_1\ge A_1p,\quad N_2\ge A_1p$$
then
$$\# I\geq\frac{N_1N_2}p\(1-B\frac{\log p+t^{(1+\eps)/l}}t\)$$
where  $l=\fl{1/(2\eta)}$.
\end{cor}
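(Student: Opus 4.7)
The plan is to apply Theorem~\ref{thm3} with a suitable $\Delta\le p^\eta$ and to bound $\max_{a\in\Fp^*}\#V(\Delta,t,a)$ via Theorem~\ref{ratheight3} together with the smooth-number estimates of Corollary~\ref{smooth1a}. Combining Theorem~\ref{thm3} with the hypotheses $N_1,N_2\ge A_1p$, an elementary manipulation gives
\[
\#I\ge\frac{N_1N_2}{p}\left(1-\frac{C_1}{\Delta}-\frac{\max_{a}\#V(\Delta,t,a)}{t}\right)
\]
for some $C_1=C_1(A_1)$. Thus it is enough to choose $\Delta\le p^\eta$ so that both $1/\Delta$ and $\max_a\#V(\Delta,t,a)/t$ are $\ll(\log p+t^{(1+\eps)/l})/t$.

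To control $\max_a\#V(\Delta,t,a)$ I apply Theorem~\ref{ratheight3} with $s=l$, which gives
\[
\#V(\Delta,t,a)\le\max\left(2\tPh(\Delta^2,l-1),\binom{r_2(\Delta)+l}{l}^{-1}t\right).
\]
Corollary~\ref{smooth1a} (applied once $\Delta$ is larger than a threshold depending only on $l$) provides $\tPh(\Delta^2,l-1)\ll_l(\log\Delta)^{l-1}$. Consequently
\[
\frac{\max_{a}\#V(\Delta,t,a)}{t}\ll_l\max\left(\frac{(\log\Delta)^{l-1}}{t},\binom{r_2(\Delta)+l}{l}^{-1}\right).
\]

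The choice of $\Delta$. Since $r_2(\Delta)\asymp\log p/\log\Delta$ and $l=\fl{1/(2\eta)}$, I take $\Delta$ to be a small positive power of $p$ (of size roughly $p^{1/(8l+2)}$, which is $\le p^\eta$) so that $r_2(\Delta)$ is comparable to $l$; this makes $\binom{r_2(\Delta)+l}{l}$ large enough that $\binom{r_2(\Delta)+l}{l}^{-1}t\ll t^{(1+\eps)/l}$. Simultaneously $(\log\Delta)^{l-1}\ll_l(\log p)^{l-1}$ and $1/\Delta$ is polynomially small in $p$, both of which fit into the $(\log p)/t$ and $t^{(1+\eps)/l}/t$ parts of the target once the range of $t$ relative to $(\log p)^{l/(1+\eps)}$ is split appropriately.

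The main obstacle is precisely this final balance: all three quantities $1/\Delta$, $(\log\Delta)^{l-1}/t$, and $\binom{r_2(\Delta)+l}{l}^{-1}$ must be controlled simultaneously against the two-term target $(\log p+t^{(1+\eps)/l})/t$. The defining equation $l=\fl{1/(2\eta)}$ is what makes a $\Delta$ near $p^{1/(8l+2)}$ permissible under $\Delta\le p^\eta$, and the exponent $(1+\eps)/l$ in the target is exactly what absorbs the $l$-th power loss coming from the binomial coefficient in Theorem~\ref{ratheight3}, so tracking the $l$-, $\eps$- and $A_1$-dependent constants constitutes the bulk of the technical work.
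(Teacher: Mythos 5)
Your overall frame (apply Theorem~\ref{thm3}, then bound $\max_a\#V(\Delta,t,a)$) matches the paper, but the way you bound $\#V(\Delta,t,a)$ contains a genuine gap. With $\Delta$ a fixed power of $p$ (your $\Delta\approx p^{1/(8l+2)}$), the quantity $r_2(\Delta)=\fl{\log(p/2)/(8\log\Delta)-1/4}$ is bounded in terms of $l$ alone, so $\binom{r_2(\Delta)+l}{l}=O_l(1)$; hence the second branch of Theorem~\ref{ratheight3} only yields $\#V(\Delta,t,a)\ll_l t$, not $\ll t^{(1+\eps)/l}$. Your assertion that this choice makes the binomial coefficient ``large enough'' is false whenever $t$ is a power of $p$ and $l\ge2$, which is precisely the interesting range. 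The first error term is also out of control: for $t$ near $p^{\eta}$ with $\eta\in(1/(2l+2),1/(2l)]$, the relative error $C_1/\Delta\asymp p^{-1/(8l+2)}$ exceeds the target $(\log p+t^{(1+\eps)/l})/t\asymp p^{\eta((1+\eps)/l-1)}$; for instance $l=2$, $t=p^{1/4}$ gives $p^{-1/18}$ against roughly $p^{-1/8}$. Nor can this be repaired by another choice of $s$ and $\Delta$ inside your framework: to make $\binom{r_2(\Delta)+s}{s}\ge t^{1-(1+\eps)/l}$ when $t$ is a power of $p$ you need either $\log\Delta=o(\log p)$, which makes $p/\Delta$ far too large compared with the target, or $s$ so large that $\tPh(\Delta^2,s-1)\gg\Delta^{2-o(1)}$, which is again bigger than $t^{(1+\eps)/l}$ once $\Delta\gg t^{1-(1+\eps)/l}$ is imposed to control $p/\Delta$. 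So Theorem~\ref{ratheight3} plus Corollary~\ref{smooth1a} cannot produce the power saving $t^{(1+\eps)/l}$ in this regime.

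The paper's proof supplies exactly the ingredient that is missing here. It takes $\Delta=[t/2]$, so the term $Cp\Delta^{-1}\ll p/t$ is harmless relative to $N_1N_2/p\gg p$, and it bounds $\#V(\Delta,t,a)$ not through Theorem~\ref{ratheight3} but through product-set growth: identify $V(\Delta,t,a)$ with a set $\cA$ of rationals $u/v$ of height at most $\Delta$; the $l$-fold product set $\cA^{(l)}$ consists of rationals of height at most $\Delta^l\le\sqrt{p/2}$ (this is where $l=\fl{1/(2\eta)}$ and $t\le p^\eta$ enter), hence injects into $\Fp$ and lies in the coset $a^lG$, giving $\#\cA^{(l)}\le t$; on the other hand \cite[Corollary~3]{BKS} gives $\#\cA^{(l)}\gg\#\cA^{l-\eps}$. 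Combining these yields $\#V(\Delta,t,a)\ll t^{(1+\eps)/l}$, which is the true source of the exponent $(1+\eps)/l$ in the statement, and then Theorem~\ref{thm3} finishes the proof. In short: for large $t$ you need the multiplicative growth of product sets of Farey fractions in a coset, not the smooth-number counting of Theorem~\ref{ratheight3}.
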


\begin{proof} Again, we assume that $p$ is large enough. We take
$\Delta=[t/2]$ and denote
\begin{equation*}
\begin{split}
\AA=\{(u/v)~:~u\in\ZZ, \ v\in\NN, \ &|u|\le\Delta, \, v\le\Delta,\\
& \exists x\in aG\ vx\equiv u \pmod p\}.
\end{split}
\end{equation*}
By the definition of $V(\Delta, t,a)$, we can associate with any element
$x\in V(\Delta, t,a)$ a rational number $u/v\in\AA$ such that
$vx\equiv u \pmod p$. Thus,
\be\label{estAA}
\#\AA \le \# V(\Delta,t,a).
\ee
Actually, the equality in~\eqref{estAA} holds since $\Delta\le\sqrt{p/2}$.

Now consider the set
$$\AA^{(l)}=\{a_1\ldots a_l~:~a_1\ldots,a_l\in\AA\}.$$
By~\cite[Corollary~3]{BKS} we have
\be\label{lowestAAl}
\# \AA^{(l)} \gg \# \AA^{l-\eps}
\ee
where the implied constant in $\gg$ depends on $l$ and $\eps$.
On the other hand, any element of $\AA^{(l)}$ has the form $u/v$,
$u\in\ZZ$, $v\in\NN$, $|u|\le\Delta^l$, $v\le\Delta^l$. Since
$\Delta^l\le\sqrt{p/2}$, distinct element of $\AA^{(l)}$ are
distinct as elements of $\Fp$ as well. Considering $\AA^{(l)}$
as a subset of $\Fp$ we see that $\AA^{(l)}\subseteq a^lG$. Therefore,
\be\label{uppestAAl}
\# \AA^{(l)}\le t.
\ee
Inequalities~\eqref{estAA}, \eqref{lowestAAl} and~\eqref{uppestAAl} imply
$$\#V(\Delta,t,a)\ll t^{(1+\eps)/l}.$$
The corollary follows from this estimate and Theorem~\ref{thm3}.
\end{proof}

%
\section{Fixed points of the discrete logarithms}
%

\subsection{Preparations}

For any divisor $d$ of $p-1$ and $k=(p-1)/d$ we define the sets
$$X^*(k,p)=\left\{x~:~1\le x\le k,\ (x,k)=1,\,
x^k\equiv(-k)^k \pmod p\right\},$$
$$X(k,p)=\left\{x~:~1\le x\le k,\ 
x^k\equiv(-k)^k \pmod p\right\},$$
Let
$$T(d,p)=\# X^*(k,p).$$

It is known that
\be\label{expd}
F(p)=\sum_{d\mid p-1}dT(d,p),
\ee
see~\cite[Equation~(5)]{HM2}.

\begin{lem}\label{newX} We have
$$\sum_{h\mid k} T((p-1)/h,p)\le \# X(k,p).$$
\end{lem}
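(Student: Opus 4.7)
The plan is to exhibit an injection from the disjoint union $\bigsqcup_{h\mid k}X^*(h,p)$ into $X(k,p)$, whose existence immediately gives the inequality. First I would unpack the left-hand side: setting $d=(p-1)/h$ in the definition of $T(d,p)$ yields $T((p-1)/h,p)=\# X^*(h,p)$, so the claim is equivalent to $\sum_{h\mid k}\# X^*(h,p)\le\# X(k,p)$.

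For each divisor $h$ of $k$ I would define $\phi_h\colon X^*(h,p)\to X(k,p)$ by $\phi_h(x)=(k/h)\,x$. There are three things to check. For well-definedness, note that because $h\mid k$ the value $(k/h)x$ is an integer, and since $1\le x\le h$ it lies in $[1,k]$; raising $x^h\equiv(-h)^h\pmod p$ to the power $k/h$ yields $x^k\equiv(-h)^k\pmod p$, whence
$$
\phi_h(x)^k=(k/h)^kx^k\equiv(k/h)^k(-h)^k=(-k)^k\pmod p,
$$
so $\phi_h(x)\in X(k,p)$. Injectivity of $\phi_h$ is immediate since it is multiplication by the fixed nonzero integer $k/h$.

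The only step that requires a little thought, and the one I would handle last, is disjointness of the images as $h$ ranges over divisors of $k$. Suppose $\phi_{h_1}(x_1)=\phi_{h_2}(x_2)$ in $\ZZ$; then $x_1/h_1=x_2/h_2$ as positive rationals. The conditions $(x_1,h_1)=(x_2,h_2)=1$ built into the definition of $X^*$ guarantee that each side is already written in lowest terms, and the uniqueness of the reduced form forces $h_1=h_2$ and $x_1=x_2$. Summing $\# X^*(h,p)$ over $h\mid k$ then gives the desired inequality. The coprimality condition in the definition of $X^*$ is what makes the disjointness work, so I expect that to be the main (and only) conceptual obstacle; everything else is a one-line computation built on the identity $(x^h)^{k/h}=x^k$.
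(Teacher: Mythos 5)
Your proposal is correct and matches the paper's argument: the paper likewise maps $X^*(h,p)$ into $X(k,p)$ by multiplication by $k/h$, verifies membership via the same congruence computation, and gets disjointness from the coprimality condition (the paper recovers $h$ from the gcd $(x,k)=k/h$, which is the same idea as your lowest-terms uniqueness). No gaps.
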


\begin{proof}
For $h\mid k$ we define the set
$$X(k,h,p)=\frac{k}{h} X^*(h,p).$$
Clearly,
\be\label{sizeX(k,h,p)}
\# X(k,h,p) = \# X^*(h,p) =T((p-1)/h,p).
\ee
For $x\in X^*(h,p)$ we have
$$\(\frac{kx}{h}\)^k
=\(\frac{k}{h}\)^k \(x^{h}\)^{k/h}
\equiv\(\frac{k}{h}\)^k \((-h)^{h}\)^{k/h}
=(-k)^k\pmod p.
$$
Therefore, the sets $X(k,h,p)$ are subsets of $X(k,p)$. Moreover, these
sets are disjoint since for $x\in X(k,h,p)$ we have $(x,k)=h/k$.
Applying \eqref{sizeX(k,h,p)} we complete the proof.
\end{proof}

We   use the following estimate for $T(d,p)$,
see~\cite[Bound~(39)]{BKS} with $\nu=3$.

\begin{lem}\label{(39)} We have
$$T(d,p)\le\(d^{-4/3}p+(p/d)^{1/3}\)(p/d)^{o(1)}.$$
\end{lem}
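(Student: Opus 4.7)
The plan is to adapt the $\nu$-fold product technique of \cite{BKS} specialized to $\nu=3$. Write $N=T(d,p)$ and $k=(p-1)/d$, and observe that $X^*(k,p)\subseteq[1,k]$ sits inside a single coset $aG$ of the subgroup $G\subset\FF_p^*$ of $k$-th power residues, which has order $(p-1)/k=d$ (indeed $x^k\equiv(-k)^k \pmod p$ determines $x$ up to multiplication by a $k$-th root of unity).

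\textbf{Three-fold products.} For any ordered triple $(x_1,x_2,x_3)\in X^*(k,p)^3$, the integer $n=x_1x_2x_3$ lies in $[1,k^3]$ and, reduced modulo $p$, lies in the coset $a^3G$ of cardinality $d$. Hence the integer values of $n$ are drawn from the set of integers in $[1,k^3]$ whose residues modulo $p$ lie in $a^3G$, and a crude count gives at most $d\lceil k^3/p\rceil=O(dk^3/p+d)$ such integers. Each such $n$ admits at most $\tau_3(n)\le n^{o(1)}\le (p/d)^{o(1)}$ factorisations into three positive factors bounded by $k$. Combining,
\[
N^3 \;\le\; (p/d)^{o(1)}\cdot(dk^3/p+d)\;=\;(p/d)^{o(1)}\cdot(p^2/d^2+d),
\]
so $N \ll (p/d)^{o(1)}\bigl((p/d)^{2/3}+d^{1/3}\bigr)$.

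\textbf{Matching the claimed bound in the easy range.} In the regime $d\le p^{1/2}$ this already gives the stated inequality, since there $(p/d)^{2/3}\le d^{-4/3}p$ and $d^{1/3}\le (p/d)^{1/3}$.

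\textbf{Main obstacle.} The delicate range is $d>p^{1/2}$: the naive three-fold argument yields only $N\ll d^{1/3+o(1)}$, whereas the claim demands $N\ll(p/d)^{1/3+o(1)}$, which is genuinely stronger. Obtaining this requires a sharper multiplicative-energy argument, which is the heart of Bound~(39) of \cite{BKS}. Concretely, one considers the energy
\[
E_3=\#\bigl\{(x_1,\dots,x_6)\in X^*(k,p)^6\;:\;x_1x_2x_3\equiv x_4x_5x_6\!\!\!\pmod p\bigr\},
\]
lower-bounds it by $N^6/d$ via Cauchy--Schwarz (since the 3-fold product set has at most $d$ residues), and upper-bounds it by a divisor-style estimate combined with a careful enumeration of the integer equation $x_1x_2x_3-x_4x_5x_6=mp$ over the $O(k^3/p+1)$ relevant values of $m$. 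Arranging the bookkeeping so that the two terms $d^{-4/3}p$ and $(p/d)^{1/3}$ both emerge uniformly in $d$ is the technical core of \cite[Bound~(39)]{BKS}.
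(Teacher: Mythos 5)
There is a genuine gap, and it originates in a misidentification of the relevant subgroup. The solution set of $x^k\equiv(-k)^k\pmod p$ is a coset of the kernel of $x\mapsto x^k$ on $\FF_p^*$, i.e.\ of the group of $k$-th roots of unity, which has order $\gcd(k,p-1)=k=(p-1)/d$ --- exactly as your own parenthetical remark states. It is \emph{not} a coset of the group of $k$-th power residues of order $d$, which is what you then feed into the count. With the correct group, the products $x_1x_2x_3$ land in a coset of size $k$, so the number of admissible integers $n\in[1,k^3]$ is at most $k\lceil k^3/p\rceil\ll k^4/p+k$, and
$$
T(d,p)^3\;\le\;\Bigl(\frac{k^4}{p}+k\Bigr)\max_{n\le k^3}\tau_3(n)\;\le\;\Bigl(\frac{k^4}{p}+k\Bigr)(p/d)^{o(1)},
$$
whence, taking cube roots and using $k\le p/d$,
$$
T(d,p)\;\le\;\bigl(k^{4/3}p^{-1/3}+k^{1/3}\bigr)(p/d)^{o(1)}\;\le\;\bigl(d^{-4/3}p+(p/d)^{1/3}\bigr)(p/d)^{o(1)},
$$
which is the claim, uniformly in $d$, with no case distinction.

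Because you used the order $d$ instead of $k$, your three-fold count only yields $N\ll((p/d)^{2/3}+d^{1/3})(p/d)^{o(1)}$, which is indeed too weak for $d>p^{1/2}$; but the ``main obstacle'' you describe is an artifact of that miscount rather than a genuine feature of the problem. The multiplicative-energy argument you then sketch to bridge the gap is not actually carried out (no bound on $E_3$ is proved and no bookkeeping is done), so as written the proof is incomplete precisely in the range where the lemma is nontrivial. For the record, the paper does not prove this lemma itself but quotes it from \cite[Bound~(39)]{BKS}; the corrected three-fold product computation above is essentially that argument with $\nu=3$.
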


Following~\cite{BKS} we denote
$$D=p\exp\(-4\frac{\log p}{\log\log p}\)$$
(we always assume  that $p$ is
large enough).
Then we have
\be\label{smalld}
\sum_{\substack{d\mid p-1,\\d\le D}}dT(d,p)= p+o(p),
\ee
as $p\to\infty$,
see~\cite[Section~5]{BKS}.
It is  convenient for us to take a sum over $k$.
The identity~\eqref{expd} can be rewritten as
\be\label{expk}
F(p)=(p-1)\sum_{k\mid p-1}\frac{T((p-1)/k,p)}k.
\ee

We now define the sums
$$
S_p(K,L) = \sum_{\substack{k\mid p-1,\\K \ge k>L}}\frac{T((p-1)/k,p)}k.
$$
and denote
\begin{align*}
K_1&=\exp\(4\frac{\log p}{\log\log p}\),\\
K_2&=\exp\(\log p)^{0.4}\),\\
K_3&=\exp\((\log\log p)^7\) 
\end{align*}
So we see from~\eqref{expk} that
\be\label{eq: F and S}
\begin{split}
F(p)=(p-1)(S_p(p-1,K_1) + &S_p(K_1,K_2) \\
& + S_p(K_2,K_3) + S_p(K_3,0)).
\end{split}
\ee

Inequality~\eqref{smalld} immediately implies
\be\label{largek}
S_p(p-1,K_1) =1+o(1),
\ee
as $p\to\infty$.
So, to prove Theorem~\ref{Fupper} we have to estimate the sums 
$S_p(K_1,K_2)$, $S_p(K_2,K_3)$ and $S_p(K_3,0)$.

Our main tools are Theorem~\ref{intheight2} and estimates for sets of smooth
numbers. Besides, for the sum $S_p(K_3,0)$ we use some arguments 
which stem from functional analysis. 

\subsection{Preliminary estimates}

We recall the definitions~\eqref{eq:r0} and~\eqref{eq:s0}
of the functions $r_0(k)$ and $s_0(k,t)$.
By Lemma~\ref{newX}, Theorem~\ref{intheight2} in this case can be rewritten 
as follows:

\begin{cor}\label{cor:estT} We have
$$\sum_{h\mid k}T((p-1)/h,p)\le\# X(k,p)\le\Psi(k,p_{s})$$
where $s = s_0(k,k)$.
\end{cor}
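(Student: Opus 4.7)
The left-hand inequality is precisely the conclusion of Lemma~\ref{newX} and requires no further work. My plan for the right-hand bound is to specialise Theorem~\ref{intheight2} to the coset arising from the congruence $x^k\equiv(-k)^k\pmod p$. Since $k\mid p-1$, let $G\subset\FF_p^*$ be the unique subgroup of order $t:=k$, i.e.\ the group of $k$th roots of unity. Then $x^k\equiv(-k)^k\pmod p$ is equivalent to $x(-k)^{-1}\in G$, so every element of $X(k,p)$ lies in the coset $(-k)G$. As $k<p/2$, each $x\in X(k,p)$ satisfies $|x|=x$, and distinct elements of $X(k,p)$ stay distinct modulo $p$; hence $X(k,p)$ embeds into the set of integer heights appearing in $U(k,k,-k)$.

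Next I would apply Theorem~\ref{intheight2} with $a=-k$, $t=k$, and anchor element $x_0=-k$. This is legitimate: $-k\in(-k)G$ (take $g=1$) and $|x_0|=k$, so $x_0\in U(k,k,-k)$, and the index $s_0(k,t)=s_0(k,k)=s$ matches the one in the corollary. The output is
$$
\#\{|x|:x\in U(k,k,-k),\ (x,k)=1\}\le\Psi(k,p_s),
$$
which at once gives $\#X^*(k,p)\le\Psi(k,p_s)$.

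The main obstacle is to upgrade this bound from $X^*(k,p)$ to the full set $X(k,p)$, since the latter can contain elements $x$ with $\gcd(x,k)>1$. My plan is to reopen the proof of Theorem~\ref{intheight2} in the present setup: a maximal multiplicatively independent subset $\{x_1,\dots,x_\ell\}$ of $X(k,p)$ that is coprime to $-k$ still satisfies $\ell\le s$ via the binomial-coset volume argument, using $|x_0|=k$ to keep every product $x_0^{u_0}x_1^{u_1}\cdots x_\ell^{u_\ell}$ with $\sum u_i=r_0(k)$ below $p/2$ in absolute value. For $x\in X(k,p)$ not coprime to $k$, the exponent vector of $|x|$ decomposes as a rational linear combination of the rows coming from $x_0=-k$ together with those of the coprime basis, and the prime-factorisation matrix argument still encodes $|x|$ as a $p_s$-smooth integer $\le k$. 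Carrying out this bookkeeping—verifying that the extra row supplied by the anchor $x_0$ does not force the prime basis to grow beyond $p_s$—is the technical heart of the argument.
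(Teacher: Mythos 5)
Your overall route is the same as the paper's: the left-hand inequality is taken verbatim from Lemma~\ref{newX}, and the right-hand bound is obtained by specialising Theorem~\ref{intheight2} to the coset $(-k)G$ with $t=k$ and anchor $x_0=-k$, which is exactly how the paper presents the corollary (``Theorem~\ref{intheight2} in this case can be rewritten as follows''). What that specialisation literally yields is $\#X^*(k,p)\le\Psi(k,p_s)$, i.e.\ the count of solutions $x$ with $\gcd(x,k)=1$, and you are right to point out that the stated corollary bounds the full set $X(k,p)$, so an extra step is needed to pass from $X^*$ to $X$.

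The repair you sketch, however, does not close this gap, and the part you defer as ``bookkeeping'' is precisely where it fails. If $\{x_1,\dots,x_\ell\}$ is a maximal multiplicatively independent family of elements of $X(k,p)$ that are coprime to $k$, maximality forces a multiplicative dependence only for further elements of $X(k,p)$ that are themselves coprime to $k$; an element $x\in X(k,p)$ with $\gcd(x,k)>1$ was never a candidate for adjunction, so nothing guarantees that its exponent vector lies in the span of the rows of $x_1,\dots,x_\ell$ together with the row of $k$ --- such an $x$ may involve a prime dividing neither $k$ nor any $x_i$, in which case your claimed decomposition is simply false. Moreover, even when the decomposition holds, one is then working with $\ell+1\le s+1$ independent rows, and the matrix/pivot-prime argument of Theorems~\ref{intheight1}--\ref{intheight2} then gives only $\Psi(k,p_{\ell+1})\le\Psi(k,p_{s+1})$; the device that saves one prime in Theorem~\ref{intheight2} is exactly the coprimality of the \emph{counted} elements with the anchor, which is unavailable for the non-coprime part of $X(k,p)$. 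So, as written, your argument establishes $T((p-1)/k,p)=\#X^*(k,p)\le\Psi(k,p_s)$ and (via the Theorem~\ref{intheight1} variant) $\#X(k,p)\le\Psi(k,p_{s+1})$, but not the asserted inequality $\#X(k,p)\le\Psi(k,p_s)$; an actual argument controlling the elements sharing a factor with $k$ within the same $s$-dimensional framework is still missing.
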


By the prime number theorem
we have
\be\label{PNTLeg}
\frac{p_j}{\log p_j}=j+O\(\frac j{\log j}\).
\ee

It is easy to estimate the number of small divisors of a positive integer
in terms of the $\Psi(x,y)$  function. Moreover, we need to estimate the 
number of products of a small divisor of a fixed number by a small smooth 
number. For $m\in\NN$, $m\ge2$, $y>0$, and
$z>0$ by  $\tau(m,y,z)$ we denote the number of numbers $dl\le z$ where $d$
is a divisor of $m$ and $l$ is an $y$-smooth number.
Several estimates on $\tau(m,1,z)$ have recently been given in~\cite{GrTen}.
In particular our next estimate for $y=1$ is 
essentially~\cite[Bound~(7)]{GrTen}.

\begin{lem}\label{bounddivsm} We have $\tau(m,y,z)\le\Psi(z,q)$
where $q$ is the largest prime number with
$$
\prod_{\substack{y<\ell \le q\\\ell~\mathrm{prime}}}\ell \le m.
$$
\end{lem}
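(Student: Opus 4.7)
The plan is to construct an explicit injection $\sigma$ from the set of positive integers counted by $\tau(m,y,z)$ into the set of $q$-smooth integers up to $z$, which immediately gives $\tau(m,y,z)\le\Psi(z,q)$. The central observation is that if $n=dl$ with $d\mid m$ and $l$ a $y$-smooth integer, then every prime factor of $n$ exceeding $y$ must come from $d$, and hence divides $m$.

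Let $\ell_1<\ell_2<\cdots<\ell_r$ be the distinct prime divisors of $m$ that exceed $y$, and let $p_{s_1}<p_{s_2}<\cdots$ be all primes exceeding $y$, listed in order. Each integer $n$ counted by $\tau(m,y,z)$ then factors uniquely as
\[
n=n_s\prod_{j=1}^r\ell_j^{a_j},
\]
where $n_s$ is the $y$-smooth part of $n$ and $a_j\ge0$. I would then define
\[
\sigma(n)=n_s\prod_{j=1}^r p_{s_j}^{a_j}.
\]
Since $\ell_1,\ldots,\ell_j$ are $j$ distinct primes greater than $y$, we have $p_{s_j}\le\ell_j$ for each $j$, hence $\sigma(n)\le n\le z$. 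The map $\sigma$ is clearly injective (reverse it by replacing each $p_{s_j}$ by $\ell_j$), and every prime factor of $\sigma(n)$ is either at most $y$ or among $p_{s_1},\ldots,p_{s_r}$, so $\sigma(n)$ is $p_{s_r}$-smooth.

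It only remains to check that $p_{s_r}\le q$. Since $\ell_1,\ldots,\ell_r$ are distinct primes dividing $m$,
\[
\prod_{j=1}^r p_{s_j}\le\prod_{j=1}^r\ell_j\le m.
\]
But $p_{s_1},\ldots,p_{s_r}$ are by construction exactly the primes in the interval $(y,p_{s_r}]$, so the left-hand product equals the product of all primes $\ell$ with $y<\ell\le p_{s_r}$. The definition of $q$ as the largest prime for which this product does not exceed $m$ therefore forces $p_{s_r}\le q$, which finishes the argument.

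The only subtleties are the monotonicity inequality $p_{s_j}\le\ell_j$ and its consequence $p_{s_r}\le q$; both are elementary and present no real obstacle, and no analytic tools about smooth numbers are needed here.
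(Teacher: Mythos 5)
Your proof is correct and coincides with the paper's own argument: both replace the prime divisors of $m$ exceeding $y$ that occur in $n$ by the smallest primes exceeding $y$, note the image is a $q$-smooth number at most $n\le z$, and conclude by injectivity of this substitution. The only difference is cosmetic — you verify explicitly that the largest substituted prime is at most $q$, a point the paper dismisses as clear.
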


\begin{proof}
Let $D$ be the set of divisors $d$ of $m$ such that all prime divisors
of $d$ are greater than $y$. Thus, we have to estimate the number of
numbers $dl\le z$ where $d\in D$ and $l$ is an $y$-smooth number.
Let $q_1<q_2<\dots<q_J$ be the prime divisors of $m$ that are greater than $y$,
and let $\tilde p_1<\tilde p_2<\dots$ be the primes greater than $y$.
We associate with any number $dl$ where $l$ is $y$-smooth, $d\in D$ and
$$
d =  \prod_{j=1}^J q_j^{\alpha_j}
$$
the  number
$$l \prod_{j=1}^J \tilde p_j^{\alpha_j} \le dl,
$$
which is clearly $q$-smooth.
Since this map is injection, the result now follows.
\end{proof}

\begin{cor}\label{bounddiv} We have $\tau(m,1,z)\le\Psi(z,q)$
where $q$ is the largest prime number with
$$
\prod_{\substack{\ell \le q\\\ell~\mathrm{prime}}}\ell \le m.
$$
\end{cor}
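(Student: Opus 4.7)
The plan is to obtain Corollary~\ref{bounddiv} as the immediate $y=1$ specialization of Lemma~\ref{bounddivsm}. Note first that a $1$-smooth positive integer must equal $1$, since there are no primes $\le 1$ and the only admissible value is the empty product. Consequently $\tau(m,1,z)$ simply counts divisors $d$ of $m$ satisfying $d\le z$. Meanwhile the restriction $y<\ell\le q$ appearing in Lemma~\ref{bounddivsm} collapses to $\ell\le q$ when $y=1$, because every prime is $>1$. Thus the product condition defining $q$ in Lemma~\ref{bounddivsm} reduces verbatim to the condition stated in the corollary, and its conclusion yields $\tau(m,1,z)\le\Psi(z,q)$.

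No genuine obstacle arises; if anything, the only thing worth a moment's care is checking that the injection constructed in the proof of Lemma~\ref{bounddivsm} remains meaningful in the degenerate case $y=1$. In that case the set $D$ of divisors of $m$ composed of primes exceeding $y$ becomes the full divisor set of $m$, the factor $l$ is forced to equal $1$, and the injection sends a divisor $d = \prod_{j=1}^{J} q_{j}^{\alpha_{j}}$ of $m$ to $\prod_{j=1}^{J} \tilde{p}_{j}^{\alpha_{j}}\le d\le z$, where $\tilde{p}_{1}<\tilde{p}_{2}<\dots$ now enumerates all primes in increasing order; by the definition of $q$ this image is $q$-smooth, completing the count by $\Psi(z,q)$. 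Since everything follows by direct substitution, I would simply state the corollary as a consequence of Lemma~\ref{bounddivsm} and omit the details.
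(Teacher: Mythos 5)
Your proposal is correct and matches the paper exactly: the corollary is obtained there precisely as the $y=1$ specialization of Lemma~\ref{bounddivsm}, with no separate argument given. Your remarks that $1$-smooth forces $l=1$ and that the condition $y<\ell\le q$ collapses to $\ell\le q$ are the only points needing mention, and you handle them correctly.
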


\subsection{Some tools from functional analysis}
\label{sec:FA}

Finally, we also need the following statement which is essentially
based on linear algebra. Let $\cX$ be a linear space of real
sequences $\vec{x} = (x_q)_{q\in \cP}$, finitely supported on the
set of primes $\cP$.

Define
$$
\cH(\vec{x}) = \sum_{q \in \cP} |x_q| \log q.
$$
Also for a set $\cQ \subseteq \cP$ we use $\pi_\cQ$ to denote the
coordinate restriction on the sequences of $\cX$,
that is for  $\vec{x} = (x_q)_{q\in \cP} \in \cX$ we have
$$
\pi_\cQ(\vec{x}) = (x_q)_{q\in \cQ}.
$$

We have the following:

\begin{lem}\label{linalg}
 Let $\cF \subseteq \cX$ be a finite whose
elements have integer coordinates. Let a positive
integers $s$, $L$, $M$ be such that $L\ge M$;
\begin{enumerate}
\item  all elements of $\cF$
generate a linear subspace $\langle \cF \rangle$ of $\cX$ of dimension
$\dim \langle \cF \rangle \le s$;
\item $\cH(\vec{x}) < L$ for  $\vec{x} \in \cF$;
\item for any subset $\cQ \subseteq \cP$ with
\begin{equation}
\label{eq:Cond Q}
\sum_{q \in \cQ} \log q < M
\end{equation}
the sequences $\pi_{\cP\setminus \cQ}$ is one-to-one on $\cF$.
\end{enumerate}
Then
$$
\# \cF < (c L/M)^s
$$
for some absolute constant $c$.
\end{lem}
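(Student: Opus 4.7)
The plan is to convert the statement into a packing estimate in a finite-dimensional normed space. First I observe that $\cH$ restricts to a norm on any finite-dimensional subspace of $\cX$: it is positively homogeneous, the triangle inequality comes from $|a+b|\le|a|+|b|$ applied coordinate by coordinate, and $\cH(\vec x)=0$ forces $\vec x=0$ because $\log q>0$ for every prime $q$. So $(\langle\cF\rangle,\cH)$ is a normed space of dimension $d\le s$ by hypothesis (i).

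Next I turn condition (iii) into a separation statement on $\cF$. Given distinct $\vec x_1,\vec x_2\in\cF$, let $\cQ$ be the set of primes $q$ on which $x_{1,q}\ne x_{2,q}$. Then $\pi_{\cP\setminus\cQ}$ sends $\vec x_1$ and $\vec x_2$ to the same point, so by the contrapositive of (iii) we must have $\sum_{q\in\cQ}\log q\ge M$. Since $\cF$ consists of integer sequences, $|x_{1,q}-x_{2,q}|\ge1$ for every $q\in\cQ$, hence
$$
\cH(\vec x_1-\vec x_2)=\sum_{q\in\cQ}|x_{1,q}-x_{2,q}|\log q\ge\sum_{q\in\cQ}\log q\ge M.
$$
Combined with hypothesis (ii), this shows that $\cF$ lies in the $\cH$-ball of radius $L$ in $\langle\cF\rangle$ and is pairwise $M$-separated.

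Finally a standard volume comparison closes the argument. The open $\cH$-balls of radius $M/2$ centered at points of $\cF$ are pairwise disjoint and are all contained in the $\cH$-ball of radius $L+M/2\le 3L/2$, where I have used the hypothesis $L\ge M$. Any norm ball in a $d$-dimensional real vector space has volume proportional to the $d$-th power of its radius, so
$$
\#\cF\cdot(M/2)^d\le(3L/2)^d,
$$
which gives $\#\cF\le(3L/M)^d\le(3L/M)^s$ (the last inequality holds because $3L/M\ge 3>1$), establishing the lemma with absolute constant $c=3$.

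The only step needing genuine care is the second: extracting the $M$-separation property from the coordinate-restriction hypothesis (iii) and using integrality to convert a lower bound on $\sum_{q\in\cQ}\log q$ into a lower bound on $\cH(\vec x_1-\vec x_2)$. Once that is done, the conclusion $(cL/M)^s$ is immediate from Minkowski-style volume comparison, and no subtle analytic input is required.
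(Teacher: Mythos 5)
Your proof is correct, and it takes a genuinely different route from the paper's. You convert hypothesis~(iii) plus integrality into an $M$-separation statement: for distinct $\vec{x}_1,\vec{x}_2\in\cF$ the set $\cQ$ of primes where they differ cannot satisfy~\eqref{eq:Cond Q}, and each such coordinate of the difference contributes at least $\log q$ to $\cH(\vec{x}_1-\vec{x}_2)$, so $\cH(\vec{x}_1-\vec{x}_2)\ge M$; hypothesis~(ii) confines $\cF$ to the $\cH$-ball of radius $L$, hypothesis~(i) places everything in a subspace of dimension $d\le s$ on which $\cH$ is a genuine norm, and the standard packing/volume comparison then gives $\#\cF\le(1+2L/M)^d\le(3L/M)^s$ (enlarging $c$ to $4$, say, yields the strict inequality as stated). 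The paper argues instead by duality: it forms the family $\fB$ of coordinate sets $\cB$ with $\#\cB\le s$ on which $\pi_\cB$ is injective on $\cF$, applies the Hahn--Banach separation theorem to produce $\beta\in\conv\fB$ with $\beta_q<2(s/M)\log q$, averages to extract a single $\cB\in\fB$ together with a subset $\cF_0$ containing at least half of $\cF$ and satisfying $\sum_{q\in\cB}|x_q|\ll sL/M$, and then counts integer points via a binomial coefficient. Your argument is shorter, avoids functional analysis entirely, and gives an explicit constant; the paper's argument yields extra structural information --- an explicit set of at most $s$ prime coordinates that separates half of $\cF$ with small coordinate sums --- but only the cardinality bound is used later (in Lemma~\ref{K30}), so your packing argument suffices for the application. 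The only point needing a word of care, which you handle correctly, is that the volume comparison must be carried out inside the finite-dimensional subspace $\langle\cF\rangle$ with respect to the restricted norm, where balls indeed have finite positive volume scaling like the $d$-th power of the radius.
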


\begin{proof}
We take a sufficiently large prime
$q_0$ and let $\cX_0$ be (a finitely dimensional) space of all
real sequences $\vec{x} = (x_q)_{q\in \cP_0}$
where $\cP_0=\{q\in \cP, q\le q_0\}$.  
We require that $q_0$ is so large that all elements of 
$\cF$ are supported on $\cP_0$. Thus, 
we can consider $\cF$ as a subset of $\cX_0$. Reduction the lemma
to a finite dimensional subspace is caused by the using of
the Hahn-Banach separation theorem which has a simpler form
in the finite dimensional case.

Denote
$$
\fB=\{\cB \subseteq \cP_0~:~ \# \cB \le s, \ \pi_{\cB}\
\text{is one-to-one on}\ \cF\}
$$
We see from the condition~(i) that $\fB\ne \emptyset$.
Moreover if $\cQ \subseteq \cP$ satisfies~\eqref{eq:Cond Q} 
then there exists $\cB \in \fB$ with $\cB\cap \cQ = \emptyset$.
Indeed, by~(iii), $\pi_{\cP\setminus\cQ}$ is one-to-one map from
$\cF$ to $\cG = \pi_{\cP\setminus\cQ}(\cF)$. Since by~(i) the
linear subspace $\langle \cG \rangle =
\pi_{\cP\setminus\cQ}(\langle \cF \rangle) $ of $\cX_0$ is of
dimension at most $s$, there is $\cB \subseteq \cP\setminus\cQ$
with $\# \cB \le s$ and such that $\pi_\cB$ is one-to-one on
$\langle \cG \rangle$. Hence $\pi_\cB$ is one-to-one on $\cG$ and
$\cF$.

Let $\conv \fB$ be the convex hull in $\cX_0$ of characteristic functions of
all possible sets treated as elements of $\cB \in \fB$.

We claim that there exists an element 
$\beta = (\beta_q)_{q\in \cP} \in \conv \fB$
such that
\begin{equation}
\label{eq:beta} 0 \le \beta_q < 2 \frac{s}{M} \log q, \quad q \in
\cP.
\end{equation}
Indeed, assume that it is false. Define another
convex subset of $\cX_0$:
$$
\cA=\{(x_q)_{q\in \cP, q\le q_0}~:~x_q< 2 \frac{s}{M} \log q \}.
$$ 
By our assumption, the convex sets $\cA$ and $\conv \fB$ are
disjoint; also, $\cA$ is an open set. Therefore, we can apply the
Hahn-Banach separation theorem, see~\cite{Rud},  and conclude the existence 
of a
nonzero sequence $(\mu_q)_{q\in \cP}$ such that
$$
S = \sup_{(x_q)\in \cA}\sum_{q \in \cP} \mu_qx_q 
$$
satisfies
$$\sup_{(x_q)\in \cA}\sum_{q \in \cP} \mu_qx_q \le
\inf_{(x_q)\in \conv \fB}\sum_{q \in \cP} \mu_qx_q.
$$ 
and in particular is finite. Furthermore, since $S$ is finite we 
also see 
 that $\mu_q\ge0$ for all $q$. Hence
$$S = 2 \frac{s}{M} \sum_{q \in \cP} \mu_q\log q  >0.
$$ 
Now we define a nonnegative sequence
$$ 
\(\lambda_q\) = \(\mu_q/S\)_{q \in \cP}.
$$
Then we have
\begin{equation}
\label{eq:lambda1} \sum_{q \in \cP} \lambda_q \log q \le
\frac{M}{2s}
\end{equation}
and
\begin{equation}
\label{eq:lambda2}
\min_{\cB \in \fB} \sum_{q \in \cB} \lambda_q \ge 1.
\end{equation}
We now take
$$
\cQ = \left\{q \in \cP~:~ |\lambda_q|  > \frac{1}{2s}\right\}
$$
for which by~\eqref{eq:lambda1} we have~\eqref{eq:Cond Q}.
Thus there exists $\cB \in \fB$ with $\cB\cap \cQ = \emptyset$. We have
$$
\sum_{q \in \cB} |\lambda_q| \le   \frac{1}{2s} \#\cB \le  \frac{1}{2}
$$
contradicting~\eqref{eq:lambda2}. This proves the existence
of $\beta = (\beta_q)_{q\in \cP} \in \conv \fB$ satisfying~\eqref{eq:beta}.

It follows from the condition~(ii) and the
inequality~\eqref{eq:beta} that
$$
\sum_{q \in \cP} |x_q| \beta_q  <   2 \frac{s}{M} \sum_{q \in \cP} |x_q| \log q
\le  2 \frac{s L}{M}.
$$
Therefore
$$
\sum_{\vec{x} \in \cF} \sum_{q \in \cP} |x_q| \beta_q  
<   2 \frac{s L}{M}\#\cF.
$$
Since $\beta \in \conv \fB$, by the convexity there is $\cB \in \fB$
with
$$
\sum_{\vec{x} \in \cF} \sum_{q \in \cB} |x_q|   <     2 \frac{s L}{M}\#\cF.
$$
Hence there is a set $\cF_0 \subseteq \cF$ such that
\begin{equation}
\label{eq:F0 size}
\# \cF_0 \ge \frac{1}{2}\# \cF
\end{equation}
and  for every $\vec{x} \in \cF_0$ we have
\begin{equation}
\label{eq:F0 sum}
\sum_{q \in \cP} |x_q| \le \frac{4SL}M.
\end{equation}
Since $\cB \in \fB$, the map $\pi_\cB$ is one-to-one on $\cF_0$.
Moreover, $x_q \in \ZZ$ for $\vec{x} \in \cF$. Therefore, 
from~\eqref{eq:F0 size}
and~\eqref{eq:F0 sum} we derive
$$
\# \cF \le 2 \# \cF_0 = 2 \pi_\cB(\cF_0)
< 2 \binom{\rf{4s L/M}+s}{s}
$$
and the result follows.
\end{proof}

\subsection{Estimating $S_p(K_1,K_2)$}

\begin{lem}\label{K1K2} We have
$$S_p(K_1,K_2)=o(1),
$$
as $p\to\infty$
\end{lem}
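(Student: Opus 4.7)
The plan is to apply Corollary~\ref{cor:estT} term-by-term: for every divisor $k\mid p-1$ with $K_2<k\le K_1$, bound $T((p-1)/k,p)\le \Psi(k,p_{s_0(k,k)})$, and then show
$$
\sum_{\substack{k\mid p-1\\ K_2<k\le K_1}} \frac{\Psi(k,p_{s_0(k,k)})}{k}=o(1).
$$
The only inputs beyond Corollary~\ref{cor:estT} are the smooth-number estimates in Corollary~\ref{smooth1} and Lemma~\ref{lem:smoothmain2}, together with the classical bound $\tau(p-1)\le\exp((\log 2+o(1))\log p/\log\log p)$.

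First I would analyse the parameters. For $k$ in this range, $\log k\in((\log p)^{0.4},4\log p/\log\log p]$ and so $r_0(k)=\fl{\log(p/2)/\log k}$ lies in $[\log\log p/4,(\log p)^{0.6})$. The defining inequality $\binom{r_0(k)+s}{s}\le k$ for $s=s_0(k,k)$ splits naturally into two regimes: when $s\ll r_0(k)$ (the lower part of the range, close to $K_2$) the bound reduces to $s\log r_0(k)\lesssim\log k$, hence $p_s\sim s\log s\le(\log k)^{1+o(1)}$; when $s\gg r_0(k)$ (the upper part, close to $K_1$) it becomes $r_0(k)\log s\lesssim\log k$, so $p_s$ exceeds any fixed power of $\log k$. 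In the first regime Corollary~\ref{smooth1} yields $\Psi(k,p_s)\le k^{\varepsilon+o(1)}$ for any fixed $\varepsilon>0$, while in the second regime Lemma~\ref{lem:smoothmain2} applies (the smoothness threshold is $\ge(\log k)^{1+\varepsilon}$), giving
$$
\Psi(k,p_s)\ll k\cdot u^{-u(1+o(1))},\qquad u=\frac{\log k}{\log p_s}.
$$
A short computation, combining the two cases of the binomial constraint above, shows that $u\to\infty$ uniformly for $k\in(K_2,K_1]$ and that $u\log u$ grows faster than $\log\tau(p-1)\ll\log p/\log\log p$.

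Finally I would sum dyadically over $k\in(2^j,2^{j+1}]\cap(K_2,K_1]$, bounding the number of divisors of $p-1$ in each dyadic block by $\tau(p-1)$ and using the per-term savings obtained above. The main obstacle is precisely the transition zone $s\asymp r_0(k)$, where neither Corollary~\ref{smooth1} nor the $u^{-u}$ regime of Lemma~\ref{lem:smoothmain2} is at its sharpest; the splitting point between the two estimates has to be chosen carefully so that in each sub-range the combined savings $u\log u$ (or the exponent $1/\alpha$ in $\Psi(k,p_s)\le k^{1-1/\alpha+o(1)}$) dominate $\log\tau(p-1)\asymp\log p/\log\log p$, which is what ultimately forces the quantitative choice of the thresholds $K_1=\exp(4\log p/\log\log p)$ and $K_2=\exp((\log p)^{0.4})$.
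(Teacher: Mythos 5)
Your plan has a genuine gap, and it cannot be repaired by tuning the splitting point between the two smooth-number regimes: the term-by-term bound $T((p-1)/k,p)\le\Psi\bigl(k,p_{s_0(k,k)}\bigr)$ from Corollary~\ref{cor:estT} is far too weak in the upper part of the range. For $k$ near $K_1=\exp(4\log p/\log\log p)$ one has $r_0(k)\asymp\log\log p$, and the constraint $\binom{r_0+s}{s}\le k$ allows $s$ as large as $\exp\bigl(c\log p/(\log\log p)^2\bigr)$; hence $\log p_{s_0(k,k)}\asymp\log p/(\log\log p)^2$ and $u=\log k/\log p_{s_0}\asymp\log\log p$, so the saving from Lemma~\ref{lem:smoothmain2} is only $u^{-u(1+o(1))}=\exp\bigl(-\Theta(\log\log p\cdot\log\log\log p)\bigr)$. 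Your asserted ``short computation'' that $u\log u$ dominates $\log\tau(p-1)\asymp\log p/\log\log p$ is simply false: at $k=K_1$ one gets $u\log u\asymp\log\log p\log\log\log p$, and even at $k=K_2$ only $u\log u\asymp(\log p)^{0.4}$, both much smaller than $\log p/\log\log p$. Moreover the number of divisors of $p-1$ in a single dyadic block near $K_1$ can genuinely be as large as $\exp\bigl(c'\log p\log\log\log p/(\log\log p)^2\bigr)$ (take $p\equiv1$ modulo a primorial of size a fixed power of $p$ and multiply $\asymp\log p/(\log\log p)^2$ primes of size $\asymp\log p$), which dwarfs the per-term saving, so your dyadic sum of (divisor count)$\times$(per-term bound) diverges instead of being $o(1)$. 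The same failure occurs throughout $\log k\gg(\log p)^{1/2}$: with $\log k=(\log p)^{\beta}$, $\beta>1/2$, the smooth bound saves only $\exp\bigl(-\Theta((\log p)^{1-\beta}\log\log p)\bigr)$ per term while the divisor count up to $k$ can be of size $k^{1-\beta+o(1)}=\exp\bigl(\Theta((\log p)^{\beta})\bigr)$. Finally, comparing against the full $\tau(p-1)$ (which can be $\exp((\log2-o(1))\log p/\log\log p)$) is too lossy even at the bottom of the range.

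The paper does not use Corollary~\ref{cor:estT} in this range at all. It invokes Lemma~\ref{(39)}, the power-saving bound $T(d,p)\le\bigl(d^{-4/3}p+(p/d)^{1/3}\bigr)(p/d)^{o(1)}$ from~\cite{BKS}, which gives $T((p-1)/k,p)/k\le k^{1/3+o(1)}p^{-1/3}+k^{-2/3+o(1)}$; the first term is summed trivially over all integers $k\le K_1$ (giving $K_1^{4/3+o(1)}p^{-1/3}=o(1)$), and the second is handled by partial summation against the count of divisors up to $k$, bounded via Corollary~\ref{bounddiv} and Corollary~\ref{smooth1} by $\tau(p-1,k)\le\Psi(k,q)\ll k^{0.62}$ since $q=(1+o(1))\log p\le(\log k)^{2.6}$ for $k>K_2$. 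The numerology $2/3>1-1/2.6$ is exactly what the threshold $K_2$ is for; the subgroup/smoothness bound on $T$ itself only becomes effective for $k\le K_2$, which is why the paper reserves it for Lemmas~\ref{K2K3} and~\ref{K30}. To salvage your approach you would have to import a power-of-$k$ saving for $T((p-1)/k,p)$ of this kind; Corollary~\ref{cor:estT} alone cannot supply it.
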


\begin{proof} By Lemma~\ref{(39)}, we have
\be\label{from39}
\begin{split}
\sum_{\substack{k\mid p-1,\\K_2<k\le K_1}}&\frac{T((p-1)/k,p)}k\\
&\le\sum_{\substack{k\mid p-1,\\K_2<k\le K_1}}k^{1/3+o(1)}p^{-1/3}
+\sum_{\substack{k\mid p-1,\\K_2<k\le K_1}}k^{-2/3+o(1)}.
\end{split}
\ee
The first sum can be estimated trivially:
\be\label{k4term1}
\begin{split}
\sum_{\substack{k\mid p-1,\\K_2<k\le K_1}}k^{1/3+o(1)}p^{-1/3} &
\le\sum_{K_2<k\le K_1}k^{1/3+o(1)}p^{-1/3}\\
& \le K_1^{(4/3)+o(1)}p^{-1/3}=o(1).
\end{split}
\ee
Next,
\begin{equation*}
\begin{split}
\sum_{\substack{k\mid p-1,\\K_2<k\le K_1}}k^{-2/3+o(1)}&
\ll\sum_{\substack{k\mid p-1,\\K_2<k\le K_1}}k^{-0.65}\\
&=\sum_{K_2<k\le K_1}k^{-0.65}(\tau(p-1,k)-\tau(p-1,k-1)).
\end{split}
\end{equation*}
By partial summation,
\be\label{k4part}
\begin{split}
\sum_{\substack{k\mid p-1,\\K_2<k\le K_1}}&k^{-2/3+o(1)}\\
&\ll\sum_{K_2<k\le K_1}k^{-1.65}\tau(p-1,k)+K_1^{-0.65}\tau(p-1,K_1).
\end{split}
\ee
Define $q$ as the largest prime $q$ so that
\be\label{eq:def q}
\prod_{\substack{\ell \le q\\\ell~\mathrm{prime}}}\ell \le p-1.
\ee
By~\eqref{PNTLeg} we have
\be\label{esty0}
q=\log p+o(\log p).
\ee
Therefore, $q\le(\log k)^{2.6}$ for $k> K_2$.
Using Lemma~\ref{bounddiv} and Corollary~\ref{smooth1} we get
$$\tau(p-1,k)\ll k^{0.62}\quad(k> K_2).$$
Plugging in this estimate to~\eqref{k4part} we conclude that
\be\label{k4term2}
\sum_{\substack{k\mid p-1,\\K_2<k\le K_1}}k^{-2/3+o(1)}=o(1).
\ee
Now the result follows from~\eqref{from39}, \eqref{k4term1}
and~\eqref{k4term2}.
\end{proof}

\subsection{Estimating $S_p(K_2,K_3)$}

\begin{lem}\label{K2K3} We have
$$S_p(K_2,K_3)=o(1),
$$
as $p\to\infty$
\end{lem}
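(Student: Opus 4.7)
The plan is to follow the template of Lemma~\ref{K1K2}'s proof, but to replace the Lemma~\ref{(39)} bound on $T(d,p)$ by the sharper Corollary~\ref{cor:estT}: for $k\mid p-1$ with $K_3<k\le K_2$ I would write $T((p-1)/k,p)\le\Psi(k,p_{s(k)})$, where $s(k)=s_0(k,k)$. The definitions~\eqref{eq:r0}--\eqref{eq:s0} together with the prime number theorem give $s(k)\asymp\log k/\log\log p$ and $\log p_{s(k)}\asymp\log s(k)\asymp\log\log k-\log\log\log p$ uniformly in this range.

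The first technical step is to estimate $\Psi(k,p_{s(k)})$. Lemma~\ref{lem:smoothmain3} is unavailable here, since its hypothesis $p_{s(k)}\ge\exp((\log\log k)^{5/3+\varepsilon})$ fails throughout $(K_3,K_2]$, so I would work with the uniform asymptotic of Lemma~\ref{lem:smoothmain}. A careful expansion of $Z$ with $x=k$ and $y=p_{s(k)}$ should give
$$\Psi(k,p_{s(k)})\le k^{\eta(k)},\qquad \eta(k)\le\frac{C\log\log\log p}{\log\log p},$$
uniformly for $K_3<k\le K_2$, which yields $T((p-1)/k,p)/k\le k^{-1+\eta(k)}$.

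In parallel I would bound the divisor-counting function. By Corollary~\ref{bounddiv} and~\eqref{esty0}, $\tau(p-1,1,t)\le\Psi(t,q)$ with $q=(1+o(1))\log p$. Since $q\ge(\log t)^{1+\varepsilon}$ for $t\le K_2$, Lemma~\ref{lem:smoothmain2} applies and gives
$$\tau(p-1,1,t)\le t^{1-\gamma(t)+o(1)},\qquad \gamma(t)=\frac{\log\log t-\log\log\log p}{\log\log p}.$$
For $t>K_3$ we have $\gamma(t)\ge 6\log\log\log p/\log\log p$, so $\gamma(t)-\eta(t)\gg\log\log\log p/\log\log p$ uniformly.

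Abel summation then closes the argument. The boundary contribution at $K_2$ is $\le K_2^{-\gamma(K_2)+\eta(K_2)+o(1)}=K_2^{-0.4+o(1)}$, super-polynomially small, while the integral
$$\int_{K_3}^{K_2}\tau(p-1,1,t)\,t^{-2+\eta(t)}\,dt\le\int_{K_3}^{K_2} t^{-1-(\gamma(t)-\eta(t))+o(1)}\,dt$$
has integrand of exponent strictly below $-1$, so it is dominated by its value at $t=K_3$, producing a bound of the form $\exp(-c(\log\log p)^6\log\log\log p)=o(1)$. The principal obstacle is the delicate expansion of $\Psi(k,p_{s(k)})$ via Lemma~\ref{lem:smoothmain}: one must track the asymptotics of $Z$ precisely enough to verify the strict inequality $\eta(t)<\gamma(t)$ by a sufficient margin throughout $(K_3,K_2]$, which is what makes the partial summation converge super-polynomially.
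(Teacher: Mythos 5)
Your route is the same as the paper's: bound $T((p-1)/k,p)$ via Corollary~\ref{cor:estT}, bound the number of divisors of $p-1$ up to $t$ via Corollary~\ref{bounddiv} together with Lemma~\ref{lem:smoothmain2} (with $q=(1+o(1))\log p$ from \eqref{esty0}), estimate $\Psi(k,p_{s_0(k,k)})$ by the uniform Lemma~\ref{lem:smoothmain}, and then sum; whether one sums by Abel summation or, as the paper does, over $O(\log p)$ blocks $K/e<k\le K$ is immaterial (though note $\eta(t)$ is not monotone in $t$, so for the partial summation you should first replace it by its uniform maximum over the range). The one genuine gap sits exactly at the point you defer as ``the principal obstacle''. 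You assert $\Psi(k,p_{s(k)})\le k^{\eta(k)}$ with $\eta(k)\le C\log\log\log p/\log\log p$ for an \emph{unspecified} absolute constant $C$, and then conclude that $\gamma(t)-\eta(t)\gg\log\log\log p/\log\log p$ uniformly from the single inequality $\gamma(t)\ge 6\log\log\log p/\log\log p$ on $(K_3,K_2]$. That deduction is invalid unless $C<6$, and nothing in your argument pins $C$ down; the entire lemma hinges on this numerical margin, which is precisely why $K_2$ and $K_3$ carry the exponents $0.4$ and $7$.

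The missing computation is what the paper supplies. From $\binom{r+s}{s}\ge(r/s)^s$ one derives the asymptotic \eqref{asymps0},
$$s_0(K,K)=\frac{\log K}{\log\log p-2\log\log K}\,(1+o(1))+O(1),$$
and since $\log\log K\le 0.4\log\log p$ for $K\le K_2$ this gives $s\le(5+o(1))\log K/\log\log p$. Feeding this into Lemma~\ref{lem:smoothmain}, using $p_s/\log p_s=s+o(s)$ and $\log\bigl(1+\log K/p_s\bigr)\le(1+o(1))\log\log\log p$, yields \eqref{Psi1est}, i.e.\ your $\eta$ with $C=5+o(1)$; meanwhile $K>K_3$ gives the divisor saving \eqref{Psi2est} with constant $6$. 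The difference $6-5=1$ produces a gain of $(1+o(1))(\log\log p)^6\log\log\log p$ in the exponent on each block, hence $S(K)\ll(\log p)^{-2}$ and $S_p(K_2,K_3)=o(1)$ after summing the $O(\log p)$ blocks. So your outline is correct and matches the paper's proof, but as written the decisive inequality $\eta(t)<\gamma(t)$ (with a quantitative margin) is assumed rather than proved, and without fixing $C$ strictly below $6$ the argument does not close.
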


\begin{proof} For any integer $K\in(K_3,K_2]$ we   estimate
\be
\label{definS(K)}
S(K)=\sum_{\substack{k\mid p-1,\\K/e<k\le K}}\frac{T((p-1)/k,p)}k.
\ee
By Corollaries~\ref{cor:estT} and \ref{bounddiv}, we have
\be\label{est1S(K)}
S(K)\le2\Psi(K,p_s)\Psi(K,q)/K.
\ee
where $r=r_0(K)$, $s=s_0(K,K)$ and the prime $q$ is defined
by~\eqref{eq:def q}.
To estimate $s$ we   use a simple inequality
\be
\label{simple}
\binom{r+s}{s}\ge(r/s)^s.
\ee
For $K\le K_2$ we have $r\gg(\log p)^{0.6}$.
Take $s_0=\fl{(\log p)^{0.4}}$.
Then we have, by~\eqref{simple},
$$
\binom{r+s_0}{s_0}\ge(\log p)^{{0.2+o(1)}(\log p)^{0.4}}>K.
$$
Consequently, $s\le s_0=o(r)$ and
\be
\label{eq:asymp}
\binom{r+s}{s}=(r/s)^{s+o(s)}\quad \text{and}\quad
\binom{r+s+1}{s+1}=(r/s)^{s+o(s)}
\ee
Since $r/s_0\le r_0/s\le r$ we have
$$\log(r/s)\asymp\log r\asymp\log((\log p)/\log K),$$
and thus we get the order for $s$
$$s\asymp\frac{\log K}{\log((\log p)/\log K)}$$
Thus, for $K \le K_2$ we have
$$
\log(r/s) = \log r-\log s = (1+ o(1))\(\log\log p - 2\log \log K\).
$$
Hence, from~\eqref{eq:asymp} and the inequalities
$$\binom{r+s}{s}\le K < \binom{r+s+1}{s+1}
$$
we obtain an asymptotic formula for $s$:
\be\label{asymps0}
s=\frac{\log K}{\log\log p-2\log\log K}(1+o(1))+O(1)
\quad\text{as}\quad K\le K_2.
\ee
(Note the term $O(1)$ is included to have a uniform estimate
for $2\le K\le K_2$, rather than only in the range $K_3 <K \le K_2$).

Using that $\log(1 + \alpha) \le \alpha$ for any $\alpha > 0$,
and that $s\to\infty$ for $K\ge K_3$,
we now conclude from Lemma~\ref{lem:smoothmain}
that
$$
\log\Psi(K,p_s)\le(1+o(1)) \frac{p_s}{\log p_s}
\(1+\log\(1+\frac{\log K}{p_s}\)\).
$$

Next, by~\eqref{asymps0},
$$\frac{\log K}{p_s}\le\frac{\log K}{s}
\le(1+o(1))\log\log\log p
$$
thus, by the prime number theorem,
$$\frac{p_{s}}{\log p_{s}} = s+o(s).
$$
Using~\eqref{asymps0} again, we deduce
\be\label{Psi1est}
\begin{split}
\log\Psi(K,p_s)\le(1+o(1))\frac{\log K}{\log\log p-2\log\log K}
\log\log\log p\\
\le(1+o(1))\frac{5\log K}{\log\log p}\log\log\log p.
\end{split}
\ee
Using Lemma~\ref{lem:smoothmain2} and~\eqref{esty0} we get
$$
\log\Psi(K,q)\le\log K - (1+o(1))\frac{\log K}{\log\log p}
\log\frac{\log K}{\log\log p}.
$$
Thus, for $K\ge K_3$ we have the inequality
\be\label{Psi2est}
\log\Psi(K,q)\le\log K - (1+o(1))\frac{6\log K}{\log\log p}
\log\log\log p.
\ee
Combining~\eqref{Psi1est} and~\eqref{Psi2est} gives
\begin{equation*}
\begin{split}\
\log\Psi(K,p_{s})+\log\Psi&(K,q) \le\log K
-(1+o(1))\frac{\log K}{\log\log p} \log\log\log p\\
& \le \log K -(1+o(1))\frac{\log K_3}{\log\log p} \log\log\log p\\
& \le \log K-(1+o(1))(\log\log p)^6 \log\log\log p.
\end{split}
\end{equation*}

Therefore, by~\eqref{est1S(K)}, $S(K)\ll (\log p)^{-2}$. Observing that the sum
$S_p(K_2,K_3)$ 
does not exceed the sum of $O(\log p)$ of sums $S_p(K)$ with
 $K_3<K\le K_2$, we complete the proof.
\end{proof}

\subsection{Estimating $S_p(K_3,0)$}

\begin{lem}\label{K30}
We have
$$
S_p(K_3,0) = O(1).
$$
\end{lem}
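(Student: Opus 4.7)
My plan is to refine the bound $T((p-1)/k,p)\le\#X(k,p)$ from Corollary~\ref{cor:estT} by applying Lemma~\ref{linalg} in place of Theorem~\ref{intheight2}, and then to sum dyadically as in Lemma~\ref{K2K3}. The point is that for $k$ very small relative to $p$, the smooth-number count $\Psi(k,p_s)$ is not sharp enough per term to control the divisor-sum in $S_p(K_3,0)$, and the functional-analytic refinement is needed.

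For each divisor $k$ of $p-1$ with $k\le K_3$, apply Lemma~\ref{linalg} to the collection $\cF\subseteq\ZZ^\cP$ of prime-factorization exponent vectors $(v_q(x))_{q\in\cP}$ of integers $x\in X(k,p)$. Take $L=\log k$ (so that $\cH(\vec x)=\log x\le L$ for every $x\in X(k,p)$) and bound $s=\dim\langle\cF\rangle\le s_0(k,k)$ exactly as in the proof of Theorem~\ref{intheight2}. The value of $M$ is determined by analyzing condition~(iii): failure of injectivity of $\pi_{\cP\setminus\cQ}$ on $\cF$ would yield distinct $x_1,x_2\in X(k,p)$ whose factorizations agree off $\cQ$, whence $x_1/x_2$ is a nontrivial element of the subgroup $G\subseteq\FF_p^*$ of order $k$, represented as $N/D$ with coprime $N,D\le k$ composed only of primes in $\cQ$. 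Counting such ratios via Corollary~\ref{bounddivsm} and demanding the total stay below $|G|=k$ fixes an admissible $M$, comparable to $L$ up to slowly-varying factors. Lemma~\ref{linalg} then yields $\#X(k,p)\le (cL/M)^s$, strictly sharper than $\Psi(k,p_s)$.

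The rest mirrors the proof of Lemma~\ref{K2K3}: set $S(K)=\sum_{k\mid p-1,\,K/e<k\le K}T((p-1)/k,p)/k$ and bound it by $2\Psi(K,q)\cdot(cL/M)^s/K$, using Corollary~\ref{bounddiv} for the count of divisors of $p-1$ in the range. Using the asymptotic behavior of $s_0(K,K)$ together with Lemmas~\ref{lem:smoothmain}--\ref{lem:smoothmain2}, this combined quantity decays in $K$ at a rate summable across dyadic scales, and summation over the $O((\log\log p)^7)$ dyadic values $K\in (e,K_3]$ yields $S_p(K_3,0)=O(1)$.

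The main obstacle is the dual requirement on $M$: it must be large enough that $(cL/M)^s$ is controllably small, yet small enough that condition~(iii) of Lemma~\ref{linalg} can be verified from smooth-number counts inside the tiny subgroup $G$ (of size at most $K_3=\exp((\log\log p)^7)$). Striking this balance—impossible through Theorem~\ref{intheight2} alone—is the essential novelty, and explains why the functional-analytic Lemma~\ref{linalg} is introduced specifically for this range.
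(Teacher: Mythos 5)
Your overall architecture (apply Lemma~\ref{linalg} to the exponent vectors of $X(k,p)$ with $L=\log k$ and $s\le s_0(k,k)$, then sum over scales up to $K_3$) is indeed the paper's starting point, but there is a genuine gap at the crucial step: you assert that an admissible $M$ can be chosen so that condition~(iii) of Lemma~\ref{linalg} \emph{holds for every} $k\le K_3$, justified by counting the rationals $N/D$ supported on a set $\cQ$ with $\sum_{q\in\cQ}\log q<M$ and ``demanding the total stay below $|G|=k$''. That inference is invalid: having fewer candidate ratios than $\#G$ in no way prevents one of those ratios from actually equalling $x_1/x_2$ for two distinct $x_1,x_2\in X(k,p)$, i.e.\ from being a nontrivial element of $G$. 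Condition~(iii) is a statement about the specific set $X(k,p)$, which cannot be forced by a cardinality comparison, and for some $k$ it may simply fail. (Your quantitative claim that $M$ is ``comparable to $L$'' is also off: to make the argument close one ends up taking $M\asymp\log\log p$, far smaller than $L=\log K$, which can be as large as $(\log\log p)^7$.)

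The paper does not try to verify (iii) universally; it splits the moduli into ``good'' $k$, where (iii) holds and Lemma~\ref{linalg} gives $\#X(k,p)\le(c\log K/M)^s$, and ``bad'' $k\in\cK_b$, where (iii) fails, and these are handled by a separate mechanism your proposal lacks. Namely, failure of (iii) produces distinct $x_1,x_2\in X(k,p)$ with $x_1/x_2$ supported on a set $\cQ$ of logarithmic size $<M$; since $x_1^k\equiv x_2^k\pmod p$ the multiplicative order $r$ of $x_1/x_2$ divides $k$, and since $p\mid x_1^r-x_2^r$ with $x_1,x_2\le K_3$ one gets $r>(\log p)^{1/2}$. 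Counting the possible ratios shows they lie in a set $\cU$ with $\#\cU\le e^M(\log K)^{2M/\log M}$, so every bad $k$ is a multiple of one of few large integers $r$, and the resulting divisor sum gains a factor $(\log p)^{-1/2+o(1)}$ (with $M=10^{-2}\log\log p$ this yields $\#\cL_b\ll K(\log p)^{-1/3}$). Without this order/divisibility argument, the terms with bad $k$ — exactly the ones for which your bound $(cL/M)^s$ is unavailable — are uncontrolled, and the dyadic summation cannot be completed. (The paper also replaces the crude product ``per-$k$ bound times divisor count'' by the multiset $\cL$ built from Lemma~\ref{newX} over windows $K/V<k\le K$ with $V=(\log p)^{0.01}$, but that is a quantitative refinement; the missing treatment of $\cK_b$ is the substantive defect.)
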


\begin{proof} 
Since $X(k,p) = \emptyset$ for $k^k < p/2$ we can always assume that $k$ is
large enough.

For $K\le K_3$ we estimate the following sums similar to ones given 
by~\eqref{definS(K)}:
\be\label{definS(K)2}
S(K)=\sum_{\substack{k\mid p-1,\\K/V<k\le K}}\frac{T((p-1)/k,p)}k,
\ee
where
\be\label{definV}
V=(\log p)^{0.01}.
\ee

As before, we put $s = s_0(K,K)$.

By~\eqref{asymps0}, we have $\log s\le\log\log K-1$. Also,
$\log r\ge\log\log p-\log\log K-2$. Therefore,
\be\begin{split}
\label{uppers0}
s & \le\frac{\log K}{\log\log p-2\log\log K} \\
& \le 
\tu\(1+ O\(\frac{\log \log \log p}{\log  \log p}\)\) +O(1),
\end{split}
\ee
where
\be
\label{eq:def tu}
\tu=\frac{\log K}{\log\log p}.
\ee

Identify an integer $x$ given by its prime number factorisation
$$
x = \prod_{q \in \cP} q^{\sigma_q}
$$
with the sequence $(\sigma_q)_{q \in \cP}$. 
Then the set $X(k,p)$ becomes a subset $\cF$ of 
$\cX$ of Section~\ref{sec:FA}. 
As can be seen from the proof of Theorem~\ref{intheight1} 
(see~\eqref{ests}) it satisfies the 
condition~(i) of Lemma~\ref{linalg}.
Clearly, it also satisfies the 
condition~(ii) of Lemma~\ref{linalg} with $L = \log K$.

We now fix some $M$ with $1 < M < \log K$ (to be chosen later). 
Let $\cK_g$ be the set of ``good'' $k \le K$ for which 
the condition~(iii) of Lemma~\ref{linalg}  also holds  
for $X(k,p)$. 

Then the bound of Lemma~\ref{linalg} yields that for $k \in \cK_g$ we have
\be\label{eq:T good k}
\# X(k,p) \le \(c \frac{\log K}{M}\)^s
\ee
for some absolute constant $c>0$. 

Let $\cK_b$ be the set of the remaining ``bad'' $k \le K$ for which 
the condition~(iii) of Lemma~\ref{linalg}  fails.

Now we define a list (a multiset) $\cL$ of integers $m\in(K/V,K]$.
For any $k\in(K/V,K]$ and for any $l\le K/k$ we write $kl$
as many as $T((p-1)/k,p)$ times. Thus, we have
$$\# \cL=\sum_{\substack{k\mid p-1,\\K/V<k\le K}}T((p-1)/k,p)
\fl{\frac{K}{k}}.$$
Hence,
\be\label{eq:lowewrL}
\sum_{\substack{k\mid p-1,\\K/V<k\le K}}\frac{T((p-1)/k,p)}k
\le2K^{-1} \# \cL.
\ee

For any $m\le K$ the number of occurrences of $m$ in $\cL$ is,
by Lemma~\ref{newX},
$$\sum_{\substack{k\mid (p-1,m),\\K/V<k\le K}}T((p-1)/k,p)
\le \# X\((p-1,m),p\).$$ 
We say that $m$ is ``good'' if $(p-1,m)$ is ``good'', and 
$m$ is ``bad'' if $(p-1,m)$ is ``bad''. We split $\cL$ into the sublists
$\cL_g$ and $\cL_b$ formed by ``good'' and ``bad'' elements, respectively. 

We have
$$\# \cL_b=\sum_{\substack{k\mid p-1,\\K/V<k\le K,\\k\in\cK_b}}\# X(k,p)
\fl{\frac{K}{k}}.$$
Therefore,
\be\label{eq:badL}
\# \cL_b \le K\Sigma_b(K), \ee
where
$$
\Sigma_b(K)  
=\sum_{\substack{k\mid p-1,\\K/V<k\le K\\k\in\cK_b}}
\frac{\# X(k,p)}k.
$$
We now  estimate $\Sigma_b(K)$.

We assume that  $k \in \cK_b$. 
Then for some set $\cQ \subseteq \cP$ satisfying~\eqref{eq:Cond Q}
and  distinct integers $x_1,x_2 \in X(k,p)$ for the 
rational number $\gamma = x_1/x_2$ we have
\be
\label{eq:gamma}
\gamma = \prod_{q \in \cQ} q^{\sigma_q}, \qquad \sigma_q \in \ZZ.
\ee

Since $x_1,x_2 \in [1,k] \subseteq [1,K]$,  we have
\be
\label{eq:sum sigma}
 \sum_{q \in \cQ} |\sigma_q| \log q \le 2 \log K.
\ee

Fix an arbitrary set $\cQ$ satisfying~\eqref{eq:Cond Q} 
and estimate the number $N$ of  rational numbers $\gamma$  
satisfying~\eqref{eq:gamma} and~\eqref{eq:sum sigma}.

For some real parameter $\tau > 0$ we partition $\cQ$ as 
$\cQ = \cQ_1 \cup \cQ_2$ where $\cQ_1 = \cQ \cap[1,e^\tau]$. 
A crude estimate gives the bound
$$
N \le N_1N_2
$$
where 
\begin{eqnarray*}
N_1 & = & \# \left\{(\sigma_q)_{q \in \cQ_1}\ : \
 \sum_{q \in \cQ_1} |\sigma_q|  \le 2 \log K\right\};\\
N_2 & = & \# \left\{(\sigma_q)_{q \in \cQ_2}\ : \
 \sum_{q \in \cQ_2} |\sigma_q|  \le \frac{2 \log K}{\tau} \right\}.
\end{eqnarray*}
Since, trivially, $\# \cQ_1 \le e^\tau$ and
by~\eqref{eq:Cond Q} we also have $\# \cQ_2 < M/\tau$, 
we derive 
$$
N \le (4\log K+1)^{e^\tau}\(4  \frac{\log K}{\tau} + 1\)^{M/\tau}.
$$
Taking $\tau = 0.6 \log M$ we obtain 
$$
N  < (\log K)^{2M/\log M},
$$
provided that $K$ is large enough.

Clearly, there are at most $e^M$ possible sets $\cQ$ 
satsifying~\eqref{eq:Cond Q}. Therefore, we see that there is a finite set 
$\cU \subseteq \QQ$ (independent of $k \le K$) of cardinality 
$$
\# \cU \le e^M (\log K)^{2M/\log M}
$$ 
such that 
 if  $k \in \cK_b$ then there are 
two distinct integers $x_1,x_2 \in X(k,p)$ with $x_1/x_2 \in \cU$.

Let $r$ be the multiplicative order of $x_1/x_2$ modulo $p$. Since 
$p \mid x_1^r - x_2^r$ and $1 \le x_1,x_2 \le k$ we derive $p \le k^r$.
Hence for $k \le K \le K_3$ we have
$$
r \ge \frac{\log p}{\log k} > (\log p)^{1/2}.
$$

Let $\cR$ be the subset of all multiplicative orders modulo $p$ 
that are greater than $ (\log p)^{1/2}$  of all rational number 
$\gamma \in \cU$.  Clearly
\be
\label{eq:Bound R}
\# \cR \le \# \cU  \le e^M (\log K)^{2M/\log M}.
\ee

From the definition of $X(k,p)$ we see that $x_1^k \equiv x_2^k \pmod p$.
So if $r$ is the multiplicative order of $x_1/x_2$ then $r \mid k$.

Thus if $k \in \cK_b$ then $k \equiv 0 \pmod r$ 
for some $r \in \cR$.  
Thus, the contribution $\Sigma_b(K)$ to $S(K)$ from ``bad'' $k$ is
$$\Sigma_b(K)
=\sum_{\substack{k\mid p-1,\\K/V<k\le K\\k \in \cK_b}}\frac{\# X(k,p)}k
\le \sum_{r \in \cR} \sum_{\substack{k\mid p-1,\\K/V<k\le K\\r \mid k}}
\frac{\# X(k,p)}k .
$$
Applying Corollary~\ref{cor:estT} and then Corollary~\ref{bounddiv},
we derive 
\be\label{eq:bad}
\begin{split} 
\Sigma_b(K) & \le \frac{V\Psi(K,p_{s})}{K}  
\sum_{\substack{r \in \cR \\ r\mid p-1}} \tau((p-1)/r, K/r)\\
& \le   \frac{V\Psi(K,p_{s})}{K}  
\sum_{\substack{r \in \cR \\ r\mid p-1}} \Psi(K/r, q)
\end{split}
\ee
where, as before, the prime $q$ is defined by~\eqref{eq:def q}.

As we have noticed, only the values of $k$ with
$k^k \ge  p/2$ are of interest. So we can always assume that 
$$
K \ge \frac{\log p}{2 \log \log p}.
$$
In particular for the parameter $\tu$ given by~\eqref{eq:def tu} 
we have $\tu \gg 1$.

We also see from~\eqref{uppers0} that 
$$
\frac{p_s}{\log p_s} \le \tu \(1 + O\(\frac{1}{\log(\tu+1)}\)\) .
$$

So,  to estimate $\Psi(K,p_s)$ we use Lemma~\ref{lem:smoothmain}, where
we see that 
the corresponding values of $Z$ satisfies the inequality
$$
Z \le \frac{p_s}{\log p_s} \(1  
+ \log\(1 + \frac{\log K}{p_s}\)\).
$$
We also note that 
$$
\frac{\log K}{p_s} \gg  \frac{\log K}{\tu \log(\tu+1)}
= \frac{\log \log p}{\log(\tu+1)} \gg \frac{\log \log p}{\log \log \log p}.
$$
Therefore
\begin{equation*}
\begin{split}
Z & \le  \frac{p_s}{\log p_s} \(1  
+ \log\frac{\log K}{p_s} + o(1)\)\\
&= \tu \(1 + O\(\frac{1}{\log(\tu+1)}\)\)
\(\log\frac{\log K}{\tu \log(\tu+1)} + O(1)\)\\
&= \tu \(1 + O\(\frac{1}{\log(\tu+1)}\)\)
\(\log\frac{\log \log p}{ \log(\tu+1)} + O(1)\).
\end{split}
\end{equation*}
We now see from  Lemma~\ref{lem:smoothmain} that
$$
\log \Psi(K,p_s) \le \tu \(1 + O\(\frac{1}{\log(\tu+1)}\)\)
\(\log\frac{\log \log p}{ \log(\tu+1)} + O(1)\). 
$$
from which we derive
\be
\label{eq:PsiKps}
\log \Psi(K,p_s)  
\le \tu  
\(\log\frac{\log \log p}{ \log(\tu+1)} + 
O\(\frac{\log \log \log p}{ \log(\tu+1)}\) \). 
\ee

To estimate $ \Psi(K/r, q)$ for $r \ge r_0$ 
where  $r_0 = (\log p)^{1/2}$ we write
$$
\Psi(K/r, q) \le  \Psi(K/r_0, q).
$$
Then, using~\eqref{esty0}, for 
$$
u=\frac{\log (K/r_0)}{\log q},
$$
we obtain
$$
u = \tu - 1/2 + o(1). 
$$
Now Lemma~\ref{lem:smoothmain3} yields the estimate
\be
\label{eq:PsiKrq}
\Psi(K/r, q) \le  \Psi(K/r_0, q) \ll \frac{K}{r_0} \rho( \tu - 1/2 + o(1)) 
\ll  \frac{K}{r_0}  \tu^{-\tu}.
\ee

Substituting~\eqref{eq:PsiKps} and~\eqref{eq:PsiKrq} in~\eqref{eq:bad}, 
we derive
$$
\Sigma_b(K) \le \frac{V\#\cR}{r_0} \exp(\xi)
$$
where
\begin{eqnarray*}
\xi& = & \tu\(\log\frac{\log \log p}{ \log(\tu+1)} - \log(\tu+1) 
+ O\(\frac{\log \log \log p}{ \log(\tu+1)}\) \)\ \\
& = & \tu\(\log\frac{\log \log p}{ \tu \log(\tu+1)}
+ O\(\frac{\log \log \log p}{ \log(\tu+1)}\) \).
\end{eqnarray*}
Considering the cases $\tu \le (\log \log p)^{1/3}$
and 
$\tu>  (\log \log p)^{1/3}$ separately, we see that 
$$
\tu\frac{\log \log \log p}{ \log(\tu+1)} = 
 O\((\log  \log p)^{1/2} + \tu\) .
$$
Thus 
\be 
\begin{split}
\label{eq: Sb prelim}
\Sigma_b(K) \le V\#\cR &(\log p)^{-1/2+ o(1)}\\
&\exp\(\tu\(\log\frac{\log \log p}{ \tu \log(\tu+1)} + C\)\)
\end{split}
\ee
for some absolute constant $C> 1$. 
Considering the cases $\tu \le U_1$, $U_1 < \tu \le U_2$ and 
$\tu > U_2$ separately,
where 
$$
U_1 =  \frac{\log \log p}{(\log \log \log p)^2}
\mand 
U_2 =   e^{2C} \frac{\log \log p}{ \log \log \log p}, 
$$
we see that 
$$
\tu\(\log\frac{\log \log p}{\tu \log(\tu+1)}
+ C\) \ll \frac{\log \log p \log \log \log \log p}{ \log \log \log p}
= o(\log \log p).
$$
So inserting this bound in~\eqref{eq: Sb prelim} and 
recalling~\eqref{definV} and \eqref{eq:Bound R} we arrive to the estimate 
$$
\Sigma_b(K) \le e^M (\log K)^{2M/\log M} (\log p)^{-0.49+ o(1)}  
$$
Taking 
\be 
\label{eq: M choice}
M = 10^{-2} \log \log p
\ee
and recallinq \eqref{eq:badL} we finally derive 
\be 
\label{eq: cLb}
\# \cL_b \ll  K(\log p)^{-1/3}  .
\ee

To estimate $\# \cL_g$ we observe that for any ``good'' $m$ the number of 
occurencies of $m$ in $\cL$ is estimated by the bound~\eqref{eq:T good k} 
which with $M$ given by~\eqref{eq: M choice} becomes
$$
\# X\((p-1,m),p\) \le (c_0 \tu)^s
$$
where $c_0 = 100 c$.

The multiset $\cL$ contains only elements from the set
$$\fL=\{dl\le K~:~d\mid p-1,\ l\le V\}.$$
Hence,
\be\label{eq: estcLg1}
\# \cL_g\le(c_0 \tu)^s \#\fL.
\ee

We can estimate $ \#\fL$ by Lemma~\ref{bounddivsm} as
\be\label{eq: estL}
 \#\fL \le\Psi(K,q*),
\ee
where $q*$ is the largest prime number with
$$
\prod_{\substack{V<\ell \le q*\\ \ell~\mathrm{prime}}}\ell \le p-1.
$$
By the prime number theorem,
$$
\prod_{\substack{\ell \le q*\\ \ell~\mathrm{prime}}}\ell \le pe^{O(V)}.
$$
Using the pirme number theorem again and \eqref{definV} we get
$$q*\le\(1+o(1)\) \log\(pe^{O(V)}\)=\(1+o(1)\)\log p.$$
By Lemma~\ref{lem:smoothmain3} we have
\begin{eqnarray*}
\Psi(K, q*) & \ll & K \rho(w)\\
& = & K \exp\(-w (\log (w+1) + \log \log (w+2) + O(1))\),
\end{eqnarray*}
where 
$$
w = \tu=\frac{\log K}{\log q*} = \tu(1+o(1/\log\log p))
= \tu(1+o(\tu^{-1/6})).
$$
Therefore,
$$
\Psi(K, q*) \ll K \rho(w) = K \exp\(-\tu (\log (\tu+1) + 
\log \log (\tu+2) + O(1))\).
$$
Combining this inequality with \eqref{eq: estcLg1} and \eqref{eq: estL}
we get
\be 
\label{eq: cLg}
\# \cL_g \ll K\exp\(-\frac{1}{2} \tu \log \log (\tu+2)\).
\ee

We now see from~\eqref{eq:lowewrL}, \eqref{eq: cLb}, and~\eqref{eq: cLg} that
$$\sum_{\substack{k\mid p-1,\\K/V<k\le K}}\frac{T((p-1)/k,p)}k
\ll(\log p)^{-1/3} + \exp\(-\frac{1}{2} \tu \log \log (\tu+2)\).$$
Taking the sum over $K=(\log p)^{\nu/100}$, $100\le\nu\le100(\log\log p)^6$
with $\tu=\nu/100$ and for $K=K_3$ with $\tu=(\log\log p)^6$ we conclude 
the proof. 
\end{proof}

\subsection{Proof of Theorem~\ref{Fupper}}

Theorem~\ref{Fupper} follows immediately from the equation~\eqref{eq: F and S}, 
the asymptotic
formula~\eqref{largek} and Lemmas~\ref{K1K2}, \ref{K2K3} and~\ref{K30}.

\subsection{Lower bound}

To prove~\eqref{eq:LowBound} we recall that for
any $d \mid p-1$ we have
$$
T(d,p) = \frac{1}{d} \varphi\(\frac{p-1}{d}\) + O\(p^{1/2 + o(1)}\),
$$
where $\varphi(k)$ is the Euler function,
see~\cite[Proposition~4.3(a)]{HM2}.
Thus for any $D$, we derive from~\eqref{expd} that
\be
\label{eq:trunc}
F(p) \ge \sum_{\substack{d\mid p-1\\d \le D}}dT(d,p) =
\sum_{\substack{d\mid p-1\\d \le D}} \varphi\(\frac{p-1}{d}\)
+ O\(Dp^{1/2 + o(1)}\).
\ee
Using the trivial bound $\varphi(k)\le k$ we now obtain
\be
\label{eq:approx}
\sum_{\substack{d\mid p-1\\d \le D}} \varphi\(\frac{p-1}{d}\)
= \sum_{d\mid p-1} \varphi\(\frac{p-1}{d}\) +  O\(p^{1 + o(1)}D^{-1}\).
\ee
Also, it is known that
\be
\label{eq:sumphi}
\sum_{d\mid p-1} \varphi\(\frac{p-1}{d}\) = p-1.
\ee
Taking $D = p^{1/4}$, we see that~\eqref{eq:trunc},
\eqref{eq:approx}
and~\eqref{eq:sumphi} imply~\eqref{eq:LowBound}.

\subsection{Estimates for almost all primes}

Take an arbitrary increasing function $g:\,\RR_+\to\RR_+$ such that 
$g(u)\to\infty$ as $u\to\infty$. It is easy to see from the proof of 
Lemma~\ref{K30} that 
$$S(K_3,K_4) = o(1)
$$
where 
$$
K_4= (\log p)^{g(p)/3}.
$$
Combining this with Lemmas~\ref{K1K2} and~\ref{K2K3}, we get
$$
\sum_{\substack{k\mid p-1,\\k> K_4}}\frac{T((p-1)/k,p)}k =1+o(1). 
$$
Therefore, taking 
$$
\widetilde{K}_4 = (\log x)^{g(x)/3}$$  
for a sufficiently large $x$, 
by the arguments from~\cite[Section~5]{BBK} we conclude
that the conjecture~\eqref{eq:HM Conj}
of J.~Holden and P.~Moree~\cite{HM2} 
holds for all but at most
$$
E(x) \le \sum_{k \le \widetilde{K}_4}  \sum_{j \le k} 
\sum_{p \mid k^k - (-j)^k}  1 \ll \widetilde{K}_4^3 = (\log x)^{g(x)}$$  
primes $p \le x$, which substantially improves the bound
$$
E(x) \ll \exp\(12 \frac{\log x}{\log\log x}\).
$$
from~\cite[Section~5]{BKS}.

\section*{Acknowledgement}

The research was carried out while the second author was visiting the
Institute for Advanced Study; the
hospitality and excellent working conditions of this institution 
are gratefully appreciated. 

The research of S.~K. was supported in part
by Grant N.~11-01-00329 from the Russian Fund of Basic Researches
 and that of I.~S. by ARC grant  DP1092835.

%

\end{document}